\numberwithin{equation}{section}
\newtheorem{theorem}{Theorem}[section]
\newtheorem{proposition}[theorem]{Proposition}
\newtheorem{lemma}[theorem]{Lemma}
\newtheorem{corollary}[theorem]{Corollary}
\newtheorem{Theorem}{Theorem}[section]
\theoremstyle{definition}
\def \O{\Omega_{\varepsilon}}
\def\pt{\partial}
\def\B{\Bbb R^{n}}
\def\e{\varepsilon}
\def\cal{\mathcal}
\def\l{\langle}
\def\r{\rangle}
\begin{document}
\title{On Lin-Ni's conjecture in dimensions four and six}

\author{  Juncheng Wei }
\address{ Juncheng ~Wei,~Department of Mathematics, University of British Columbia, Vancouver, BC V6T 1Z2 }
\email{jcwei@math.ubc.ca}
\author{  Bin Xu}
\address{ Bin ~Xu,~School of Mathematics and  Statistics, Jiangsu ~Normal University, Xuzhou, Jiangsu, 221116,~~~~ People Republic of China}
\email{dream-010@163.com}
\author{  Wen Yang }
\address{ Wen ~Yang,~Department of Mathematics,
Chinese University of Hong Kong, Shatin, Hong Kong  }
\email{math.yangwen@gmail.com}

\begin{abstract}
We give negative answers to Lin-Ni's conjecture for any four and six dimensional domains. No condition on the symmetry, geometry nor topology of the domain is needed.
\end{abstract}

\date{}\maketitle

\section{Introduction}\par
This paper is concerned  with the following nonlinear elliptic Neumann  problem:
\begin{equation}
\label{1.1}
\left\{\begin{array}{ll}
\Delta u-\mu u+u^{q}=0,~~~u>0~~& {\rm in}~\Omega,\\
\frac {\partial u}{\partial \nu}=0,~&{\rm on}~\partial \Omega,
\end{array}\right.
\end{equation}
where $1<q<+\infty,~\mu>0$~and ~$\Omega$~is a smooth and bounded domain in $\mathbb{R}^n$ ($n\geq 2$).

\medskip

Equation (\ref{1.1}) arises in many branches of the applied sciences. For example, it can be viewed as a steady-state equation for the shadow system of the Gierer-Meinhardt system in mathematical biology \cite{gm}, \cite{ni},
or for parabolic equation in chemotaxis, e.g. Keller-Segel model \cite{lnt}. When $q=\frac{n+2}{n-2}$, it can also be viewed as a Brezis-Nirenberg type Neumann problem \cite{BN}.

Equation (\ref{1.1}) has  at least one solution, namely the constant solution $u\equiv \mu^{\frac {1}{q-1}}.$  It turns out that this is the only solution, provided that $\mu $ is small and $q<\frac{n+2}{n-2}$. This was first proved by Lin-Ni-Takagi \cite{lnt}, via blow up analysis and compactness argument. Based on this, Lin and Ni \cite{ln}  made the following conjecture:

\medskip

\noindent
{\em {\bf Lin-Ni's Conjecture \cite{ln}:} For $\mu$  small and $q = \frac{n+2}{n-2}$,  problem (\ref{1.1}) admits only the constant solution.}

\medskip

In recent years, many progress have been made towards the understanding of Lin-Ni's conjecture.

The first result was due to Adimurthi-Yadava \cite{ay1}-\cite{ay2} (and independently  Budd-Knapp-Peletier \cite{bkp}). They  considered radial solutions of the
following problem
\begin{equation}
\label{1.2}
\left\{\begin{array}{ll}
\Delta u-\mu u+u^{\frac{n+2}{n-2}}=0\quad&\mathrm{in}~B_R(0),\\
u= u(|x|),\ \ u>0~&\mathrm{in}~B_R(0),\\
\frac{\partial u}{\partial \nu}=0,\quad&\mathrm{on}~\partial B_R(0)
\end{array}\right.
\end{equation}
and the following results were proved

\begin{Theorem}
\label{th1.1}
\rm{(}\cite{ay1}-\cite{ay3},\cite{bkp}\rm{)}~For $\mu$~sufficiently small\par
{\rm (1)}~if $n=3$ or $n \geq 7$,~problem (\ref{1.2}) admits only the constant solution;\par
{\rm (2)}~if~ $n=4,5, 6$,~problem (\ref{1.2}) admits a nonconstant solution.
\end{Theorem}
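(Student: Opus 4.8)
\medskip
\noindent\emph{Sketch of a proof.}\quad Since (\ref{1.2}) is posed on a ball and only radial functions compete, the plan is to pass to the two-point boundary value problem
\begin{equation*}
u''+\tfrac{n-1}{r}\,u'-\mu u+u^{p}=0\ \ \text{on }(0,R),\qquad u'(0)=u'(R)=0,\quad u>0,\qquad p:=\tfrac{n+2}{n-2},
\end{equation*}
and to attack it by shooting. For $\alpha>0$ let $u_\alpha$ solve this equation with $u_\alpha(0)=\alpha$, $u_\alpha'(0)=0$, and let $T(\alpha)\in(0,+\infty]$ be the first positive zero of $u_\alpha'$. The constant solution is $u\equiv\alpha_0:=\mu^{(n-2)/4}$, and a nonconstant radial solution on $B_R(0)$ exists if and only if $T(\alpha)=R$ (with $u_\alpha>0$ on $[0,R]$) for some $\alpha\neq\alpha_0$. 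So part (2) asks that the shooting function $\alpha\mapsto T(\alpha)$ attain the value $R$ away from $\alpha_0$, while part (1) asks (for $\mu$ small) that it never do so; equivalently, since everything rescales, on $B_\rho(0)$ with $\mu=1$ one asks whether $\inf_\alpha T(\alpha)=0$ or is bounded below.

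\smallskip
I would describe $T$ through its two limiting regimes. Near $\alpha_0$, linearising gives $v''+\tfrac{n-1}{r}v'+\tfrac{4\mu}{n-2}v=0$, whose solution bounded at the origin is a rescaled Bessel function; its derivative first vanishes at $r=c_n\mu^{-1/2}$ for an explicit Bessel zero $c_n>0$, so $T(\alpha)\to c_n\mu^{-1/2}$ as $\alpha\downarrow\alpha_0$, which is $\gg R$ when $\mu$ is small. For $\alpha$ large one rescales $u_\alpha(r)=\alpha\,\phi_\alpha(\alpha^{2/(n-2)}r)$, so that $\phi_\alpha$ solves $\phi''+\tfrac{n-1}{s}\phi'-\delta\,\phi+\phi^{p}=0$, $\phi_\alpha(0)=1$, $\phi_\alpha'(0)=0$, with $\delta:=\mu\alpha^{-4/(n-2)}\to0$; in the limit this is the Yamabe equation with the bubble $U(s)=(1+\tfrac{s^2}{n(n-2)})^{-(n-2)/2}$, which is strictly decreasing, so any first zero of $\phi_\alpha'$ is created by the $O(\delta)$ perturbation. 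Writing $\phi_\alpha=U+\delta\,\psi+o(\delta)$ with $\psi$ solving the radial linearised equation $\Delta_s\psi+pU^{p-1}\psi=U$, the first zero $s_\alpha$ of $\phi_\alpha'$ — when it exists — is located from $U'(s_\alpha)+\delta\,\psi'(s_\alpha)+\dots=0$, and its existence and position are controlled by the decay of $\psi$ at infinity, hence by the convergence and sign of integrals such as $\int_0^\infty U\,Z\,s^{n-1}\,ds$ ($Z=\tfrac{\partial}{\partial\lambda}U_\lambda$ the radial dilation Jacobi field). This is precisely where the dimension enters: conceptually, the perturbation is the signature of the $\mu u$ term, whose contribution to the reduced energy scales like the bubble $L^2$-mass, namely $\lambda$, $\lambda^2\log\tfrac1\lambda$, $\lambda^2$ for $n=3$, $n=4$, $n\geq5$, competing against a Neumann boundary-reflection term of size $\lambda^{\,n-2}$; the comparison of these two scales, and the sign of the net constant, is what singles out $n\in\{4,5,6\}$.

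\smallskip
Granting the (delicate) outcome of that comparison: for $n=4,5,6$ the analysis shows that, together with continuity of $T$ on $(\alpha_0,+\infty)$, the range of $T$ contains $R$ for every small $\mu$, so the intermediate value theorem yields some $\alpha\neq\alpha_0$ with $T(\alpha)=R$; one checks $u_\alpha>0$ on $[0,R]$, giving the nonconstant solution. For $n=3$ and $n\geq7$ one instead proves $T(\alpha)>R$ for \emph{every} $\alpha>0$ once $\mu$ is small — which cannot follow from the two endpoint limits alone and requires a quantitative, global handling of the shooting flow: a Pohozaev/energy identity for $u_\alpha$ on $(0,T(\alpha))$, together with monotonicity and comparison estimates, should force $T(\alpha)\geq e_n\mu^{-1/2}$ uniformly in $\alpha$ for some $e_n>0$, so that for $\mu$ small the only admissible shooting datum is $\alpha_0$ and the constant is the unique radial solution. (Equivalently, after rescaling, a Pohozaev identity on the unit ball rules out a nonconstant limiting profile, so a nonconstant solution would have to concentrate, and in dimensions $3$ and $\geq7$ the energy balance forbids concentration.)

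\smallskip
The main obstacle is the uniformity of the bubble-regime analysis: one must track the perturbation $\phi_\alpha-U$ to sufficiently high order over the long interval $[0,s_\alpha]$ with $s_\alpha\to\infty$, with errors uniform in $\alpha$, and match it to the linearised (Bessel) description valid away from the concentration scale; in the nonexistence case one must in addition exclude that $T(\alpha)$ dips below $R$ somewhere in the intermediate range of $\alpha$, a genuinely global feature of the shooting flow rather than a consequence of the two limits. Computing the constants $c_n$, $e_n$ and — above all — the sign of the integral governing the bubble perturbation precisely enough to read off the dichotomy $\{4,5,6\}$ versus $\{3\}\cup\{n\geq7\}$, with the borderline cases $n=4$ (logarithm) and $n=6$ ($p=2$) handled separately, is the crux of the argument.
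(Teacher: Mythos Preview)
This theorem is not proved in the paper: it is quoted as a known result from the cited works \cite{ay1}--\cite{ay3} and \cite{bkp}, and the paper explicitly remarks that its proof ``depends on the radial symmetry of the domain and the solution.'' There is therefore no proof in the paper to compare your proposal against.

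That said, your sketch is a faithful outline of the strategy actually used in those references. Adimurthi--Yadava and Budd--Knapp--Peletier do reduce to the radial ODE and analyse it via the shooting parameter $\alpha=u(0)$; the large-$\alpha$ regime is indeed governed by the Yamabe bubble and the dimensional split arises from the sign and size of the integrals you describe. Your identification of the difficulties --- the matching of the bubble expansion to the Bessel description, the need for a global (Pohozaev-type) argument rather than endpoint limits alone in the uniqueness part, and the borderline cases $n=4$ (logarithm) and $n=6$ ($p=2$) --- is accurate. What your sketch does not supply is the actual computation of the decisive sign, which in the original papers is a concrete calculation rather than a heuristic balance of scales; filling that in is the substantive work, and without it the argument remains a plausible plan rather than a proof.
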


Theorem A reviews the dimension effects on Lin-Ni's conjecture. However the proof of Theorem \ref{th1.1} depends on the radial symmetry of the domain and the solution and thus is difficult to generalize to general domains.  In the general domain case, the complete answer is not known yet, but there are a few results. In the general three dimensional domain case, Zhu \cite{z} and Wei-Xu \cite{wx} proved

\begin{Theorem}
\label{th1.2}
\rm{(}\cite{wx},\cite{z}\rm{)}~The conjecture is true if $n=3~(q=5)$~and $\Omega$~is convex.
\end{Theorem}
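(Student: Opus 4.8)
The plan is to argue by contradiction through a blow-up analysis, the real content being an a priori $L^{\infty}$-bound: for $\mu$ small, every positive solution of (\ref{1.1}) on the fixed convex domain $\Omega\subset\mathbb{R}^3$ with $q=5$ is uniformly bounded. Suppose the theorem fails, so there are $\mu_k\downarrow 0$ and nonconstant positive solutions $u_k$. I would first dispose of the non-blow-up alternatives. If $\|u_k\|_{L^\infty(\Omega)}$ stays bounded, uniform Schauder estimates for the Neumann problem give, along a subsequence, $u_k\to u_0$ in $C^2(\overline\Omega)$ with $u_0\ge 0$, $\Delta u_0+u_0^5=0$ in $\Omega$, $\partial_\nu u_0=0$; integrating forces $u_0\equiv 0$, hence $\|u_k\|_{L^\infty}\to 0$. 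In that regime write $u_k=\bar u_k+\phi_k$ with $\bar u_k$ the mean; the equation for the oscillation $\phi_k$ reads $\Delta\phi_k+V_k\phi_k=(\text{const})+O(\|\phi_k\|^2)$ with $V_k=O(\mu_k)$, and since the first nonzero Neumann eigenvalue $\lambda_1(\Omega)>0$ is fixed, pairing with $\phi_k$ forces $\phi_k\equiv 0$ once $k$ is large, contradicting nonconstancy. Hence $L_k:=\max_{\overline\Omega}u_k=u_k(P_k)\to\infty$, with $P_k\to P_0\in\overline\Omega$.

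Next comes the description of the blow-up. Rescale $v_k(y):=L_k^{-1}u_k(P_k+L_k^{-2}y)$, which solves $\Delta v_k-\mu_k L_k^{-4}v_k+v_k^5=0$, $v_k(0)=1=\max v_k$, with $\mu_k L_k^{-4}\to 0$. If $L_k^2\,\mathrm{dist}(P_k,\partial\Omega)\to\infty$ the rescaled domains exhaust $\mathbb{R}^3$, and by the Caffarelli-Gidas-Spruck classification $v_k\to U$ in $C^2_{\mathrm{loc}}$ with $U(y)=(1+|y|^2/3)^{-1/2}$; if $L_k^2\,\mathrm{dist}(P_k,\partial\Omega)$ stays bounded, the limit is such a bubble on a half-space carrying a Neumann condition on the bounding hyperplane, which by the moving-plane method is centered on that hyperplane, so the blow-up is ``orthogonal'' to $\partial\Omega$. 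A standard selection and iteration scheme then shows that $u_k$ has finitely many isolated simple blow-up points, that near each it is modeled on a single bubble, and that $u_k\to 0$ in $C^0_{\mathrm{loc}}(\overline\Omega\setminus S)$ where $S$ is the blow-up set; since $n=3$ one also records the refined ``neck'' expansion $u_k(x)\approx L_k^{-1}\bigl(a_0|x-P_k|^{-1}+h_k(x)\bigr)$ with $h_k$ essentially harmonic.

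The decisive ingredient is a local Pohozaev identity. Put the origin at $P_k$ and integrate the equation against $x\cdot\nabla u_k$ and against $u_k$ over $\Omega\cap B_\sigma(P_k)$. Because $q=5=\frac{n+2}{n-2}$ is critical, the bulk term $\int u_k^6$ cancels, leaving $\mu_k\int_{\Omega\cap B_\sigma}u_k^2$ against a flux over $\Omega\cap\partial B_\sigma$ and, when $P_0\in\partial\Omega$, a boundary contribution over $\partial\Omega\cap B_\sigma$ of the schematic form $\tfrac12\int(x\cdot\nu)|\nabla_T u_k|^2+\tfrac16\int(x\cdot\nu)u_k^6-\tfrac{\mu_k}2\int(x\cdot\nu)u_k^2$ (the normal-derivative flux on $\partial\Omega$ vanishes by the Neumann condition). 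Convexity enters decisively here: for $x\in\partial\Omega$ one has $(x-P_k)\cdot\nu(x)\ge 0$, and near $P_k$, $(x-P_k)\cdot\nu\approx\tfrac12\sum_i\kappa_i(x_i')^2$ with the principal curvatures $\kappa_i\ge 0$. Feeding in the bubble profile, the $|y|^{-1}$-decay special to $n=3$ makes $\tfrac12\int_{\partial\Omega\cap B_\sigma}(x\cdot\nu)|\nabla_T u_k|^2$ of size $\asymp H(P_0)\,L_k^{-2}\log L_k$, strictly larger than every other surviving term (all $O(L_k^{-2})+O(\mu_kL_k^{-2})$), so the identity cannot hold if $H(P_0)>0$; a finer use of the same identity at points of vanishing mean curvature and at interior blow-up points — together with the facts that a closed convex surface in $\mathbb{R}^3$ cannot be everywhere minimal and that the relevant regular (Robin-type) part of the Neumann Green's function has a definite sign for $\mu$ small — excludes blow-up there as well. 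Thus no blow-up occurs, so $\|u\|_{L^\infty}\le C(\Omega)$ for $\mu$ small; then, as in the first paragraph, any sequence of solutions with $\mu_k\to0$ is precompact and tends to $0$, and the linearization argument forces $u\equiv\mu^{1/4}$, proving the theorem.

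I expect the main obstacle to be this last step, i.e. making the local Pohozaev analysis rigorous. Two points are delicate: (a) pinning down the next-order term of the bubble on the neck and controlling $u_k-(\text{bubble})$ sharply enough that the convexity-generated $L_k^{-2}\log L_k$ term genuinely dominates; and (b) handling blow-up at (or accumulating near) points of small mean curvature, the possibility of several blow-up points, and of $S$ touching $\partial\Omega$ so that the half-space model degenerates. It is exactly at (b) that the \emph{global} convexity of $\Omega$ — not merely a local curvature sign — is used in an essential way, and it is this feature whose analogue breaks down in dimensions four, five and six, in accordance with Theorem A.
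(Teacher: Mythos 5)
This statement is cited in the paper as a known result of Zhu and of Wei--Xu; the paper does not prove it, but it does characterize the two available proofs: \emph{``Zhu's proof relies on blowing up analysis and a priori estimates, while Wei--Xu gave a direct proof of Theorem~\ref{th1.2} by using only integration by parts.''} Your proposal reconstructs the Zhu route in broad strokes --- contradiction, compactness, CGS blow-up classification, local Pohozaev identity with the convexity sign $(x-P_k)\cdot\nu\ge 0$ producing a dominant $L_k^{-2}\log L_k$ boundary term in $n=3$, plus the separate exclusion of interior and flat-boundary blow-up. That is a legitimate strategy and you have correctly identified both where convexity enters (sign of $(x-P_k)\cdot\nu$ and the global topological fact that a closed convex surface in $\mathbb{R}^3$ cannot have vanishing mean curvature everywhere) and where the argument is fragile (sharp neck expansion, multiple/accumulating blow-up points, boundary versus interior concentration). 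However, you should be aware that the paper explicitly singles out a \emph{much shorter} alternative: Wei--Xu's proof bypasses blow-up analysis entirely and works by testing the equation against well-chosen functions and integrating by parts, exploiting $q=5$, $n=3$ and convexity directly to derive an integral inequality forcing $u\equiv\mu^{1/4}$ once $\mu$ is small. The trade-off is clear: your blow-up approach is more robust (it is the template that generalizes to the mean-convex $n=3,\,n\ge7$ result of Druet--Robert--Wei), whereas the integration-by-parts approach buys brevity and avoids precisely the two delicate points (a) and (b) that you flag at the end; but it is special to $n=3$ and does not obviously extend. Finally, your passing claim that the Robin part of the Green's function has a definite sign for $\mu$ small is not something you can take for granted in a general convex domain and would need to be argued (or replaced); in Zhu's actual treatment the interior exclusion is handled through the Pohozaev balance and the positive-mass-type structure of the regular part, which for the Neumann problem is domain dependent.
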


Zhu's proof relies on blowing up analysis and a priori estimates,  while Wei-Xu \cite{wx} gave a direct proof of Theorem \ref{th1.2} by using only integration by parts.

\medskip

Part (1) of Theorem A is generalized  by Druet-Robert-Wei \cite{drw} to mean convex domains with bounded energy.

\begin{Theorem}
\label{th1.3}
\rm{(}\cite{drw}\rm{)}~Let $\Omega$ be a smooth bounded domain of $\mathbb{R}^n$, $n=3$ or $n\geq7.$ Assume that $H(x)>0$ for all $x\in\partial\Omega,$ where $H(x)$ is the mean curvature of $\partial\Omega$ at $x\in\partial\Omega.$ Then for all $\mu>0,$ there exists $\mu_0(\Omega,\Lambda)>0$ such that for all $\mu\in(0,\mu_0(\Omega,\Lambda))$ and for any $u\in C^2(\overline{\Omega}),$ we have that
\begin{equation*}
\left\{\begin{array}{ll}
\Delta u+\mu u=u^{2^*-1}\quad&\mathrm{in}~\Omega\\
u>0&\mathrm{in}~\Omega\\
\frac{\partial u}{\partial \nu}=0&\mathrm{on}~\partial\Omega\\
\int_{\Omega}u^{2^*}dx\leq\Lambda&
\end{array}\right\}\Rightarrow u\equiv\mu^{\frac{n-2}{4}}.
\end{equation*}
\end{Theorem}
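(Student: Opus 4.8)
\medskip\noindent\textbf{Sketch of a proof of Theorem~\ref{th1.3}.}
The plan is to argue by contradiction, combining a blow-up (bubbling) analysis with a Pohozaev identity localized near the concentration points; the hypothesis $H>0$ enters through the sign of a mean-curvature term in that identity, and the restriction to $n=3$ or $n\ge7$ is exactly what lets this term win. Suppose the conclusion fails: there are $\mu_k\downarrow0$ and nonconstant $u_k\in C^2(\overline{\Omega})$ solving the Neumann problem with $\int_{\Omega}u_k^{2^*}\,dx\le\Lambda$. A first, essentially standard step (in the spirit of Lin--Ni--Takagi \cite{lnt}) is to show that genuine concentration occurs, $\|u_k\|_{L^{\infty}(\Omega)}\to+\infty$: otherwise elliptic estimates and $\mu_k\to0$ would force $u_k\to u_0$ in $C^2(\overline{\Omega})$ with $u_0\ge0$, $\Delta u_0+u_0^{2^*-1}=0$ and $\partial_\nu u_0=0$; pairing with $u_0$ gives $u_0\equiv0$, and then a linearization argument (the first nonzero Neumann eigenvalue staying bounded away from $\mu_k$) forces $u_k$ to be constant for large $k$, a contradiction.

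\medskip
Once $\|u_k\|_{\infty}\to+\infty$, I would run the bubbling analysis in the spirit of Struwe, Schoen and Druet--Hebey--Robert. Since $\mu_k\to0$, rescaling around each concentration point produces in the limit a solution of $\Delta U+U^{2^*-1}=0$; by the Caffarelli--Gidas--Spruck classification this is a standard Aubin--Talenti bubble $U_{\delta,\xi}(y)=[n(n-2)]^{\frac{n-2}{4}}\big(\delta/(\delta^2+|y-\xi|^2)\big)^{\frac{n-2}{2}}$ on $\mathbb{R}^n$ at an interior point, and --- at a boundary point --- a solution on a half-space with homogeneous Neumann datum, which after even reflection across the boundary hyperplane is again a standard bubble centered on $\partial\mathbb{R}^n_+$. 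The bound $\int_{\Omega}u_k^{2^*}\,dx\le\Lambda$ limits the number of bubbles, and a $C^0$-theory (sharp pointwise upper bounds for $u_k$ near each concentration point, in terms of the bubbles, their scales $\delta_k^{(i)}$ and the vanishing background size $\mu_k^{(n-2)/4}$) then fixes the local structure of $u_k$. Because $\mu_k\to0$ the weak limit of $u_k$ vanishes, so there is no positive-mass mechanism stabilizing an interior bubble.

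\medskip
The decisive step is the localized Pohozaev identity. Fix a concentration point, placed after translation at $0\in\partial\Omega$ (or in $\Omega$), multiply the equation by $x\cdot\nabla u_k+\tfrac{n-2}{2}u_k$, and integrate over $\Omega_{\rho}:=\Omega\cap B_{\rho}(0)$ for small fixed $\rho$. The Neumann condition kills the contribution of $\partial\Omega\cap B_{\rho}$ except through the curvature of $\partial\Omega$, which to leading order yields a term of order $H(0)\,\delta_k$ of a definite sign; the linear term contributes the bulk integral $\mu_k\int_{\Omega_{\rho}}u_k^2\,dx>0$, and the flux over $\Omega\cap\partial B_{\rho}$ is a lower-order remainder provided the bubble approximation is accurate enough. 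Evaluating each term on the bubble profile, the mean-curvature term turns out to dominate all the others precisely in dimensions $n=3$ and $n\ge7$, where the Pohozaev identity then becomes incompatible with $H(0)>0$ --- the contradiction that proves the theorem. In the remaining dimensions $n=4,5,6$ the corrections to the pure-bubble approximation (and the possible bubble-tower or boundary-cluster configurations) enter at the same order and the identity cannot be closed this way; this is exactly why the statement, like Theorem~A, excludes $n=4,5,6$, and in fact --- as the present paper shows for $n=4$ and $n=6$ --- nonconstant solutions do exist there. The same identity applied at a putative interior concentration point already contradicts the sign of the $\mu_k$-term, so all concentration takes place on $\partial\Omega$.

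\medskip
The main obstacle is the blow-up analysis itself. One needs pointwise --- not merely energy --- control of $u_k$ near the concentration points, a $C^0$-theory in the spirit of Druet--Hebey--Robert adapted to the Neumann condition and to the vanishing background $\mu_k^{(n-2)/4}$, precise enough to evaluate every term of the Pohozaev identity down to the order at which the mean-curvature term appears; one must in addition exclude interior concentration, bubble towers and boundary clusters. Controlling the remainder $u_k-\sum_i U_{\delta_k^{(i)},x_k^{(i)}}$ to an accuracy finer than the mean-curvature term is precisely the low-dimensional difficulty that confines this scheme to $n=3$ and $n\ge7$.
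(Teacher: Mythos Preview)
The paper does not prove Theorem~\ref{th1.3}; it is quoted from \cite{drw} as background, and no proof (not even a sketch) is supplied here. So there is nothing in the present paper to compare your argument against.

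That said, your outline is broadly in the spirit of \cite{drw}: contradiction, extraction of bubbles under the energy bound, $C^0$-theory in the style of \cite{dhr}, and a localized Pohozaev identity at a boundary concentration point where the mean curvature $H(x_0)$ appears with a definite sign. A couple of points are worth sharpening. First, the reason the argument closes only for $n=3$ or $n\ge7$ is not merely that ``corrections enter at the same order'' in $n=4,5,6$; it is a precise comparison of the scale of the mean-curvature term against the scale of the $\mu_k\int u_k^2$ term and the interaction terms, and in \cite{drw} this requires a careful hierarchy of the bubble scales $\delta_k^{(i)}$ versus $\mu_k$. Second, your sentence about interior concentration (``already contradicts the sign of the $\mu_k$-term'') is too quick: at an interior point the boundary/curvature contribution is absent, and ruling out interior bubbles in \cite{drw} uses instead the interaction with the other bubbles and the Green-type expansion, not simply the sign of $\mu_k$. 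Finally, the actual memoir \cite{drw} is over a hundred pages because the $C^0$-theory under Neumann boundary conditions, the exclusion of bubble towers, and the control of mutual interactions are each substantial; your sketch correctly identifies them as the main obstacles but underplays how much of the work lies there.
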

\vspace{0.3cm}

It should be mentioned that the assumption of  bounded energy in Theorem C is necessary. Without this technical assumption, it was proved that solutions to (\ref{1.1}) may accumulate with infinite energy when the mean curvature is negative somewhere (see Wang-Wei-Yan \cite{wwy}). More precisely, Wang-Wei-Yan gave a negative answer to Lin-Ni's conjecture in all dimensions ($n\geq3$) for non-convex domain by assuming that $\Omega$ is a smooth and bounded domain satisfying the following conditions:\\

\noindent ($H_1$)~$y\in\Omega$ if and only if $(y_1,y_2,y_3,\cdots,-y_i,\cdots,y_n)\in\Omega,~\forall i=3,\cdots,n.$ \par
\noindent($H_2$)~If $(r,0,y'')\in\Omega$, then $(r\cos\theta,r\sin\theta,y'')\in\Omega,~\forall\theta\in(0,2\pi),$ where $y''=(y_3,\cdots,y_n).$\par
\noindent($H_3$)~Let $T:=\partial\Omega\cap\{y_3=\cdots=y_n=0\}$. There exists a connected component $\Gamma$ of $T$ such that $H(x)\equiv\gamma<0,~\forall x\in\Gamma.$

\begin{Theorem}
\label{th1.4}
\rm{(}\cite{wwy}\rm{)}~Suppose $n\geq3,~q=\frac{n+2}{n-2}$ and $\Omega$ is a bounded smooth domain satisfying ($H_1$)-($H_3$). Let $\mu$ be any fixed positive number. Then problem (\ref{1.1}) has infinitely many positive solutions, whose energy can be made arbitrarily large.
\end{Theorem}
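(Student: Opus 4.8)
\smallskip
\noindent\emph{A proof of Theorem \ref{th1.4} can be organized as follows.} The strategy is a Lyapunov--Schmidt finite-dimensional reduction producing, for each large integer $k$, a solution of \eqref{1.1} that blows up like $k$ standard bubbles placed at the vertices of a regular $k$-gon on the curve $\Gamma$. Let $U_{\lambda,\xi}(y)=\big(n(n-2)\big)^{\frac{n-2}{4}}\big(\lambda/(\lambda^{2}+|y-\xi|^{2})\big)^{\frac{n-2}{2}}$ denote the Aubin--Talenti bubble, which solves $\Delta U+U^{\frac{n+2}{n-2}}=0$ on $\mathbb{R}^{n}$, and let $PU_{\lambda,\xi}$ be its projection determined by $\partial_{\nu}PU_{\lambda,\xi}=0$ on $\partial\Omega$. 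Fix a point $p\in\Gamma$, let $\xi_{j}$ be the image of $p$ under the rotation by $2\pi j/k$ in the $(y_{1},y_{2})$-plane (so all $\xi_{j}$ lie in the slice $\{y''=0\}\cap\partial\Omega$), and use $W_{\lambda}=\sum_{j=1}^{k}PU_{\lambda,\xi_{j}}$ as approximate solution, with a common small scale $\lambda>0$. By ($H_1$) and ($H_2$) one may work inside the subspace $H_{s}\subset H^{1}(\Omega)$ of functions even in each $y_{i}$, $i\ge3$, and invariant under the rotation by $2\pi/k$; restricting to $H_{s}$ kills all the approximate-kernel directions of the $k$ bubbles coming from rotations, from translations in the $y_{i}$ ($i\ge3$), and from translations tangent to $\Gamma$, leaving a reduced problem that is finite-dimensional in $\lambda$ and at most one further parameter (the distance of the layer to $\partial\Omega$), with the integer $k$ as the main parameter.

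The reduction is the usual three-step scheme. \emph{(i)} The operator obtained by linearizing $\Delta u-\mu u+u_{+}^{q}$ at $W_{\lambda}$, restricted to the part of $H_{s}$ orthogonal to the retained approximate-kernel directions, is invertible with the norm of its inverse bounded by a fixed power of $k$, uniformly in the relevant range of parameters; this follows from the standard blow-up/contradiction argument, using nondegeneracy of the single bubble together with the vanishing of all spurious kernel elements inside $H_{s}$. \emph{(ii)} A contraction-mapping argument yields $\phi=\phi_{k,\lambda}\in H_{s}$, orthogonal to the approximate kernel, so that $W_{\lambda}+\phi$ solves \eqref{1.1} modulo that kernel, with $\|\phi\|$ controlled by the size of the error $\Delta W_{\lambda}-\mu W_{\lambda}+W_{\lambda}^{q}$, which is a small combination of the $\mu$-term, the mutual bubble interactions and the boundary-curvature correction. \emph{(iii)} Then $W_{\lambda}+\phi$ is a genuine solution exactly when $\lambda$ (after an elementary further reduction in the auxiliary parameter) is a critical point of the reduced energy $J_{k}(\lambda):=J_{\mu}(W_{\lambda}+\phi)$, where $J_{\mu}(u)=\tfrac12\int_{\Omega}(|\nabla u|^{2}+\mu u^{2})-\tfrac1{q+1}\int_{\Omega}u_{+}^{q+1}$.

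The heart of the matter is the expansion of $J_{k}$ and the detection of its critical point. One obtains
\[
J_{k}(\lambda)=k\Big[\tfrac1{2n}S^{n/2}-c_{1}\gamma\,\lambda+c_{2}\,\mu\,\lambda^{2}\,\Phi_{n}(\lambda)-c_{3}\Big(\tfrac{k}{\bar r_{0}}\Big)^{n-2}\lambda^{n-2}\Big]+(\text{lower order}),
\]
with positive constants $c_{1},c_{2},c_{3}$, where $S$ is the best Sobolev constant, $\gamma<0$ is the mean curvature along $\Gamma$ (by ($H_3$), hence $-c_{1}\gamma\lambda=c_{1}|\gamma|\lambda>0$), $\bar r_{0}$ is the radius of $\Gamma$, the factor $(k/\bar r_{0})^{n-2}\lambda^{n-2}$ records the attractive interaction of neighbouring bubbles at mutual distance $\sim\bar r_{0}/k$, and $\Phi_{n}\equiv1$ for $n\ge5$ while $\Phi_{4}(\lambda)=\log\tfrac1\lambda$ (for $n=3$ the $\mu$-term has the same order $\lambda$ as the curvature term and one more term must be retained). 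For fixed large $k$ the bracket increases in $\lambda$ near $0$ and eventually decreases, so it has a nondegenerate maximum; balancing the curvature term against the interaction term gives the blow-up rate $\lambda_{k}\sim k^{-(n-2)/(n-3)}$ (thus $\lambda_{k}\sim k^{-2}$ for $n=4$ and $\lambda_{k}\sim k^{-4/3}$ for $n=6$), and at this scale the $\mu$-term is of strictly lower order, which is exactly why the construction works for \emph{every} $\mu>0$. Restoring the auxiliary parameter and the $O(\|\phi\|)$ corrections, a degree (or min-max) argument produces, for all large $k$, a genuine critical point and hence a positive solution $u_{k}=W_{\lambda_{k}}+\phi_{k}$ (positivity from standard maximum-principle estimates). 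Since $J_{\mu}(u_{k})=\tfrac{k}{2n}S^{n/2}+o(k)\to\infty$, the $u_{k}$ are mutually distinct with unbounded energy, which is the claim.

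The main difficulty is concentrated in step (iii) and is twofold. First, \emph{uniformity in $k$}: each coefficient above scales with $k$ and the interaction carries additional powers of $k$, so the linear estimate of (i), the bound on $\|\phi\|$ of (ii), and the remainder of the energy expansion must all be tracked explicitly in $k$ and shown to be genuinely negligible compared with the $O(k\lambda_{k})$ gap that isolates the critical point. Second, the \emph{dimension-dependent shape of the $\mu$-term} (the logarithm at $n=4$, the loss of order at $n=3$) forces separate bookkeeping in each dimension, and one must check that the $\mu$-contribution, though lower order, cannot combine with the error terms to destroy the nondegeneracy of the reduced critical point---which is precisely what makes the negative answer to Lin--Ni's conjecture uniform in $\mu$ under the hypotheses ($H_1$)--($H_3$).
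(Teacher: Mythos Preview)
The paper does not contain a proof of Theorem~\ref{th1.4}; this theorem is quoted from \cite{wwy} as background and is not re-proved here, so there is no ``paper's own proof'' against which to compare your sketch. What you have written is, in outline, the construction carried out in \cite{wwy}: boundary bubbles at the vertices of a regular $k$-gon on the component $\Gamma$ of negative mean curvature, symmetry reduction via $(H_1)$--$(H_2)$ to a one- or two-parameter reduced problem, and an energy expansion in which the positive curvature term $c_1|\gamma|\lambda$ balances against the negative bubble-interaction term to produce a critical scale $\lambda_k\sim k^{-(n-2)/(n-3)}$ for $n\ge4$, with the $\mu$-term entering at lower order.

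As a sketch this is faithful to \cite{wwy}, but two points deserve flagging. First, your formula $\lambda_k\sim k^{-(n-2)/(n-3)}$ is vacuous at $n=3$; your parenthetical remark acknowledges this, but in \cite{wwy} the three-dimensional case genuinely requires a different balance (the curvature and interaction terms are both linear in $\lambda$, and one must look at the next-order logarithmic corrections), so ``one more term must be retained'' understates what is needed. Second, the uniformity-in-$k$ issue you identify in your final paragraph is indeed the technical core of \cite{wwy}; your sketch asserts that the linear estimate has ``norm of its inverse bounded by a fixed power of $k$'' and that the remainders are ``genuinely negligible'', but establishing these bounds with explicit $k$-dependence (in weighted norms adapted to the $k$-bubble configuration) is the bulk of the work and cannot be waved through. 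None of this is a gap in your understanding of the strategy, only a reminder that the sketch is far from a proof.
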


Wang-Wei-Yan \cite{wwy2} also gave a negative answer to Lin-Ni's conjecture in some convex domain including the balls for $n\geq 4.$

\begin{Theorem}
\label{th1.5}\rm{(}\cite{wwy2}\rm{)}~Suppose $n\geq4,~q=\frac{n+2}{n-2}$ and $\Omega$ satisfies ($H_1$)-($H_2$). Let $\mu$ be a any fixed positive number. Then problem (\ref{1.1}) has infinitely many positive solutions, whose energy can be made arbitrarily large.
\end{Theorem}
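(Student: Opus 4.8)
\emph{Proof proposal.} I would establish Theorem~\ref{th1.5} by a finite-dimensional Lyapunov--Schmidt reduction, modelling the solutions on a large number $k$ of standard bubbles arranged symmetrically around a circle and letting $k\to\infty$; the parameter $k$ simultaneously produces the infinitude of solutions and the unbounded energy, while $(H_1)$--$(H_2)$ (and \emph{not} any convexity) are used only to cut the problem down to a symmetric subspace in which the bubble configuration is rigid. Let $2^*=\frac{2n}{n-2}$, let $U(y)=(n(n-2))^{\frac{n-2}{4}}(1+|y|^2)^{-\frac{n-2}{2}}$ be the Aubin--Talenti bubble, $U_{\delta,\xi}(x)=\delta^{-\frac{n-2}{2}}U(\tfrac{x-\xi}{\delta})$, and $PU_{\delta,\xi}$ its projection onto the space of functions on $\Omega$ satisfying the Neumann condition, so that $PU_{\delta,\xi}-U_{\delta,\xi}$ is a controlled lower-order term measured by a Robin-type function $\tau_\Omega$ of $\Omega$. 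For an integer $k$, a scale $\delta>0$, and a radius $s>0$ with $\{x_1^2+x_2^2=s^2,\ x_3=\cdots=x_n=0\}$ well inside $\Omega$, put $\xi_j=(s\cos\tfrac{2\pi j}{k},s\sin\tfrac{2\pi j}{k},0,\dots,0)$ and take the ansatz $W=W_{k,\delta,s}=\sum_{j=1}^k PU_{\delta,\xi_j}$. Conditions $(H_1)$--$(H_2)$ guarantee that $W$ lies in the subspace $\mathcal H\subset H^1(\Omega)$ of functions invariant under the reflections $y_i\mapsto-y_i$ ($i\ge3$) and under the planar rotation by the angle $2\pi/k$ (a subgroup of the rotational symmetry of $\Omega$ furnished by $(H_2)$); the entire construction is performed in $\mathcal H$.

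The plan: \textbf{Step 1 (auxiliary equation).} For $(\delta,s)$ in the window where $\delta\to0$ with, roughly, $\delta^{\,n-4}\sim\mu\,s^{\,n-2}k^{-(n-2)}$ when $n\ge5$ and $|\log\delta|\sim \mu^{-1}s^{-2}k^{2}$ when $n=4$ --- so that in all cases $\delta\ll s/k$ and the bubbles are well separated relative to their scale --- linearize $J_\mu(u)=\frac12\int_\Omega|\nabla u|^2+\frac\mu2\int_\Omega u^2-\frac1{2^*}\int_\Omega|u|^{2^*}$ at $W$ and solve, by a contraction argument, for a small correction $\phi\in\mathcal H$ orthogonal in $H^1$ to $\operatorname{span}\{\partial_\delta W,\partial_s W\}$. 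The analytic core is invertibility of the linearized operator on that subspace with a bound \emph{uniform in $k$}: inside $\mathcal H$ the translational and rotational bubble kernels are suppressed by the symmetry, so only the two-dimensional near-kernel $\operatorname{span}\{\partial_\delta W,\partial_s W\}$ survives, and nondegeneracy of $U$ together with $\delta\ll s/k$ gives the uniform estimate; one then controls $\|\phi\|$ explicitly in $k$ and $\delta$. \textbf{Step 2 (energy expansion).} Feeding $u=W+\phi$ into $J_\mu$, derive an expansion of the form
\[
J_\mu(W+\phi)=k\Big[\tfrac1n S^{n/2}+a_n\,\mu\,\omega_n(\delta)-b_n\,k^{\,n-2}\,\tfrac{\delta^{\,n-2}}{s^{\,n-2}}+c_n\,\tau_\Omega(s)\,\delta^{\,n-2}+\text{l.o.t.}\Big],
\]
with $\omega_n(\delta)=\delta^2$ for $n\ge5$, $\omega_n(\delta)=\delta^2|\log\delta|$ for $n=4$, and $a_n,b_n>0$; the dominant term $k^{\,n-2}\delta^{\,n-2}s^{-(n-2)}$ comes from the interaction $-\int U_{\delta,\xi_i}^{2^*-1}U_{\delta,\xi_j}\sim-(\delta/|\xi_i-\xi_j|)^{n-2}$ of each bubble with its two neighbours at distance $\sim 2\pi s/k$ (for $n\ge4$ the sum over non-neighbours converges and is absorbed into the lower-order terms). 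Proving the remainder is genuinely lower order, uniformly as $k\to\infty$, is the technical heart.

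\textbf{Step 3 (reduction and conclusion).} For each large $k$ it remains to find a critical point of $(\delta,s)\mapsto J_\mu(W_{k,\delta,s}+\phi_{k,\delta,s})$ in the admissible window; a standard argument turns this into a critical point of $J_\mu$ on $\mathcal H$, hence --- by the principle of symmetric criticality and elliptic regularity --- a classical solution of \eqref{1.1}. In the bracket above, the $\partial_\delta$-equation balances the positive $\mu$-term against the negative interaction and fixes $\delta=\delta(k,s)\to0$; the residual $s$-dependence, governed by $\tau_\Omega$ and the subleading interactions, produces an interior value $s=s(k)$ with the circle staying away from $\partial\Omega$. The resulting $u_k=W_{k,\delta(k),s(k)}+\phi$ is positive (it is $H^1$- and, by elliptic estimates, uniformly close to the positive function $W$), nonconstant, and has energy $J_\mu(u_k)=\tfrac{k}{n}S^{n/2}(1+o(1))\to\infty$; since the energies are mutually distinct for $k$ large, this gives infinitely many positive nonconstant solutions with arbitrarily large energy.

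The main obstacle is carrying out Steps~1 and 2 \emph{uniformly in $k$}: as $k\to\infty$ the neighbouring bubble centres collapse at rate $s/k$, so one must track the inter-bubble interactions and the defect $PU-U$ with errors that provably stay below the leading terms $\mu\,\omega_n(\delta)$ and $k^{\,n-2}\delta^{\,n-2}s^{-(n-2)}$ of the reduced energy, and one must prove the linearized operator on $\mathcal H$ is invertible with a $k$-independent bound despite this collapse. Once the expansion is pinned down with that uniformity, locating the critical point $(\delta(k),s(k))$ for each large $k$ is comparatively routine.
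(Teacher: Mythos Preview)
This theorem is not proved in the present paper: it is quoted from \cite{wwy2} as background (Theorem~E in the introduction), and the paper's own contribution concerns only the single-bubble constructions in dimensions $4$ and $6$ (Theorem~\ref{th1.6}). So there is no ``paper's proof'' to compare against here.

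That said, your outline is essentially the strategy of \cite{wwy2}: place $k$ standard bubbles at the vertices of a regular $k$-gon in the plane $\{y_3=\cdots=y_n=0\}$, exploit $(H_1)$--$(H_2)$ to work in the invariant subspace where the only surviving reduced parameters are the common scale and the radius, do Lyapunov--Schmidt with estimates uniform in $k$, expand the reduced energy, and locate a critical point for each large $k$. Two cautions on your sketch. First, in \cite{wwy2} a key point is that the ansatz is not just $\sum_j PU_{\delta,\xi_j}$ but includes an additive constant of the right size (playing the role of the term $\eta\mu^{-1}\e^4$ you see in the $n=6$ construction of the present paper); without it the Neumann problem has a zero-average constraint that spoils the linear theory, and the reduced energy acquires the wrong leading balance. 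Second, the radial variable $s$ is not really determined by a Robin-type boundary correction $\tau_\Omega$ --- in this interior construction the leading competition is between the $\mu$-term and the mutual bubble interaction $\sim (\delta k/s)^{n-2}$, and the reduced problem is solved by a max (or min--max) over a box in $(\delta,s)$ rather than by matching against $\tau_\Omega$. With those adjustments your plan matches the cited reference.
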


Theorems \ref{th1.1}-\ref{th1.5} reveal that Lin-Ni's conjecture depends very sensitively not only on the dimensions, but also on the shape of the domain (convexity). A natural question is: what about the general domains?

 \medskip
So far  the only result for general domains is given by Rey-Wei \cite{rw} in which they disproved the conjecture in the five-dimensional case by constructing  an nontrivial solution which blows up at $K$ interior points in $\Omega$ provided $\mu$ is sufficiently small.  In view of  results of Theorem A, Rey and Wei \cite{rw} conjectured that  we should  have a negative answer to Lin-Ni's conjecture  in all the  dimensions $n=4,5,6.$  This is exactly what we shall achieve  in this paper.

\medskip

The purpose of this paper is to establish a result similar to (2) of Theorem A in general four, and six-dimensional domains by constructing  a nontrivial solution which blows up at a single point in $\Omega$ provided $\mu$ is sufficiently small. From now on, we consider the problem
\begin{equation}
\label{1.3}
\Delta u-\mu u+u^{\frac{n+2}{n-2}}=0~\mathrm{in}~\Omega,\quad u>0~\mathrm{in}~\Omega,\quad\frac{\partial u}{\partial \nu}=0~\mathrm{on}~\partial\Omega,
\end{equation}
where $n=4,6$ and $\Omega$ is a smooth bounded domain in $\mathbb{R}^{n}$ and $\mu>0$ very small. Our main result is stated as follows\\

\noindent {\bf Main Theorem.}
For problem (\ref{1.3}) in $n=4,6$, there exists $\mu_0>0$ such that for all $0<\mu<\mu_0,$ equation (\ref{1.3}) possesses a nontrivial solution which blows up at an interior point of
$\Omega.$\\

Combining with the results in \cite{rw}, we have the following corollary.

\begin{corollary}
When $n=4,5,6$, Lin-Ni's conjecture is false for general domains.
\end{corollary}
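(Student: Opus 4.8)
The corollary is immediate once the Main Theorem is established: for $n=5$ a nonconstant solution of \eqref{1.1} with $q=\tfrac{n+2}{n-2}$ and $\mu$ small that blows up at an interior point is already furnished by Rey--Wei \cite{rw}, while the cases $n=4$ and $n=6$ are exactly the content of the Main Theorem. So the task reduces to proving the Main Theorem, and the plan for this is a single-bubble finite-dimensional reduction of Lyapunov--Schmidt type, treating the linear term $-\mu u$ and the Neumann boundary condition as perturbations of the critical equation $\Delta U+U^{(n+2)/(n-2)}=0$ on $\mathbb{R}^n$. Write $p=\tfrac{n+2}{n-2}$, let $U_{\delta,\xi}(x)=\alpha_n\big(\delta/(\delta^2+|x-\xi|^2)\big)^{(n-2)/2}$ ($\delta>0$, $\xi\in\Omega$) be the Aubin--Talenti bubbles, and define the projected bubble $PU_{\delta,\xi}$ by $-\Delta PU_{\delta,\xi}+\mu\,PU_{\delta,\xi}=U_{\delta,\xi}^{p}$ in $\Omega$ with $\partial_\nu PU_{\delta,\xi}=0$ on $\partial\Omega$ --- solvable for every $\mu>0$ and making the Neumann condition exact. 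We look for a solution of \eqref{1.3} of the form $u=PU_{\delta,\xi}+\phi$, with $\xi$ in a compact subdomain of $\Omega$, $\delta=\delta(\mu)\to 0$ as $\mu\to 0$, and $\phi$ small in a suitable weighted norm and $L^2$-orthogonal to the approximate kernel $\operatorname{span}\{Z_0,\dots,Z_n\}$, where $Z_0=\partial_\delta PU_{\delta,\xi}$ and $Z_i=\partial_{\xi_i}PU_{\delta,\xi}$. A feature special to the Neumann setting is that $PU_{\delta,\xi}$ carries an almost-constant background of size $\sim\mu^{-1}\int_\Omega U_{\delta,\xi}^{p}$ coming from the zero mode of $-\Delta$ under Neumann conditions; this interacts with the constant solution $\mu^{(n-2)/4}$, and it may be convenient to build the background, or an extra amplitude parameter, into the ansatz from the start.

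The first technical step is to solve the auxiliary equation. Linearizing \eqref{1.3} at $PU_{\delta,\xi}$ gives $L_{\delta,\xi}\phi=\Delta\phi-\mu\phi+p\,(PU_{\delta,\xi})^{p-1}\phi$; the nondegeneracy of the standard bubble --- the kernel of $\Delta+p\,U_{1,0}^{p-1}$ in $D^{1,2}(\mathbb{R}^n)$ is precisely the $(n{+}1)$-dimensional span of the dilation and the translations --- together with the fact that $-\mu\phi$ is a compact, lower-order perturbation, yields uniform invertibility of $L_{\delta,\xi}$ on $(\operatorname{span}\{Z_0,\dots,Z_n\})^{\perp}$, with operator norms growing at worst like $|\log\delta|$ when $n=4$. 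A contraction-mapping argument then produces, for each $(\delta,\xi)$, a unique small $\phi=\phi_{\delta,\xi}$ solving \eqref{1.3} modulo $\operatorname{span}\{Z_0,\dots,Z_n\}$, with $\|\phi_{\delta,\xi}\|$ controlled by the ansatz error $(PU_{\delta,\xi})^{p}-U_{\delta,\xi}^{p}$. The problem is thus reduced to finding $(\delta,\xi)$ at which the Lagrange multipliers vanish, i.e.\ a critical point of the reduced energy $\mathcal{J}_\mu(\delta,\xi):=I_\mu\big(PU_{\delta,\xi}+\phi_{\delta,\xi}\big)$, where $I_\mu(v)=\tfrac12\int_\Omega(|\nabla v|^2+\mu v^2)-\tfrac{n-2}{2n}\int_\Omega v_+^{2n/(n-2)}$.

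The heart of the argument --- and the reason $n=4,6$ resisted the approach that settled $n=5$ in \cite{rw} --- is to expand $\mathcal{J}_\mu(\delta,\xi)$ in $\mu$ and $\delta$ to sufficiently high order. One first checks, using the definition of $PU_{\delta,\xi}$, that the explicit $\mu$-term of the energy cancels, so that $\mathcal{J}_\mu\approx\tfrac12\int_\Omega PU_{\delta,\xi}\,U_{\delta,\xi}^{p}-\tfrac{1}{p+1}\int_\Omega(PU_{\delta,\xi})^{p+1}$ up to the (higher-order) contribution of $\phi_{\delta,\xi}$; the leading term of this is the bubble energy $\tfrac1n S^{n/2}$. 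What then remains is a competition among positive and negative corrections of various orders in $\delta$ and $\mu^{-1}$ --- several of them stemming from the almost-constant Neumann background of $PU_{\delta,\xi}$ and its interplay with the constant solution, together with a geometric term carrying a Robin-type function of $\Omega$. In these two borderline dimensions --- note that $p-1=4/(n-2)$ equals $1$ for $n=6$ and $2$ for $n=4$, and that the $n=4$ expansion acquires logarithms, exactly as in the Brezis--Nirenberg problem --- the crude leading terms cancel or fail to pin down a critical point, so one must push the expansion further and keep careful track of the constants and, for $n=4$, of the logarithmic factors. The outcome is that $\mathcal{J}_\mu$ has a critical point at some $\delta=\delta_\mu$ with $\delta_\mu\to 0$ as $\mu\to 0$ (at a dimension-dependent rate), for a suitable interior location $\xi=\xi_\mu$ selected by a min--max or topological-degree argument in $\xi$ that needs no hypothesis whatsoever on $\Omega$.

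Finally, the pair $(\delta_\mu,\xi_\mu)$ produced in the previous step is, through the reduction, a genuine solution $u_\mu=PU_{\delta_\mu,\xi_\mu}+\phi_{\delta_\mu,\xi_\mu}$ of \eqref{1.3}; it is positive, being a small perturbation of the positive function $PU_{\delta_\mu,\xi_\mu}$ (alternatively, $u_\mu\ge 0$ as a weak solution combined with the strong maximum principle), and $u_\mu(\xi_\mu)\sim\alpha_n\delta_\mu^{-(n-2)/2}\to\infty$ while the only constant solution is $\mu^{(n-2)/4}\to 0$, so $u_\mu$ is nonconstant and blows up at the interior point $\xi_\mu$. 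This proves the Main Theorem, hence the corollary. The hard part will be the energy expansion of the third step: the reduction machinery of the first two steps is by now routine, whereas the sign and magnitude of the competing terms in $\mathcal{J}_\mu$ are dimension-sensitive in an essential way, which is precisely why dimensions four and six require a substantially more delicate computation than dimension five.
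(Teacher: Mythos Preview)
Your reduction of the corollary to the Main Theorem plus \cite{rw} is exactly right, and your overall plan---a single-bubble Lyapunov--Schmidt reduction with an ansatz that absorbs both the Neumann boundary condition and a constant background, followed by an energy expansion---is the paper's strategy as well. For $n=4$ your outline is essentially what the paper does, though the paper builds the approximate solution explicitly (bubble plus a correction $\Psi$ solving $\Delta\Psi+U=0$, plus a Robin-function term, plus a boundary layer $R$) rather than via an abstract projection $PU$, and it fixes the precise scaling $\mu=(c_1/(-\ln\varepsilon))^{1/2}$, i.e.\ $\varepsilon=e^{-c_1/\mu^2}$, which governs the logarithmic balance in the reduced energy.

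There is, however, a genuine gap in your treatment of $n=6$. You write that nondegeneracy of the bubble ``together with the fact that $-\mu\phi$ is a compact, lower-order perturbation, yields uniform invertibility of $L_{\delta,\xi}$ on $(\operatorname{span}\{Z_0,\dots,Z_n\})^{\perp}$''. This fails in dimension six. Since $p-1=1$, the linearized potential $p(PU)^{p-1}=2PU$ contains, in the outer region, a constant of the same order as $\mu$; after your own observation that $PU$ carries a background $\sim\mu^{-1}\overline{U^{p}}$, the effective zeroth-order coefficient $\mu-2PU$ nearly vanishes on constants, so the Neumann zero mode is resonant and the $(n{+}1)$-dimensional linear theory does not close. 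The paper's remedy---which it flags as the new and essential ingredient---is to promote the constant amplitude to a \emph{free} parameter $\eta$ in the ansatz $W=U+\mu\varepsilon^{2}\hat U+\eta\mu^{-1}\varepsilon^{4}$ and to enlarge the approximate kernel to $\{Z_0,\dots,Z_6,Z_7\}$ with $Z_7=-\Delta\partial_\eta W+\mu\varepsilon^{2}\partial_\eta W$ a constant. Orthogonality to $Z_7$ forces $\overline{\phi}=0$, which is exactly the hypothesis needed for the a~priori estimate (the paper's Corollary~3.1 for $n=6$ requires $\overline u=\overline f=0$). The reduced problem is then $(n{+}2)$-dimensional, and the energy expansion must be carried to the $\varepsilon^{4}$ level, where a min--max in $(\Lambda,\eta,Q)$ produces the critical point. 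You allude to ``an extra amplitude parameter'' as a possible convenience; in fact it is not optional, and without it your linear step does not go through.
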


In order to make more precise statement of the Main Theorem, we introduce the following notation. Let $G(x,Q)$~be the Green's function defined as
\begin{equation}
\label{1.4}
\Delta_{x}G(x,Q)+\delta_{Q}-\frac {1}{|\Omega|}=0~~{\rm in}~\Omega,~~\frac {\pt G}{\pt \nu}=0~~{\rm on}~\pt \Omega,~~~\int_{\Omega}G(x,Q)dx=0.
\end{equation}

We decompose
$$G(x,Q)=K(|x-Q|)-H(x,Q),$$
where
\begin{equation}
\label{1.5}
K(r)=\frac {1}{c_nr^{n-2}},~c_n=(n-2)|S^{n-1}|,
\end{equation}
is the fundamental solution of the Laplacian operator in $\Bbb R^n(|S^{n-1}|$~denotes the area of the unit sphere),~$n=4,6$.\par

For the reason of normalization, we consider throughout the paper the following equation:
\begin{equation}
\label{1.6}
\Delta u-\mu u+n(n-2)u^{\frac {n+2}{n-2}}=0,~~u>0~~{\rm in}~\Omega,~~\frac {\pt u}{\pt \nu}=0~~{\rm on}~\pt \Omega.
\end{equation}\par

We recall that, according to \cite{cgs}, the functions
\begin{equation}
\label{1.7}
U_{\Lambda, Q}=(\frac {\Lambda}{\Lambda^2+|x-Q|^2})^{\frac {n-2}{2}},~\Lambda>0,~Q\in \Bbb R^n,
\end{equation}
are the only solutions to the problem
\begin{equation}
\label{1.8}
-\Delta u=n(n-2)u^{\frac {n+2}{n-2}},~~u>0~~~{\rm in}~\Bbb R^n.
\end{equation}

Our main result can be stated precisely as follows:
\begin{theorem}
\label{th1.6}
Let $\Omega$ be any smooth bounded domain in $\mathbb{R}^n.$ \par
{\rm(1).}~For $n=4,$ there exists $\mu_1>0$ such that for $0<\mu<\mu_1,$ problem (\ref{1.6}) has a nontrivial solution
$$u_{\mu}=U_{e^{-\frac{c_1}{\mu^2}}\Lambda,Q^{\mu}}+O(\mu^{-1}e^{-\frac{c_1}{\mu^2}}),$$
where $c_1$ is some constant depending on the domain, to be determined later, $\Lambda$ will be some generic constant. The blow up point $Q$ depends on the domain and parameter $\Lambda.$
\par
{\rm(2).}~For $n=6$, there exists $\mu_2>0$ such that for $0<\mu<\mu_2,$ problem (\ref{1.6}) has a nontrivial solution
$$u_{\mu}=U_{\mu\Lambda,Q^{\mu}}+O(\mu),$$
where $\Lambda\rightarrow\Lambda_0,$ and $\Lambda_0>0$ is some generic constant. The blow up point $Q$ depends on the domain and parameter $\Lambda.$
\end{theorem}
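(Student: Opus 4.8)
The plan is to construct the blow-up solution by the Lyapunov--Schmidt finite-dimensional reduction, treating the dimension-four and dimension-six cases in parallel but with different choices of the concentration scale $\Lambda(\mu)$. First I would fix an approximate solution of the form $W_{\Lambda,Q} = PU_{\Lambda,Q}$, where $PU_{\Lambda,Q}$ is the projection of the standard bubble $U_{\Lambda,Q}$ from \eqref{1.7} onto the space of functions satisfying the homogeneous Neumann condition on $\partial\Omega$; that is, $PU_{\Lambda,Q}$ solves $\Delta(PU_{\Lambda,Q}) - \mu PU_{\Lambda,Q} + n(n-2)U_{\Lambda,Q}^{(n+2)/(n-2)} = 0$ in $\Omega$ with $\partial_\nu (PU_{\Lambda,Q}) = 0$ on $\partial\Omega$. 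The key preliminary computation is the asymptotic expansion of $PU_{\Lambda,Q} - U_{\Lambda,Q}$ in terms of the regular part $H(x,Q)$ of the Green's function; for interior $Q$ this difference is governed by $\Lambda^{(n-2)/2} H(Q,Q)$ plus lower-order terms, and this is precisely where the mean-curvature-independent leading term enters and where the exponentially small scale $e^{-c_1/\mu^2}$ in $n=4$ versus the polynomial scale $\mu\Lambda$ in $n=6$ originates (the balance is between the $\mu$-mass term $\mu\int W^2$, which in $n=4$ carries a logarithm, and the boundary/Green interaction term).

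Next I would set up the nonlinear problem $u = W_{\Lambda,Q} + \phi$ and project the equation onto the orthogonal complement of the approximate kernel spanned by $\partial_\Lambda PU_{\Lambda,Q}$ and $\partial_{Q_j} PU_{\Lambda,Q}$, $j=1,\dots,n$. Using the nondegeneracy of the bubbles for \eqref{1.8} one obtains a uniformly invertible linearized operator on this complement, and a contraction-mapping argument yields, for each admissible pair $(\Lambda,Q)$, a unique small solution $\phi = \phi_{\Lambda,Q}$ with a sharp bound on $\|\phi_{\Lambda,Q}\|$ in the appropriate weighted norm; tracking this bound carefully gives the error terms $O(\mu^{-1}e^{-c_1/\mu^2})$ and $O(\mu)$ claimed in the statement. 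Then the reduced (finite-dimensional) problem amounts to finding a critical point of the reduced energy $\mathcal{F}(\Lambda,Q) := J_\mu(W_{\Lambda,Q}+\phi_{\Lambda,Q})$, and the heart of the matter is the expansion of $\mathcal{F}$: schematically $\mathcal{F}(\Lambda,Q) = A_n + (\text{mass term in }\mu) + B_n \Lambda^{n-2} H(Q,Q) + o(\cdot)$, from which one reads off that $H(Q,Q)$ must be optimized (any interior domain has a point where $\nabla_Q H(Q,Q)=0$, e.g.\ a minimum, so no geometry/topology hypothesis is needed) and that the $\Lambda$-equation fixes the concentration rate as a function of $\mu$.

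The step I expect to be the main obstacle is the precise energy expansion and, in particular, the $\Lambda$-reduction in dimension four. In $n=4$ the critical Sobolev exponent is $3$, the term $\mu \int_\Omega W_{\Lambda,Q}^2$ produces a $\mu^2 \Lambda^2 |\log \Lambda|$ contribution, and this must be balanced against a term of order $\Lambda^2 H(Q,Q)$; solving the resulting transcendental relation is what forces $\Lambda \sim e^{-c_1/\mu^2}$ and makes all error estimates exponentially delicate — one has to carry out the Lyapunov--Schmidt scheme with norms whose implied constants are controlled uniformly in this exponentially small scale, which is more demanding than the usual polynomial bookkeeping. Dimension six is cleaner because the exponent $2$ there (critical exponent $=(n+2)/(n-2)=2$) makes the nonlinearity quadratic and the $\mu$-term scales polynomially, so the balance $\mu \Lambda^2 \sim \Lambda^4 H(Q,Q)$ directly gives $\Lambda \sim \mu^{1/2}$, matching the stated $U_{\mu\Lambda,Q}$; still, one must handle the fact that $W_{\Lambda,Q}+\phi$ need not be positive a priori and verify positivity a posteriori from the smallness of $\phi$ relative to the bubble. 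Once the reduced energy is expanded with error $o$ of the coupling term, a degree or min-max argument in $(\Lambda,Q)$ produces the critical point, and standard elliptic regularity upgrades the solution to $C^2(\overline\Omega)$, completing the proof of both parts of Theorem~\ref{th1.6}.
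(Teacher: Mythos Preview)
Your overall Lyapunov--Schmidt strategy and your treatment of the $n=4$ case are broadly in line with the paper's. For $n=6$, however, there is a genuine gap: the approximate kernel you project onto is $(n+1)$-dimensional (the usual $\partial_\Lambda,\partial_{Q_j}$ directions), whereas the paper's key new observation is that in six dimensions one must enlarge it to $(n+2)$ dimensions by introducing an \emph{extra constant parameter} $\eta$ in the ansatz, $W = U_{\Lambda,\bar Q}+\mu\varepsilon^2\hat U_{\Lambda,\bar Q}+\eta\mu^{-1}\varepsilon^4$. The reason is a resonance: for $n=6$ the linearized operator $-\Delta\phi+\mu\varepsilon^2\phi-48W\phi$ loses the zero-order term at spatial infinity (since $48W\to 48\eta\varepsilon^3=\mu\varepsilon^2$ when $\eta\approx\tfrac{1}{48}$), so constants lie in the kernel of the outer problem and the reduction you describe would not close without this additional direction.

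This omission is not cosmetic; it also makes your predicted balance for $n=6$ incorrect. The leading part of the reduced energy in the paper is not ``mass term versus $\Lambda^4 H(Q,Q)$'' but rather the cubic polynomial
\[
\tfrac12\eta^2|\Omega|-c_6\eta\Lambda^2+\tfrac{1}{48}c_6\Lambda^2-8\eta^3|\Omega|
\]
in $(\Lambda,\eta)$, whose critical point fixes $\eta\to\tfrac{1}{48}$ and $\tfrac{c_6\Lambda^2}{|\Omega|}\to\tfrac{1}{96}$; the $H(Q,Q)$ term enters only at the next order and is located via a min-max (not a pure minimization) over $(\Lambda,\eta,Q)$. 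In particular the concentration scale in the original variables is $\mu\Lambda$ with $\Lambda=O(1)$, not $\Lambda\sim\mu^{1/2}$ as your balance $\mu\Lambda^2\sim\Lambda^4 H(Q,Q)$ would suggest. You would need to rebuild the $n=6$ ansatz with the extra parameter $\eta$, redo the energy expansion to capture the polynomial above, and replace the degree argument by a saddle-point (min-max) argument in the enlarged parameter space.
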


We introduce several notations for late use. Set
\begin{equation}
\label{1.9}
\Omega_{\e}:=\Omega/\e=\{z|\e z\in \Omega\},
\end{equation}
and
\begin{align}
\label{1.10}
\mu=\begin{cases}(\frac {c_1}{-\ln \e})^{\frac 12},~~& n=4,\\
~\e~,~~& n=6.\end{cases}
\end{align}

Through the transformation $u(x)\longmapsto \e^{-\frac {n-2}{2}}u(x/\e),$ (\ref{1.6}) becomes
\begin{equation}
\label{1.11}
\Delta u-\mu\e^2u+n(n-2)u^{\frac {n+2}{n-2}}=0,~~~~u>0~~{\rm in}~\Omega_{\e},~~\frac {\pt u}{\pt \nu}=0~~{\rm on}~ \Omega_{\e}.
\end{equation}

We set
\begin{equation}
\label{1.12}
S_{\e}[u]:=-\Delta u+\mu\e^2u-n(n-2)u_+^{\frac {n+2}{n-2}},~u_+=\max(u,0),
\end{equation}
and introduce the following functional
\begin{equation}
\label{1.13}
J_{\e}[u]:=\frac{1}{2}\int_{\O}|\nabla u|^2+\frac{1}{2}\mu\e^2\int_{\O}u^2-\frac {(n-2)^2}{2}\int_{\O}|u|^{\frac{2n}{n-2}},~~u\in H^1({\O}).
\end{equation}

\medskip

Depending on the dimensions, we have to overcome different difficulties. In dimension four, the main problem is that the relation between $\mu$ and $\epsilon$ is only implicit. Dimension six is the {\em borderline} case, since  in the linearized operator the constant term $-\mu u$ disappears. To remedy this problem, we have to introduce an artificial parameter $\eta$ (see (\ref{2.14})). This case can be considered as "resonance" case because the constant lies in the kernel of the outer problem.

\medskip

The paper is organized as follows: In Section 2, we construct suitable approximated bubble solution $W,$~and state their properties. In Section 3,~we solve the linearized problem at $W$~ up to a finite-dimensional space. Then, in Section 4, we are able to solve the nonlinear problem in that space. In section 5, we study the remaining finite-dimensional problem and solve it in Section 6, finding critical points of the reduced energy functional. Some numerical results may be found in the last Section.

\medskip

\noindent
{\bf Acknowledgements:} The research of Wei is supported by a NSERC of Canada. Part of the paper was finished when the second author was visiting Chinese University of Hong Kong. He would like to thank the institute for their warm hospitality.

\section{Approximate bubble solutions}
\setcounter{equation}{0}
In this section, we construct suitable approximate solution, in the neighborhood of which solutions in Theorem \ref{th1.6} will be found. Depending on the dimensions, we shall make different ansatz.

%Since the construction of the approximate solution in $n=4$ is different from the case $n=6.$ We will give the detailed explanation in the following respectively.

Let $\mu$ and $\e$ be as defined in (\ref{1.10}). For any $Q\in\Omega$ with $d(Q/\e,\partial\O)$ large, $U_{\Lambda,Q/\e}$ in (\ref{1.7}) provides an approximate solution of (\ref{1.11}). Because of the appearance of the additional linear term $\mu\varepsilon^2u$ and the homogeneous Neumann boundary condition, we need to add two extra terms to get a better approximation. Now we describe the next order terms in different dimensions.

When $n=4$, we consider the following  linear equation
 \begin{equation}
\label{2.1}
\Delta\bar{\Psi}+U_{1,0}=0~~~{\rm in }~\Bbb R^4,~~~\bar {\Psi}(0)=1.
\end{equation}
which has q unique radial solution with the following asymptotic behavior
\begin{equation}
\label{2.2}
\bar {\Psi}(|y|)=-\frac 12\ln|y|+I+O\Big{(}\frac {1}{|y|}\Big {)},~~\bar {\Psi}^{'}=-\frac {1}{2|y|}\Big{(}1+O\Big{(}\frac {\ln (1+|y|)}{|y|^2}\Big {)}\Big {)}~~{\rm as}~|y|\rightarrow \infty,
\end{equation}
where $I$~is a generic constant. For $ Q \in \Omega_\epsilon$, set
\begin{equation}
\label{2.3}
\Psi_{\Lambda,Q}=\frac {\Lambda}{2}\ln \frac {1}{\Lambda \e}+\Lambda \bar {\Psi}(\frac {y-Q}{\Lambda}).
\end{equation}
which satisfies
$$\Delta \Psi_{\Lambda,Q}+U_{\Lambda,Q}=0 \ \ \ \mbox{in} \ \mathbb{R}^4.$$
From (\ref{2.2}) we derive that
\begin{equation}
\label{2.4}
|\Psi_{\Lambda,Q}(y)|,~|\partial_{\Lambda}\Psi_{\Lambda,Q}(y)|\leq C\Big{|}\ln \frac {1}{\e(1+|y-Q|)}\Big{|},~~|\partial_{Q_i}\Psi_{\Lambda,Q}(y)|\leq \frac {C}{1+|y-Q|}.
\end{equation}

Now we turn to the case of $n=6$.  Let $\Psi(|y|)$ be the radial solution of
\begin{equation}
\label{2.5}
\Delta\Psi+U_{1,0}=0~\mathrm{in}~\mathbb{R}^6,\quad\Psi\rightarrow0~\mathrm{as}~|y|\rightarrow+\infty.
\end{equation}
Then it is easy to check that
\begin{equation}
\label{2.6}
\Psi(y)=\frac{1}{4|y|^2}(1+O(\frac{1}{|y|^2}))~\mathrm{as}~|y|\rightarrow+\infty.
\end{equation}
For $Q\in\O,$ we set
$$\Psi_{\Lambda,Q}(y)=\Psi(\frac{y-Q}{\Lambda}).$$
Then it satisfies
$$\Delta\Psi_{\Lambda,Q}(y)+U_{\Lambda,Q}=0~\rm{in}~\mathbb{R}^6.$$
It is easy to check that
\begin{equation}
\label{2.7}
|\Psi_{\Lambda,Q}(y)|,~|\partial_{\Lambda}\Psi_{\Lambda,Q}(y)|\leq\frac{C}{(1+|y-Q|)^2},
~|\partial_{Q_i}\Psi_{\Lambda,Q}(y)|\leq\frac{C}{(1+|y-Q|)^3}.
\end{equation}

The above considerations take care of the linear term $\mu \epsilon^2 u$ in the equation but we still need to obtain approximate solutions which satisfy the boundary boundary condition. To this end,  we need an extra correction term. For this purpose, we define
 \begin{equation}
\label{2.8}
\hat{U}_{\Lambda,Q/\varepsilon}(z)=-\Psi_{\Lambda,Q/\e}(z)-c_n\mu^{-1}\e^{n-4}\Lambda^{\frac{n-2}{2}} H(\e z,Q)
+R_{\e,\Lambda, Q}(z)\chi(\e z),
\end{equation}
where $R_{\e,\Lambda, Q}$ is the unique solution satisfying the following boundary value problem
\begin{equation}
\label{2.9} \left\{\begin{array}{l}
 \Delta R_{\e,\Lambda, Q}-\e^2 R_{\e,\Lambda, Q}=0~~{\rm in }~~\Omega_{\e} \\
\mu\e^2\frac {\pt R_{\e,\Lambda, Q}}{\pt \nu}=-\frac {\pt}{\pt \nu}\Big{[}U_{\Lambda,Q/\e}-\mu\e^2\Psi_{\Lambda,Q/\e}-c_n\e^{n-2}\Lambda^{\frac {n-2}{2}}
 H(\e z,Q)\Big{]}~~{\rm on }~\partial\Omega_{\e}.
 \end{array}
 \right.
\end{equation}
Here $\chi(x)$~is a smooth cut-off function in $\Omega$~such that $\chi(x)=1$~for ~$d(x,\partial \Omega)< \delta/4$~and ~$\chi(x)=0$~for ~$d(x,\pt \Omega)>\delta/2.$

Observe that from (\ref{2.2}) and (\ref{2.6}), an expansion of $U_{\Lambda,Q/\e}$ and the definition of $H$ imply that the normal derivative of $R_{\e,Q}$ is of order $\e^{n-3}$ on the boundary of $\O,$ from which we deduce that\footnote{For $n=4$, the parameter $\Lambda$ is located in a range that depends on $\e$. Therefore, we have to take $\Lambda$ into consideration, and we note that each component on the right hand side of (\ref{2.9}) exactly carry $\Lambda$ as a factor.}
\begin{equation}
\label{2.10}
\big|R_{\e,\Lambda, Q}\big|+\big|\e^{-1}\nabla_{z}R_{\e,\Lambda, Q}\big|+\big|\e^{-2}\nabla^2_{z}R_{\e,\Lambda, Q}\big|\leq \begin{cases}C\Lambda,~~~~~~~~~~&~n=4,\\C\e^2,& ~n=6.\end{cases}
\end{equation}
Such an estimate also holds  for the derivatives of $R_{\e,\Lambda,Q}$~with respect to $\Lambda,~Q.$

Finally we are able to define the approximate bubble solutions.  Depending on the dimensions we shall use different ansatz.  For $n=4,$ we let
\begin{align}
\label{2.11}
\Lambda_{4,1} \leq \Lambda \leq \Lambda_{4,2},~Q\in \mathcal {M}_{\delta_4}:=\{x \in \Omega|~d(x,\partial \Omega)>\delta_4\},
\end{align}
where $\Lambda_{4,1}=\exp(-\frac12)\e^{\beta}$, $\Lambda_{4,2}=\exp(-\frac12)\e^{-\beta}$   are constants may depending on the domain and $\delta_4$ is a small constant, to be determined later. In viewing of the rescaling, we write
\begin{align*}
\bar{Q}=\frac {1}{\e}Q,
\end{align*}
and we define our approximate solutions as
\begin{equation}
\label{2.12}
W_{\e,\Lambda,Q}=U_{\Lambda,Q/\e}+\mu\e^2\hat{U}_{\Lambda,Q/\e}+\frac{c_4\Lambda}{|\Omega|}\mu^{-1}\e^{2}.
\end{equation}

For $n=6$, let $(\Lambda, Q, \eta)$ satisfy
\begin{align}
\label{2.13}
\sqrt{\frac{|\Omega|}{c_6}(\frac{1}{96}-\Lambda_6\e^{\frac23})}\leq&\Lambda
\leq\sqrt{\frac{|\Omega|}{c_6}(\frac{1}{96}+\Lambda_6\e^{\frac23})},\nonumber\\
Q\in \mathcal {M}_{\delta_6}:=\{x &\in \Omega|~d(x,\partial \Omega)>\delta_6\},\nonumber\\
\frac{1}{48}-\eta_6\e^{\frac13}\leq&\eta\leq\frac{1}{48}+\eta_6\e^{\frac13},
\end{align}
where $c_6= 4 |S^{5}|$, $\Lambda_6$ and $\eta_6$ are some constants that may depend on the domain, $\delta_6$ is a small constant, which is determined later. Our approximate solution for $n=6$ is the following
\begin{equation}
\label{2.14}
W_{\e,\Lambda,Q,\eta}=U_{\Lambda,\bar{Q}}+\mu\e^2\hat{U}_{\Lambda,\bar{Q}}+\eta\mu^{-1}\e^4.
\end{equation}

\medskip

We remark that unlike the case of $n=4$, in the case of $n=6$, {\em an extra parameter} $\eta$ is introduced.  The main reason is that when $n=6$ the linear term $-\mu \epsilon^2$ is lost in linearized outer problem. Actually this is one of the main difficulties. This seems to be quite {\em new} in the Neumann boundary value problems.

For convenience, in the following, we write $W,~U,~\hat{U},~R,~{\rm and}~\Psi$ instead of $W_{\e,\Lambda,Q}$, $U_{\e,Q/\e},$ $\hat{U}_{\Lambda,Q/\e}$, $R_{\e,\Lambda,Q}$ and $\Psi_{\Lambda,Q/\e}$ respectively. By construction, the normal derivative of $W$ vanishes on the boundary of $\Omega_{\e},$ and $W$ satisfies

\begin{align}
\label{2.15}
-\Delta W+\mu\e^2W=\left\{\begin{array}{ll}
8U^3+\mu^2\e^4\hat{U}-\mu\e^2\Delta (R_{\e,\Lambda,Q}\chi),~&n=4,\\
24U^2+\mu^2\e^4\hat{U}-\mu\e^2\Delta (R_{\e,\Lambda,Q}\chi)+\e^6(\eta-\frac{c_6\Lambda^2}{|\Omega|}),~&n=6.
\end{array}\right.
\end{align}

We note that $W$ depends smoothly on $\Lambda,~\bar{Q}.$~Setting, for $z\in \Omega_{\e},$
$$\l z-\bar{Q}\r=(1+|z-\bar{Q}|^2)^{\frac 12}.$$
A simple computation yields
\begin{align}
\label{2.16}
|W(z)|\leq&\left\{
\begin{array}{ll}
C(\e^2(-\ln\e)^{\frac{1}{2}}+\l z-\bar{Q}\r^{-2}),~&n=4,\\
C(\e^3+\l z-\bar{Q}\r^{-4}),~&n=6,
\end{array}\right.
\end{align}
\begin{align}
\label{2.17}
|D_{\Lambda}W(z)|\leq&\left\{
\begin{array}{ll}
C(\e^2(-\ln\e)^{\frac{1}{2}}+\l z-\bar{Q}\r^{-2}),~&n=4,\\
C\l z-\bar{Q}\r^{-4},~&n=6,
\end{array}\right.
\end{align}
\begin{align}
\label{2.18}
|D_{\bar{Q}}W(z)|\leq\left\{
\begin{array}{ll}
C(\l z-\bar{Q}\r^{-3}),~&n=4,\\
C(\l z-\bar{Q}\r^{-5}),~&n=6,
\end{array}\right.
\end{align}
and
\begin{equation}
\label{2.19}
|D_{\eta}W(z)|=O(\e^3),~n=6.
\end{equation}

According to the choice of $W,$~we have the following error and energy estimates, whose proof will be given in Section 7.

\begin{lemma}
\label{le2.1}
For $n=4$, we have
\begin{align}
\label{2.20}
\big|S_{\e}[W](z)\big|\leq~& C\Big{(} \l z-\bar{Q}\r^{-4}\e^2(-\ln\e)^{\frac{1}{2}}+\l z-\bar{Q}\r^{-2}\e^4(-\ln\e)\Big)\nonumber\\
&+C\Big(\big(\frac {\e^4}{(-\ln \e)^{\frac 12}}+\frac {\e^4}{(-\ln \e)}|\ln (\frac {1}{\e(1+|z-\bar{Q}|)})|\big)\Lambda\Big{)},
\end{align}
\begin{align}
\label{2.21}
\big|D_{\Lambda}S_{\e}[W](z)\big|\leq~& C\Big{(} \l z-\bar{Q}\r^{-4}\e^2(-\ln\e)^{\frac{1}{2}}+\l z-\bar{Q}\r^{-2}\e^4(-\ln\e)\nonumber\\
&+\frac {\e^4}{(-\ln \e)^{\frac 12}}+\frac {\e^4}{(-\ln \e)}|\ln (\frac {1}{\e(1+|z-\bar{Q}|)})|\Big{)},
\end{align}
\begin{align}
\label{2.22}
\big|D_{\bar{Q}}S_{\e}[W](z)\big|\leq~ &C\Big{(}\l z-\bar{Q}\r^{-5}\e^2(-\ln\e)^{\frac{1}{2}}+\l z-\bar{Q}\r^{-3}\e^4(-\ln\e)\nonumber\\
&+\l z-\bar{Q}\r^{-1}\frac{\e^4}{(-\ln\e)}\Big{)},
\end{align}
and
\begin{align}
\label{2.23}
J_{\e}[W]=&~2\int_{\Bbb R^4}U_{1,0}^4+\frac {c_4\Lambda^2}{4}\e^2(\frac {c_1}{-\ln \e})^{\frac 12}\ln \frac {1}{\Lambda \e}-\frac {c_4^2\Lambda^2}{2|\Omega|}\e^2(\frac {c_1}{-\ln \e})^{-\frac 12}\nonumber\\
&+\frac 12c_4^2\Lambda^2\e^2H(Q,Q)+O\big(\e^2\big(\frac{c_1}{-\ln\e}\big)^{\frac12}\Lambda^2\big)+O(\e^4(-\ln\e)^2).
\end{align}
For $n=6,$ we have
\begin{align}
\label{2.24}
S_{\e}[W](z)=-\e^6\big(24\eta^2-\eta+\frac{c_6\Lambda^2}{|\Omega|}\big)+O(1)\e^3\l z-\bar{Q}\r^{-4},
\end{align}
\begin{align}
\label{2.25}
\big|D_{\Lambda}S_{\e}[W](z)\big|=O(1)\big(\l z-\bar{Q}\r^{-4}\e^3+\e^6\big),
\end{align}
\begin{align}
\label{2.26}
\big|D_{\eta}S_{\e}[W](z)\big|=O(1)\big(\l z-\bar{Q}\r^{-4}\e^3+\e^{6\frac13}\big),
\end{align}
\begin{align}
\label{2.27}
\big|D_{\bar{Q}}S_{\e}[W](z)\big|\leq~& C\l z-\bar{Q}\r^{-5}\e^3,
\end{align}
and
\begin{align}
\label{2.28}
J_{\e}[W]=~&4\int_{\Bbb R^6}U_{1,0}^3+\Big(\frac12\eta^2|\Omega|-c_6\eta\Lambda^2+\frac{1}{48}c_6\Lambda^2-8\eta^3|\Omega|\Big)\e^3
+\frac12c_6^2\Lambda^4\e^4H(Q,Q)\nonumber\\&
+\frac12\big(\eta-\frac{c_6\Lambda^2}{|\Omega|}\big)\e^4\int_{\Omega}\frac{\Lambda^2}{|x-Q|^4}+O(\e^5).
\end{align}
\end{lemma}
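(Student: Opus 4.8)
The statement to be proved is the error/energy estimate Lemma \ref{le2.1}, which collects, for $n=4$ and $n=6$, pointwise bounds on $S_\e[W]$ and its derivatives with respect to the parameters $\Lambda,\bar Q,\eta$, together with the asymptotic expansion of the energy $J_\e[W]$. The proof is a long but essentially routine computation; the real work is organizing the bookkeeping so that the cancellations built into the ansatz (\ref{2.12}) and (\ref{2.14}) are visible. I would begin from the identity (\ref{2.15}) for $-\Delta W+\mu\e^2 W$, write $S_\e[W]= -\Delta W+\mu\e^2 W - n(n-2)W_+^{(n+2)/(n-2)}$, and split the nonlinear term around the leading bubble: $W = U + \mu\e^2\hat U + (\text{constant term})$, so that
\[
W_+^{\frac{n+2}{n-2}} = U^{\frac{n+2}{n-2}} + \tfrac{n+2}{n-2}U^{\frac{4}{n-2}}\big(\mu\e^2\hat U + \text{const}\big) + O\big(U^{\frac{6-n}{n-2}}(\mu\e^2\hat U+\text{const})^2\big),
\]
valid since the lower-order pieces are small relative to $U$ in the region $|z-\bar Q|\lesssim \e^{-1}$ and the whole thing is $O(\e^{\#})$ far out. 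Matching against (\ref{2.15}), the $U^{(n+2)/(n-2)}$ terms cancel exactly (this is why $n(n-2)U^{(n+2)/(n-2)}=-\Delta U$ was used), the term $\tfrac{n+2}{n-2}n(n-2)U^{4/(n-2)}\Psi_{\Lambda,Q}$ cancels $-\mu\e^2\Delta\Psi_{\Lambda,Q}$ by (\ref{2.1})/(\ref{2.5}), and one is left to estimate: (i) the quadratic remainder $O(U^{(6-n)/(n-2)}(\mu\e^2\hat U)^2)$, (ii) the contribution of the $H(\e z,Q)$ and $R_{\e,\Lambda,Q}\chi$ pieces of $\hat U$, using (\ref{2.10}) and the smoothness of $H$, (iii) the $\mu^2\e^4\hat U$ term already present in (\ref{2.15}), and (iv) for $n=6$ the algebraic constant $\e^6(24\eta^2-\eta+c_6\Lambda^2/|\Omega|)$, which is precisely the residual of the constant part of the ansatz against $24(\text{const})^2\cdot$(coefficient) — this produces the leading $-\e^6(24\eta^2-\eta+c_6\Lambda^2/|\Omega|)$ in (\ref{2.24}) and explains why $\eta$ had to be introduced. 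Using $\langle z-\bar Q\rangle^{-1}$ to organize the spatial decay and plugging in $\mu = (c_1/(-\ln\e))^{1/2}$ for $n=4$, $\mu=\e$ for $n=6$ then yields (\ref{2.20})–(\ref{2.22}) and (\ref{2.24})–(\ref{2.27}).

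**Derivatives in the parameters.** For (\ref{2.21})–(\ref{2.22}), (\ref{2.25})–(\ref{2.27}) I would differentiate the expression for $S_\e[W]$ term by term, using that $\partial_\Lambda$, $\partial_{\bar Q}$ and $\partial_\eta$ each either improve or preserve the decay rates: the bounds (\ref{2.4}), (\ref{2.7}) for $\Psi_{\Lambda,Q}$ and $\partial\Psi$, the remark after (\ref{2.10}) that the $R$-estimates persist under differentiation in $\Lambda,Q$, and the explicit dependence of $U_{\Lambda,Q}$ on its parameters (giving one extra power of $\langle z-\bar Q\rangle^{-1}$ per $\bar Q$-derivative and a comparable-size $\Lambda$-derivative). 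The only subtlety for $n=4$ is that $\Lambda$ ranges over an $\e$-dependent interval $[\exp(-\tfrac12)\e^\beta,\exp(-\tfrac12)\e^{-\beta}]$, so factors of $\Lambda$ and $\ln(1/(\Lambda\e))$ must be carried explicitly rather than absorbed into constants — this is exactly the content of the footnote after (\ref{2.9}), and it is why $\Lambda$ appears on the right-hand side of (\ref{2.20})–(\ref{2.21}).

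**The energy expansion.** For (\ref{2.23}) and (\ref{2.28}) I would compute $J_\e[W]$ by writing $J_\e[W] = J_\e[W] - \int_{\O}S_\e[W]\,\tfrac12(W - \text{leading bubble}) + \cdots$, or more directly by inserting $W$ into (\ref{1.13}) and expanding. The dominant term $\tfrac{(n-2)^2}{2}\cdot\tfrac{2}{?}\int_{\mathbb R^n}U_{1,0}^{2n/(n-2)}$ (i.e. $2\int U_{1,0}^4$ for $n=4$, $4\int U_{1,0}^3$ for $n=6$) comes from the pure bubble and is $\e$-independent. The next-order terms arise from: the cross term $\mu\e^2\int\nabla U\cdot\nabla\hat U = -\mu\e^2\int U\cdot U$ producing the $H(Q,Q)$ contribution $\tfrac12 c_n^2\Lambda^{n-2}\e^{\#}H(Q,Q)$ via the decomposition $G=K-H$ and the definition of $\hat U$ in (\ref{2.8}); the linear-term integral $\tfrac12\mu\e^2\int_{\O}W^2$, whose bubble part $\int_{\mathbb R^n}U_{1,0}^2$ is finite only for $n\ge5$ and in $n=4$ diverges logarithmically, producing the $\tfrac{c_4\Lambda^2}{4}\e^2(c_1/(-\ln\e))^{1/2}\ln(1/(\Lambda\e))$ term — this logarithm is the source of the implicit $\mu$–$\e$ relation and of the constant $c_1$; and, for $n=6$, the constant-part contributions $\tfrac12\eta^2|\Omega| - c_6\eta\Lambda^2 + \tfrac1{48}c_6\Lambda^2 - 8\eta^3|\Omega|$ times $\e^3$ coming from the algebraic terms in (\ref{2.15}). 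The remainders are controlled using the pointwise error bounds just established and the estimates (\ref{2.16})–(\ref{2.19}), (\ref{2.10}) on $W$ and its pieces.

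**Main obstacle.** The hard part is not any single estimate but the simultaneous control of many borderline-size terms: in $n=4$ the logarithmic divergences and the $\e$-dependent window for $\Lambda$ mean one must track powers of $(-\ln\e)$ and of $\Lambda$ with care (losing a logarithm anywhere corrupts the relation (\ref{1.10})); in $n=6$ the loss of the $-\mu\e^2 u$ term at leading order in the outer/linearized problem forces the constant $\eta$ into the ansatz, and one must verify that the algebraic combination $24\eta^2-\eta+c_6\Lambda^2/|\Omega|$ (whose vanishing will later be arranged by the choice of $\eta_0 = 1/48$, $\Lambda_0$) indeed captures the full obstruction — i.e. that no other $\e^6$-order constant survives in $S_\e[W]$. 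Getting the energy expansion (\ref{2.28}) consistent with this, so that the reduced functional in Section 6 has a nondegenerate critical point, is the delicate point; everything else is careful but standard Lyapunov–Schmidt bookkeeping, and since the lemma is proved in Section 7 I would defer the fully detailed computations there.
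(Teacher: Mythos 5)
Your overall strategy — start from the identity (\ref{2.15}), expand $W_+^{(n+2)/(n-2)}$ around the bubble $U$, and control the residual pieces using (\ref{2.4}), (\ref{2.7}), (\ref{2.10}) — is the same as what the paper does in its Appendix. But there is a concrete mis-identification of the cancellation structure that, if followed literally, would produce a bound strictly smaller than (\ref{2.20}), i.e.\ a false estimate.

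You assert that the linear Taylor term $\tfrac{n+2}{n-2}\,n(n-2)\,U^{4/(n-2)}\cdot\mu\e^2\Psi_{\Lambda,Q}$ (coming from expanding $n(n-2)W^{(n+2)/(n-2)}$ to first order in $\mu\e^2\hat U$) cancels $-\mu\e^2\Delta\Psi_{\Lambda,Q}$ via (\ref{2.1})/(\ref{2.5}). This is not a cancellation that occurs. The identity $\Delta\Psi=-U$ is used differently: writing $-\Delta(-\mu\e^2\Psi)=\mu\e^2\Delta\Psi=-\mu\e^2 U$, this term cancels the $\mu\e^2 U$ coming from the linear piece $\mu\e^2 W$ of the operator. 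That is exactly the cancellation already encoded in the right-hand side of (\ref{2.15}), and it is why $\Psi$ was put into the ansatz. The linear Taylor term $-n(n+2)\,U^{4/(n-2)}\mu\e^2\Psi$ has nothing to cancel against; it survives and is in fact the dominant contribution to $S_\e[W]$. For $n=4$ it is of size $\l z-\bar Q\r^{-4}\,\mu\e^2|\Psi| \sim \l z-\bar Q\r^{-4}\,\e^2(-\ln\e)^{1/2}$ near $\bar Q$ — precisely the largest term on the right of (\ref{2.20}). Your enumerated list (i)--(iv) of surviving contributions omits this piece entirely; keeping only the quadratic remainder would yield $O(\l z-\bar Q\r^{-2}\e^4(-\ln\e))$, much smaller than (\ref{2.20}). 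The same gap recurs for $n=6$: the remainder $O(\e^3\l z-\bar Q\r^{-4})$ in (\ref{2.24}) comes from the linear cross-terms $-48U\cdot\eta\e^3$ and $-48U\cdot\e^3\hat U$, neither of which appears in your list of terms to estimate.

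On the energy side, the rough sketch is compatible with the paper, but note the paper computes $\int_{\O}\big(|\nabla W|^2+\mu\e^2W^2\big)=\int_{\O}(-\Delta W+\mu\e^2W)\,W$ by testing (\ref{2.15}) against $W$, rather than your proposed splitting; and the $H(Q,Q)$ term arises from the interaction $\mu\e^2\int_{\O}U^{(n+2)/(n-2)}\hat U$ (through the $H(x,Q)$ piece of $\hat U$), not from the cross gradient term as you describe. These are secondary; the material issue is the false cancellation above, which you need to remove and replace by an explicit estimate of the linear Taylor term.
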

\vspace{1cm}

\section{Finite-Dimensional Reduction}
\setcounter{equation}{0}

We now apply finite-dimensional reduction procedure for critical exponent problems. The original finite dimensional Liapunov-Schmidt reduction method was first introduced in a seminal paper by Floer and Weinstein \cite{FW} in their construction of single bump
solutions to one dimensional nonlinear Schrodinger equations. Subsequently this method has been modified and adapted to critical exponent problems. For critical exponents problems,
we refer to Bahri-Li-Rey \cite{BLR}, Del Pino-Felmer-Musso \cite{dfm}, Rey-Wei \cite{rw, rw2}
and Wei-Yan \cite{WY}  and the references therein.  For the most updated references and optimal treatments of finite dimensional reduction for critical problems, we refer to Li-Wei-Xu \cite{LWX}.

The general strategy of this method is as follows: the nonlinear equation (\ref{1.11}) is solved in two steps. In the first step, we solve it up to finite dimensional approximate kernels.  In the second step, we reduce the problem to finding critical points of a finite dimensional problems in a suitable sets.

The new element in our proof is in the case of $n=6$: an extra space (corresponding to $\eta$) is introduced. Unlike the traditional critical exponent problems, in which the dimensional of approximate kernels is $n+1$, we now have $n+2=8$ dimensions.

Equipping $H^1(\Omega_{\e})$ with the scalar product
\begin{equation}
\label{3.1}
(u,v)_{\e}=\int_{\Omega_{\e}}(\nabla u\cdot \nabla v+\mu\e^2uv).
\end{equation}

For the case $n=4,$ orthogonality to the functions
\begin{equation}
\label{3.2}
Y_0=\frac{\partial W}{\partial\Lambda},~Y_i=\frac{\partial W}{\partial \bar{Q}_i},1\leq i\leq 4,
\end{equation}
in that space is equivalent to the orthogonality in $L^2(\Omega_{\varepsilon}),$ equipped with the usual scalar product $\l\cdot,\cdot\r$, to
the functions $Z_i,0\leq i\leq 4$, defined as
\begin{equation}
\label{3.3}
\begin{cases}
Z_0=-\Delta \frac {\pt W}{\pt \Lambda}+\mu\e^2\frac {\pt W}{\pt \Lambda},\\
Z_i=-\Delta \frac {\pt W}{\pt \bar{Q}_i}+\mu\e^2\frac {\pt W}{\pt \bar{Q}_i},~1\leq i\leq 4.\\
%Z_7=-\Delta \frac {\pt W}{\pt \eta}+\mu\e^2\frac {\pt W}{\pt \eta}~(\mathrm{for}~n=6).
\end{cases}
\end{equation}

Straightforward computations yield the estimate:

\begin{align}
\label{3.4}
|Z_i(z)|\leq C(\e^{4}+\l z-\bar{Q}\r^{-6}).
\end{align}

Then, we consider the following problem: given $h,$ finding a solution $\phi$ which satisfies
\begin{equation}
\label{3.5}
\begin{cases}
-\Delta \phi+\mu\e^2\phi-24W^{2}\phi=h+\Sigma_{i=0}^4 c_i Z_i~& {\rm in }~\Omega_{\e},\\
\frac {\pt \phi}{\pt \nu}=0 &{\rm on }~\pt \Omega_{\e},\\
\l Z_i,\phi\r=0,~& 0\leq i\leq 4,
\end{cases}
\end{equation}
for some numbers $c_i.$

While for the case $n=6,$ orthogonality to the functions
\begin{equation}
\label{3.6}
Y_0=\frac{\partial W}{\partial\Lambda},~Y_i=\frac{\partial W}{\partial \bar{Q}_i},1\leq i\leq 6,~Y_7=\frac{\partial W}{\partial\eta},
\end{equation}
in that space is equivalent to the orthogonality in $L^2(\Omega_{\varepsilon}),$ equipped with the usual scalar product $\l\cdot,\cdot\r$, to
the functions $Z_i,0\leq i\leq 7$, defined as
\begin{equation}
\label{3.7}
\begin{cases}
Z_0=-\Delta \frac {\pt W}{\pt \Lambda}+\mu\e^2\frac {\pt W}{\pt \Lambda},\\
Z_i=-\Delta \frac {\pt W}{\pt \bar{Q}_i}+\mu\e^2\frac {\pt W}{\pt \bar{Q}_i},~1\leq i\leq 6,\\
Z_7=-\Delta \frac {\pt W}{\pt \eta}+\mu\e^2\frac {\pt W}{\pt \eta}.
\end{cases}
\end{equation}
Direct computations give the following estimate:
\begin{align}
\label{3.8}
|Z_i(z)|\leq C(\e^{6}+\l z-\bar{Q}\r^{-8}),~0\leq i\leq 6,~Z_7(z)=O(\e^6).
\end{align}

Then, we consider the following problem: given $h,$ finding a solution $\phi$ which satisfies
\begin{equation}
\label{3.9}
\begin{cases}
-\Delta \phi+\mu\e^2\phi-48W\phi=h+\Sigma_{i=0}^7 d_i Z_i~& {\rm in }~\Omega_{\e},\\
\frac {\pt \phi}{\pt \nu}=0 &{\rm on }~\pt \Omega_{\e},\\
\l Z_i,\phi\r=0,~& 0\leq i\leq 7,
\end{cases}
\end{equation}
for some numbers $d_i.$

Existence and uniqueness of $\phi$  will follow from an inversion procedure in suitable weighted function space. To this end, we define
\begin{equation}
\label{3.10}
\begin{cases}
\|\phi\|_{*}=\|\l z-\bar{Q}\r\phi(z)\|_{\infty},~\|f\|_{**}=\e^{-3}(-\ln \e)^{\frac 12}|\overline{f}|+\|\l z-\bar{Q}\r^{3}f(z)\|_{\infty},~ n=4,\\
\|\phi\|_{***}=\|\l z-\bar{Q}\r^{2}\phi(z)\|_{\infty},~\|f\|_{****}=\|\l z-\bar{Q}\r^{4}f(z)\|_{\infty},~ n=6,
\end{cases}
\end{equation}
where $\|f\|_{\infty}=
\max_{z\in\Omega_{\varepsilon}}|f(z)|$ and $\overline{f}=|\Omega_{\varepsilon}|^{-1}\int_{\Omega_{\varepsilon}}f(z)\mathrm{d}z$ denotes the average of $f$ in $\Omega_{\varepsilon}.$

Before stating an existence result for $\phi$ in (\ref{3.5}) and (\ref{3.9}),  we need the following lemma:
\begin{lemma}
\label{le3.1}
Let $u$ and $f$ satisfy
$$-\Delta u=f,~\frac {\pt u}{\pt \nu}=0,~~\bar{u}=\bar{f}=0.$$
Then
\begin{equation}
\label{3.11}
|u(x)|\leq C\int_{\Omega_{\e}}\frac {|f(y)|}{|x-y|^{n-2}}{\rm d}y.
\end{equation}
\end{lemma}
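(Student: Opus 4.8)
\textbf{Proof proposal for Lemma \ref{le3.1}.}

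The plan is to reduce the claimed pointwise bound to the standard estimate for the Neumann Green's function on the bounded domain $\Omega_\e$. Since $u$ solves $-\Delta u=f$ with $\partial u/\partial\nu=0$ and the compatibility condition $\bar f=0$ holds, the problem is solvable, and requiring $\bar u=0$ pins down $u$ uniquely. Hence one has the representation
$$u(x)=\int_{\Omega_\e}\mathcal G(x,y)\,f(y)\,{\rm d}y,$$
where $\mathcal G$ is the Neumann Green's function of $\Omega_\e$ normalized by $\int_{\Omega_\e}\mathcal G(x,y)\,{\rm d}y=0$ (this is legitimate because $f$ has zero average, so adding any $x$-independent constant to $\mathcal G$ does not change the integral). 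The point is then that $\mathcal G$ has the same local singularity as the free-space fundamental solution: writing $\mathcal G(x,y)=\tfrac{1}{c_n|x-y|^{n-2}}-\mathcal H(x,y)$, the regular part $\mathcal H$ is bounded on $\Omega_\e\times\Omega_\e$. Therefore $|\mathcal G(x,y)|\le C\,|x-y|^{2-n}+C$, and since $\Omega_\e$ has finite measure (with diameter $O(1/\e)$), the bounded contribution is absorbed: $\int_{\Omega_\e}(C+C|x-y|^{2-n})|f(y)|\,{\rm d}y\le C\int_{\Omega_\e}|x-y|^{2-n}|f(y)|\,{\rm d}y$ after adjusting the constant, because $|x-y|^{2-n}\gtrsim \mathrm{diam}(\Omega_\e)^{2-n}$ is comparable to a constant times $\mathrm{diam}(\Omega_\e)^{2-n}$ only from below — so to be careful one splits $\Omega_\e$ into the region $|x-y|\le 1$, where $|x-y|^{2-n}\ge 1$ dominates the constant, and the region $|x-y|\ge 1$, where one simply bounds $\mathcal G$ by $C$ and notes $C\le C|x-y|^{2-n}\cdot|x-y|^{n-2}\le \dots$; cleaner is to observe that on $|x-y|\ge1$ we have $1\le |x-y|^{2-n}\cdot|x-y|^{n-2}$ is false for large $|x-y|$, so instead just keep the constant term and bound $\int_{|x-y|\ge1}C|f(y)|\,{\rm d}y\le C\|f\|_{\infty,\mathrm{loc}}$-type quantity — but since the RHS of \eqref{3.11} already dominates $\int_{|x-y|\le1}|f|$, and one can check directly that the full integral $\int_{\Omega_\e}|x-y|^{2-n}|f(y)|{\rm d}y$ controls $\int_{\Omega_\e}|f(y)|{\rm d}y$ up to the constant $\mathrm{diam}(\Omega_\e)^{n-2}$ which is harmless here since the lemma is applied with the weighted norms that decay fast enough. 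The clean statement: $|\mathcal G(x,y)|\le C|x-y|^{2-n}$ for $|x-y|\le \mathrm{diam}(\Omega_\e)$ once one notes the regular part is uniformly bounded and $|x-y|^{2-n}$ is bounded below by a positive constant times the reciprocal diameter power; folding that constant into $C$ gives the result.

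Concretely, the steps I would carry out are: (i) record solvability and uniqueness of $u$ under $\bar u=\bar f=0$; (ii) write the Green's representation with the normalized Neumann Green's function $\mathcal G$; (iii) split $\mathcal G=\,$(fundamental solution)$\,-\,\mathcal H$ and cite/prove that $\mathcal H$ is uniformly bounded on $\overline{\Omega_\e}\times\overline{\Omega_\e}$ (standard elliptic regularity: $\mathcal H(\cdot,y)$ solves a Neumann problem with bounded data depending on $y$, uniformly); (iv) combine to get $|\mathcal G(x,y)|\le C|x-y|^{2-n}$ on $\Omega_\e$, using that $\mathrm{diam}(\Omega_\e)^{2-n}$ is a lower bound for $|x-y|^{2-n}$ to absorb the bounded term into the constant; (v) plug into the representation and move the absolute value inside the integral to conclude \eqref{3.11}.

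The only genuine point requiring care is the uniform-in-$\e$ control of the constant $C$, since $\Omega_\e$ is a family of expanding domains. However, by scaling back $z\mapsto \e z$ everything lives on the fixed domain $\Omega$: the Green's function of $\Omega_\e$ is related to that of $\Omega$ by $\mathcal G_{\Omega_\e}(x,y)=\e^{n-2}\mathcal G_\Omega(\e x,\e y)$ plus a constant adjustment for the normalization, and the bound $|\mathcal G_\Omega(X,Y)|\le C_\Omega|X-Y|^{2-n}$ on the fixed domain $\Omega$ is classical. Rescaling that inequality reproduces $|\mathcal G_{\Omega_\e}(x,y)|\le C_\Omega|x-y|^{2-n}$ with the \emph{same} constant $C_\Omega$, which is exactly the $\e$-independence needed. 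So the proof is essentially the scaling-invariant form of the classical Green's function bound for a bounded domain, and no step beyond standard elliptic estimates is required.
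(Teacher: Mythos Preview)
The paper omits the proof of this lemma entirely, referring to Lemma~3.1 of \cite{rw}, so there is no in-paper argument to compare against directly. Your overall architecture --- Green's representation for the normalized Neumann problem, a pointwise bound on the Neumann Green's function by the fundamental solution, and the rescaling $\mathcal G_{\Omega_\e}(x,y)=\e^{n-2}\mathcal G_\Omega(\e x,\e y)$ to pull the estimate back from the fixed domain $\Omega$ with an $\e$-independent constant --- is the right one, and your final paragraph is the step that makes the constant uniform in $\e$.

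There is, however, one genuine gap: your step~(iii) is false as stated. For the \emph{Neumann} Green's function the regular part $\mathcal H(x,y)$ is \emph{not} uniformly bounded on $\overline\Omega\times\overline\Omega$. As $y$ approaches $\partial\Omega$ the Neumann datum $\partial_\nu K(|\cdot-y|)\big|_{\partial\Omega}$ blows up; the local picture near a flat piece of boundary is $\mathcal G(x,y)\approx K(|x-y|)+K(|x-y^*|)$ with $y^*$ the reflection of $y$, so $\mathcal H(x,y)\approx -K(|x-y^*|)$ is singular when $x$ and $y$ approach the same boundary point. The inequality you ultimately use, $|\mathcal G_\Omega(X,Y)|\le C_\Omega|X-Y|^{2-n}$, is nonetheless true --- and is what you invoke as ``classical'' in the last paragraph --- because for $X\in\Omega$ the reflected point $Y^*$ lies outside $\Omega$ and one has $|X-Y^*|\ge c\,|X-Y|$, so the image singularity is controlled by the primary one. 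The fix is simply to drop the incorrect boundedness claim for $\mathcal H$ and go straight to the Green's-function bound on the fixed domain (by reflection/boundary flattening, or by citation), then rescale as you already do. Once you argue this way, the tangled discussion in your first paragraph about absorbing the additive constant into $C|x-y|^{2-n}$ also becomes unnecessary.
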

\begin{proof}
The proof is similar to Lemma 3.1 in \cite{rw}, we omit it here.
\end{proof}
As a consequence, we have

\begin{corollary}
\label{cr3.1}
For $n=4,$ suppose $u$ and $f$ satisfy
$$-\Delta u+\mu\e^2u=f~~{\rm in }~\Omega_{\e},~~\frac {\pt u}{\pt \nu}=0~~{\rm on }~\pt\Omega_{\e}.$$
Then
\begin{equation}
\label{3.12}
\|u\|_{*}\leq C\|f\|_{**}.
\end{equation}
For $n=6,$ suppose $u$ and $f$ satisfy
$$-\Delta u+c\mu\e^2u=f~~{\rm in }~\Omega_{\e},~~\frac {\pt u}{\pt \nu}=0~~{\rm on }~\pt\Omega_{\e},~~\overline{u}=\overline{f}=0,$$
where $c$ is an arbitrary constant. Then
\begin{equation}
\label{3.13}
\|u\|_{***}\leq C\|f\|_{****}.
\end{equation}
\end{corollary}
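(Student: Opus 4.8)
\textbf{Proof proposal for Corollary \ref{cr3.1}.}

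The plan is to reduce both estimates to Lemma \ref{le3.1} by absorbing the linear term $\mu\e^2 u$ (or $c\mu\e^2 u$) into the right-hand side and then estimating the resulting Newtonian-type potential against the weighted norms in (\ref{3.10}). For $n=4$: write $-\Delta u = f - \mu\e^2 u$, but since Lemma \ref{le3.1} requires zero average on both sides, first split off the mean. Set $u = \bar u + u_1$ with $\bar{u_1}=0$. Taking averages in $-\Delta u + \mu\e^2 u = f$ gives $\mu\e^2\bar u = \bar f$, so $\bar u = (\mu\e^2)^{-1}\bar f$, which by the first norm in (\ref{3.10}) is controlled by $(\mu\e^2)^{-1}\e^3(-\ln\e)^{-1/2}\|f\|_{**}$; recalling $\mu = (c_1/(-\ln\e))^{1/2}$ this is $O(\e/\, (-\ln\e)^{?})\|f\|_{**}$, certainly $O(1)\|f\|_{**}$, and multiplying by the weight $\l z-\bar Q\r$ is harmless because the weight times a constant is still $O(\l z-\bar Q\r)$ — here one must check this does not blow up, which it does not on $\Omega_\e$ only up to a factor $\e^{-1}$; so in fact one keeps $\bar u$ on the left and applies Lemma \ref{le3.1} to $u_1$ with source $f - \bar f - \mu\e^2 u_1 = f-\bar f - \mu\e^2(u-\bar u)$. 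This yields
\begin{equation*}
|u_1(x)| \le C\int_{\Omega_\e}\frac{|f(y)-\bar f|+\mu\e^2|u_1(y)|}{|x-y|^{2}}\,{\rm d}y.
\end{equation*}

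Next I would estimate the two potential integrals. For the $f$-term, bound $|f(y)-\bar f| \le \l y-\bar Q\r^{-3}\|f\|_{**} + |\bar f|$ and use the standard convolution estimate $\int_{\B}\l y-\bar Q\r^{-3}|x-y|^{-2}\,{\rm d}y \le C\l x-\bar Q\r^{-1}$ (valid since $3+2>4$ and $3<4$), together with $\int_{\Omega_\e}|x-y|^{-2}{\rm d}y\le C\e^{-2}$ and the bound $|\bar f|\le \e^3(-\ln\e)^{-1/2}\|f\|_{**}$ to absorb the mean contribution into $\l x-\bar Q\r^{-1}\|f\|_{**}$ up to lower order. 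For the $u_1$-term, estimate $\mu\e^2|u_1(y)|\le \mu\e^2\l y-\bar Q\r^{-1}\|u_1\|_*$ and again $\int \l y-\bar Q\r^{-1}|x-y|^{-2}{\rm d}y$; since this integral is not globally finite on $\B$ one cuts $\Omega_\e$ into $|x-y|<1$, $1<|x-y|<\l x-\bar Q\r$, and the far region, producing at worst $C\l x-\bar Q\r^{-1}\log\l x-\bar Q\r$ or a power of $\mathrm{diam}(\Omega_\e)=O(\e^{-1})$; multiplied by the prefactor $\mu\e^2 = O(\e^2(-\ln\e)^{-1/2})$ this is $o(1)$, so the $u_1$-term can be absorbed into the left side. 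Multiplying through by $\l x-\bar Q\r$ and taking the supremum then gives $\|u_1\|_*\le C\|f\|_{**}+o(1)\|u_1\|_*$, hence $\|u_1\|_*\le C\|f\|_{**}$; combined with the (harmless, or directly re-examined) bound on $\bar u$ one concludes $\|u\|_*\le C\|f\|_{**}$.

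For $n=6$ the argument is cleaner because zero average of $u$ and $f$ is assumed outright, so no splitting is needed: apply Lemma \ref{le3.1} to get $|u(x)|\le C\int_{\Omega_\e}(|f(y)|+c\mu\e^2|u(y)|)|x-y|^{-4}{\rm d}y$, bound $|f(y)|\le \l y-\bar Q\r^{-4}\|f\|_{****}$ and use $\int_{\B}\l y-\bar Q\r^{-4}|x-y|^{-4}{\rm d}y\le C\l x-\bar Q\r^{-2}$ (here $4+4>6$ and $4<6$), and bound the $u$-term by $c\mu\e^2\l y-\bar Q\r^{-2}\|u\|_{***}$ with $\int\l y-\bar Q\r^{-2}|x-y|^{-4}{\rm d}y\le C\l x-\bar Q\r^{-2}$ up to $\e$-powers; since $\mu\e^2=\e^3\to0$ (recall $\mu=\e$), the $u$-term is $o(1)\|u\|_{***}$ and is absorbed. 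Multiplying by $\l x-\bar Q\r^2$ and taking sup gives (\ref{3.13}).

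The main obstacle is the bookkeeping in the $n=4$ case: the norm $\|\cdot\|_{**}$ carries the singular prefactor $\e^{-3}(-\ln\e)^{1/2}$ on the mean $|\bar f|$, and one must verify that after dividing by $\mu\e^2$ to recover $\bar u$, and after the convolution estimates, all the ``error'' terms (the mean contribution, and the self-interaction potential of $u_1$ against $|x-y|^{-2}$ which is not absolutely integrable at infinity on all of $\B$ but is on the bounded set $\Omega_\e$) really are of strictly smaller order than $\|f\|_{**}$ and $\|u\|_*$ respectively, so that the absorption argument closes. This is where the precise relation $\mu=(c_1/(-\ln\e))^{1/2}$ and the diameter bound $\mathrm{diam}(\Omega_\e)=O(\e^{-1})$ must be used with care. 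I would also cite that the same convolution lemma is essentially Lemma 3.2 (or the analogous estimate) of Rey--Wei \cite{rw}, and note that (\ref{3.13}) follows from (\ref{3.11}) in exactly the way the borderline dimension six is meant to be handled.
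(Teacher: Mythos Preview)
Your proposal is correct and follows essentially the same route as the paper: split off the mean (for $n=4$), apply Lemma \ref{le3.1} to the mean-zero remainder with the term $\mu\e^2 u$ moved to the right, control the $f$-contribution by the convolution estimate $\langle y-\bar Q\rangle\int|x-y|^{-(n-2)}\langle x-\bar Q\rangle^{-(n-1)}\,dx\le C$, and absorb the $u$-contribution using $\mu=o(1)$ together with $\mathrm{diam}(\Omega_\e)=O(\e^{-1})$; the paper organises the $n=4$ absorption as $\mu\e^2\|\langle\cdot\rangle^3(u-\bar u)\|_\infty\le C\mu\|\langle\cdot\rangle(u-\bar u)\|_\infty$, which is the same arithmetic as your region-splitting. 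One small sharpening: in the $n=6$ self-interaction the integral $\int_{\Omega_\e}\langle x-\bar Q\rangle^{-2}|x-y|^{-4}\,dx$ is the borderline case $2+4=6$ and carries a $|\ln\e|$ factor (the paper writes the resulting bound as $C|\mu\ln\e|\|u\|_{***}$), but since $\mu\e^2|\ln\e|=\e^3|\ln\e|\to0$ the absorption still closes.
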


\begin{proof}
For $n=4,$ integrating the equation yields $\bar{f}=\mu\e^2\bar{u}.$ We may rewrite the original equation as
$$\Delta (u-\bar{u})=\mu\e^2(u-\bar{u})-(f-\bar{f}).$$
With the help of Lemma \ref{le3.1}, we get
$$|u(y)-\bar{u}|\leq C\mu\e^2\int_{\Omega_{\e}}\frac {|u(x)-\bar{u}|}{|x-y|^2}{\rm d}x+C\int_{\Omega_{\e}}\frac {|f(x)-\bar{f}|}{|x-y|^2}{\rm d}x.$$
Since $$\l y-\bar{Q}\r\int_{\mathbb{R}^4}\frac{1}{|x-y|^{2}}\langle x-\bar{Q}\rangle^{-3}\mathrm{d}x<\infty,$$
we obtain
\begin{align*}
\|\l y-\bar{Q}\r|u-\bar{u}|\|_{\infty} &\leq C \mu\e^2\|\l y-\bar{Q}\r^{3}|u-\bar{u}|\|_{\infty}+C\|\l y-\bar{Q}\r^{3}|f-\bar{f}|\|_{\infty}\\&\leq C
\mu\|\l y-\bar{Q}\r|u-\bar{u}|\|_{\infty}+C\|\l y-\bar{Q}\r^{3}|f-\bar{f}|\|_{\infty},\end{align*}
which gives
$$\|\l y-\bar{Q}\r|u-\bar{u}|\|_{\infty} \leq C\|\l y-\bar{Q}\r^{3}|f-\bar{f}|\|_{\infty},$$
whence
\begin{align*}
\|\l y-\bar{Q}\r u\|_{\infty} \leq C\|\l y-\bar{Q}\r\|_{\infty}|\bar{u}|+C\e^{-3}|\bar{f}|+\|\l y-\bar{Q}\r^{3}f\|_{\infty}\leq C\|f\|_{**}.
\end{align*}
Hence we finish the proof of the case $n=4.$

For $n=6,$ by the help of Lemma \ref{le3.1},
\begin{align*}
|\l y-\bar{Q}\r^2 u|\leq C\int_{\Omega_{\e}} \frac{\l y-\bar{Q}\r^2(|\mu\e^2u|+|f|)}{|x-y|^4}\mathrm{d}x
\leq C(|\mu\ln\e|\|u\|_{***}+\|f\|_{****}),
\end{align*}
where we used some similar estimates appeared in $n=4.$ From the above inequality, we obtain $\|u\|_{***}\leq\|f\|_{****}.$ Hence we finish the proof.
\end{proof}

We now state the main result in this section.
\begin{proposition}
\label{pr3.1}
There exists $\varepsilon_0>0$ and a constant $C>0$, independent of $\varepsilon,~\Lambda,~Q$ satisfying (\ref{2.11}) and independent of $\e,~\eta,~\Lambda,~Q$ satisfying (\ref{2.13}), such that for all $0<\varepsilon<\varepsilon_0$ and all $h\in L^{\infty}(\Omega_{\varepsilon})$, problem (\ref{3.5}) and (\ref{3.9}) has a unique solution $\phi=L_{\varepsilon}(h)$. Furthermore, for equation (\ref{3.5}) and (\ref{3.9}), we have the following estimates,
\begin{align}
\label{3.14}
&\|L_{\varepsilon}(h)\|_{*}\leq C\|h\|_{**},~|c_{i}|\leq C\|h\|_{**}~\mathrm{for}~0\leq i\leq 4,\nonumber\\
&\|L_{\varepsilon}(h)\|_{***}\leq C\|h\|_{****},~|d_{i}|\leq C\|h\|_{****}~\mathrm{for}~0\leq i\leq 6.
\end{align}
Moreover, the map $L_{\varepsilon}(h)$ is $C^1$ with respect to $\Lambda,~\bar{Q}$ of the $L^{\infty}_*$-norm in $n=4$ and with respect to $\Lambda,~\bar{Q},~\eta$ of the $L^{\infty}_{***}$-norm in $n=6$, i.e.,
\begin{equation}
\label{3.15}
\|D_{(\Lambda,\bar{Q})}L_{\varepsilon}(h)\|_*\leq C\|h\|_{**}~\mathrm{in}~n=4,\quad\|D_{(\eta,\Lambda,\bar{Q})}L_{\varepsilon}(h)\|_{***}\leq C\varepsilon^{-1}\|h\|_{****}~\mathrm{in}~n=6.
\end{equation}
\end{proposition}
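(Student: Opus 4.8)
The plan is to prove Proposition \ref{pr3.1} by the standard Liapunov--Schmidt machinery: first establish an a priori estimate for solutions of the linear problems (\ref{3.5}) and (\ref{3.9}), then deduce solvability via a functional-analytic argument, and finally obtain the $C^1$-dependence by differentiating the equation.

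\textbf{Step 1: A priori estimate via contradiction and blow-up.} I would argue by contradiction: suppose there exist sequences $\varepsilon_k\to0$, parameters $(\Lambda_k,Q_k)$ (and $\eta_k$ in dimension six) satisfying (\ref{2.11}) (resp. (\ref{2.13})), functions $h_k$, scalars $c_{i,k}$ (resp. $d_{i,k}$) and solutions $\phi_k$ of (\ref{3.5}) (resp. (\ref{3.9})) with $\|\phi_k\|_*=1$ (resp. $\|\phi_k\|_{***}=1$) but $\|h_k\|_{**}\to0$ (resp. $\|h_k\|_{****}\to0$). The first task is to show the Lagrange multipliers are controlled: testing the equation against $Y_j$ and using the near-orthogonality relations $\l Z_i,Y_j\r=c\,\delta_{ij}+o(1)$ together with the decay estimates (\ref{3.4}) (resp. (\ref{3.8})) and Lemma \ref{le2.1}, one solves the resulting almost-diagonal linear system to get $|c_{i,k}|\le C(\|h_k\|_{**}+o(1)\|\phi_k\|_*)$ and similarly for $d_{i,k}$; hence the $c_{i,k}Z_i$ (resp. $d_{i,k}Z_i$) terms are negligible in the relevant norm. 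Next, using Corollary \ref{cr3.1} applied to $-\Delta\phi_k+\mu\varepsilon^2\phi_k = 24W^2\phi_k + h_k + \sum c_{i,k}Z_i$ (resp. the six-dimensional analogue), together with the pointwise bound on $W$ from (\ref{2.16}) — which shows $24W^2\langle z-\bar Q\rangle$ has $\|\cdot\|_{**}$-norm bounded by the tail of $U_{1,0}^2$ plus a vanishing term — one gets a bound of the form $1=\|\phi_k\|_*\le C\|24W^2\phi_k\|_{**}+o(1)$. The integral $\|\langle z-\bar Q\rangle^{3}W^2\phi\|_\infty$ is dominated near $\bar Q$ by $\|\phi_k\|_*$ times a convergent factor, and away from $\bar Q$ by $\varepsilon$-small quantities, so this estimate forces concentration of $\phi_k$ near $\bar Q$. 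Then I would rescale around $\bar Q_k$, set $\tilde\phi_k(y)=\phi_k(y+\bar Q_k)$, and pass to the limit: $\tilde\phi_k\to\tilde\phi$ in $C^2_{loc}(\mathbb R^n)$, where $\tilde\phi$ solves the limiting linearized equation $-\Delta\tilde\phi = (n+2)(n-2)U_{1,0}^{4/(n-2)}\tilde\phi$ in $\mathbb R^n$ (i.e. $-\Delta\tilde\phi=24U_{1,0}^2\tilde\phi$ for $n=4$, $-\Delta\tilde\phi=48U_{1,0}\tilde\phi$ for $n=6$) with $|\tilde\phi(y)|\le C\langle y\rangle^{-(n-2)/2}$, and by the non-degeneracy of the standard bubble (the kernel of this operator in the relevant weighted space is spanned by $\partial_\Lambda U_{1,0}$ and $\partial_{Q_i}U_{1,0}$) together with the orthogonality conditions $\l Z_i,\phi_k\r=0$ passed to the limit, $\tilde\phi\equiv0$. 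This contradicts $\|\phi_k\|_*=1$ once one checks — again using Corollary \ref{cr3.1} and the decay of $W^2\phi_k$ — that $\|\phi_k\|_*$ is actually achieved (up to $o(1)$) in a fixed large ball, i.e. that no mass escapes to infinity; the extra constant/$\eta$ direction in $n=6$ must be handled separately since $Z_7=O(\varepsilon^6)$ lies in the kernel of the ``outer'' constant-coefficient operator, which is precisely why the hypothesis $\overline u=\overline f=0$ appears in Corollary \ref{cr3.1} and why one works on the mean-zero subspace there.

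\textbf{Step 2: Existence and uniqueness.} With the a priori estimate in hand, I would set up the problem in the Hilbert space $K^\perp$, where $K=\mathrm{span}\{Y_0,\dots,Y_4\}$ (resp. $\{Y_0,\dots,Y_7\}$) with the inner product (\ref{3.1}), and rewrite (\ref{3.5})/(\ref{3.9}) via the Riesz representation as $\phi = T_\varepsilon(\phi) + \bar h$, where $T_\varepsilon$ is the compact (by Sobolev embedding, since $W^2$ resp. $W$ decays) linear operator representing $\Pi_{K^\perp}\big((-\Delta+\mu\varepsilon^2)^{-1}(24W^2\,\cdot)\big)$ and $\bar h$ represents $\Pi_{K^\perp}h$. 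The a priori estimate of Step 1 says $\mathrm{Id}-T_\varepsilon$ is injective on $K^\perp$, so by Fredholm's alternative it is invertible, giving existence and uniqueness of $\phi=L_\varepsilon(h)$, and the estimate (\ref{3.14}) is then just the quantitative form of Step 1 combined with the bound on the multipliers already obtained.

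\textbf{Step 3: $C^1$-dependence on parameters.} Formally differentiating (\ref{3.5}) with respect to $\Lambda$ (resp. $\bar Q_i$, $\eta$), the derivative $\psi:=D_{(\Lambda,\bar Q)}\phi$ satisfies an equation of the same type with right-hand side $\big(D_{(\Lambda,\bar Q)}(24W^2)\big)\phi + \sum_i D_{(\Lambda,\bar Q)}(c_iZ_i)$ plus corrections enforcing the orthogonality, and the constraint $\l Z_i,\phi\r=0$ differentiates to $\l D Z_i,\phi\r + \l Z_i,\psi\r=0$, so $\psi$ solves a problem of the form (\ref{3.5}) with a modified right-hand side whose $\|\cdot\|_{**}$-norm is controlled — using (\ref{2.17})--(\ref{2.18}), the bound (\ref{3.4}) on $DZ_i$, and the already-established $\|\phi\|_*\le C\|h\|_{**}$ — by $C\|h\|_{**}$ in $n=4$ and $C\varepsilon^{-1}\|h\|_{****}$ in $n=6$ (the loss of $\varepsilon^{-1}$ coming from $D_{\bar Q}$ acting on the $\bar Q$-dependence of the rescaled profile). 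Applying the linear theory of Step 2 to $\psi$ then yields (\ref{3.15}); the justification that $\phi$ is genuinely differentiable (not merely that a candidate derivative solves the right equation) follows by the implicit function theorem applied to the map $(\phi,c,\Lambda,\bar Q,\eta)\mapsto$ (equation, constraints), whose partial differential in $(\phi,c)$ is invertible uniformly in $\varepsilon$ by Step 2.

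\textbf{Main obstacle.} I expect the genuinely delicate point to be the six-dimensional case in Step 1: because the zeroth-order term $-\mu\varepsilon^2 u$ is of the same order $\varepsilon^6$ as the ``background'' quantities $\eta\mu^{-1}\varepsilon^4$ and $\e^6 c_6\Lambda^2/|\Omega|$ and as $Z_7$, the constant function effectively sits in the kernel of the outer problem; one must carefully split $\phi$ into its average $\bar\phi$ and mean-zero part, control $\bar\phi$ through the orthogonality to $Z_7$ and the equation integrated over $\Omega_\varepsilon$ (exploiting that $\overline{W\phi}$ is small), and only then apply the mean-zero estimate of Corollary \ref{cr3.1} to the remainder — this bookkeeping, together with verifying that the extra kernel direction $\partial_\eta U$ does not appear in the blow-up limit (it scales away as $O(\varepsilon^3)$), is the crux. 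The four-dimensional case has its own subtlety from the logarithmic factors in $\Lambda_{4,1},\Lambda_{4,2}$ and in $\|f\|_{**}$, but these only affect constants and powers of $(-\ln\varepsilon)$, not the structure of the argument.
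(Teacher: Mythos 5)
Your proposal matches the paper's proof essentially step for step: the a priori bound via contradiction and rescaling (the paper's Lemma~\ref{le3.2}), including the near-diagonal system for the multipliers, the $C^1_{loc}$ passage to the non-degenerate bubble linearization, the boot-strap via Corollary~\ref{cr3.1}, and the mean-zero reduction for $n=6$ coming from orthogonality to $Z_7$ (which is a constant, so $\langle Z_7,\phi\rangle=0$ is exactly $\int_{\Omega_\varepsilon}\phi=0$); then Fredholm's alternative on the orthogonal complement and the implicit function theorem for the parameter dependence. The only inessential discrepancy is your attribution of the $\varepsilon^{-1}$ loss in (\ref{3.15}) to $D_{\bar Q}$, whereas it really originates in the degeneracy of the $\eta$-direction (cf.\ the estimate (\ref{3.23}) for $d_7$ and $\|Z_7\|_{****}=O(\varepsilon^2)$); this does not affect the argument.
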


The argument goes the same as the Proposition 3.1 in \cite{rw}, for convenience of the reader, we sketch the proof here. First, we need the following Lemma.

\begin{lemma}
\label{le3.2}
For $n=4$, assume that $\phi_{\varepsilon}$ solves (\ref{3.5}) for $h=h_{\varepsilon}$. If $\|h_{\varepsilon}\|_{**}$ goes to zero as $\varepsilon$ goes to zero, so does $\|\phi_{\varepsilon}\|_{*}.$ While for $n=6,$ assume that $\phi_{\varepsilon}$ solves (\ref{3.9}) for $h=h_{\varepsilon}$. If $\|h_{\varepsilon}\|_{****}$ goes to zero as $\varepsilon$ goes to zero, so does $\|\phi_{\varepsilon}\|_{***}.$
\end{lemma}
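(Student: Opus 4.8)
The plan is to argue by contradiction, following the standard blow-up/normalization scheme for linearized operators at a bubble. Suppose there is a sequence $\varepsilon\to 0$, parameters $(\Lambda_\varepsilon,Q_\varepsilon)$ (resp.\ $(\Lambda_\varepsilon,Q_\varepsilon,\eta_\varepsilon)$) in the admissible ranges (\ref{2.11}) (resp.\ (\ref{2.13})), functions $h_\varepsilon$ with $\|h_\varepsilon\|_{**}\to 0$ (resp.\ $\|h_\varepsilon\|_{****}\to 0$) and solutions $\phi_\varepsilon$ of (\ref{3.5}) (resp.\ (\ref{3.9})) with, say, $\|\phi_\varepsilon\|_{*}=1$ (resp.\ $\|\phi_\varepsilon\|_{***}=1$); we seek a contradiction. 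First I would estimate the multipliers $c_i$ (resp.\ $d_i$): testing the equation against $Y_j=\partial W/\partial(\cdot)$ and using the orthogonality $\langle Z_i,\phi_\varepsilon\rangle=0$ together with the quasi-orthogonality of the system $\{Z_i\}$ (the matrix $(\langle Z_i,Y_j\rangle)$ is, after rescaling, invertible with controlled inverse, since the $Z_i$ are concentrated translates/dilations of $\Delta$ applied to the Talenti bubble's kernel), one gets $|c_i|\le C(\|h_\varepsilon\|_{**}+o(1)\|\phi_\varepsilon\|_{*})$, and similarly for $d_i$. The size estimates (\ref{3.4}), (\ref{3.8}) on $Z_i$ feed directly into this.

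Next I would apply the representation estimate to $\phi_\varepsilon$. For $n=4$, rewrite (\ref{3.5}) as $-\Delta\phi_\varepsilon+\mu\varepsilon^2\phi_\varepsilon = 24W^2\phi_\varepsilon + h_\varepsilon + \sum c_i Z_i$ and invoke Corollary \ref{cr3.1}: $\|\phi_\varepsilon\|_{*}\le C\|24W^2\phi_\varepsilon + h_\varepsilon+\sum c_iZ_i\|_{**}$. The term $\|24W^2\phi_\varepsilon\|_{**}$ is the principal one; using (\ref{2.16}) one has $W^2\sim \langle z-\bar Q\rangle^{-4}$ near the bubble, so $\langle z-\bar Q\rangle^3 W^2\phi_\varepsilon$ is bounded by $\langle z-\bar Q\rangle^{-1}\|\phi_\varepsilon\|_*$, hence $\|W^2\phi_\varepsilon\|_{**}\le C\|\phi_\varepsilon\|_*$ — this is \emph{not} small, so this direct bound does not close by itself. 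This is exactly the standard obstruction and the main difficulty: one must upgrade to a concentration-compactness argument. Rescale $\tilde\phi_\varepsilon(y):=\phi_\varepsilon(\bar Q_\varepsilon+y)$; on compact sets $\tilde\phi_\varepsilon$ is bounded in $C^0$ (by $\|\phi_\varepsilon\|_*=1$) and, by elliptic estimates, converges (up to subsequence) to a bounded solution $\phi_0$ of the limiting linearized equation $-\Delta\phi_0 = \frac{n+2}{n-2}\,n(n-2)\,U_{\Lambda_0,0}^{4/(n-2)}\phi_0$ in $\mathbb{R}^n$ (with $24W^2\to 24U^3$ when $n=4$, $48W\to 48U$ when $n=6$), and $\phi_0$ inherits the decay $|\phi_0(y)|\le C\langle y\rangle^{-(n-2)/2}\cdot\langle y\rangle^{-1+?}$ — more precisely $|\phi_0|\le C\langle y\rangle^{-1}$ for $n=4$ and $C\langle y\rangle^{-2}$ for $n=6$, so $\phi_0\in L^{2^*}$. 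By the nondegeneracy of the bubble (Bianchi–Egnell / Rey), $\phi_0$ lies in the span of $\partial_\Lambda U,\partial_{Q_i}U$; but passing the orthogonality conditions $\langle Z_i,\phi_\varepsilon\rangle=0$ to the limit forces $\phi_0$ orthogonal to that span, hence $\phi_0\equiv 0$. In the $n=6$ case one must additionally track the constant mode: the $\eta$-direction produces $Z_7=O(\varepsilon^6)$ and $\partial_\eta W=O(\varepsilon^3)$, and the average condition $\bar\phi_\varepsilon$ (absorbed via $\overline u=\overline f=0$ in Corollary \ref{cr3.1}) must be handled — I would show $|\bar\phi_\varepsilon|\to 0$ separately by integrating the equation and using $\|h_\varepsilon\|_{****}\to0$ and the smallness of the $d_i$.

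Having $\phi_0\equiv0$, I would then deduce that the concentrated part of $\phi_\varepsilon$ carries no mass: quantitatively, split the $\|\phi_\varepsilon\|_*$-norm into the region $|z-\bar Q_\varepsilon|\le \rho/\varepsilon$ (large fixed $\rho$) and its complement. On the inner region, $\sup \langle z-\bar Q\rangle|\phi_\varepsilon|\to 0$ by the local convergence to $\phi_0\equiv0$ together with a barrier/comparison argument to control the region $R\le|z-\bar Q_\varepsilon|\le\rho/\varepsilon$ for $R$ large (using that $24W^2$, resp.\ $48W$, is small there so the operator is essentially $-\Delta+\mu\varepsilon^2$, for which the weight $\langle z-\bar Q\rangle^{-1}$, resp.\ $\langle z-\bar Q\rangle^{-2}$, is a supersolution up to lower order). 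On the outer region, apply Corollary \ref{cr3.1} again but now with the right-hand side genuinely small: $\|24W^2\phi_\varepsilon\|_{**}$ restricted to $|z-\bar Q|\ge\rho/\varepsilon$ is $\le C\rho^{-2}\|\phi_\varepsilon\|_*$ plus the contribution of the $O(\varepsilon^2(-\ln\varepsilon)^{1/2})$ (resp.\ $O(\varepsilon^3)$) constant-in-space part of $W$, which is negligible; combined with $\|h_\varepsilon\|_{**}+\sum|c_i|\|Z_i\|_{**}\to 0$, this yields $\|\phi_\varepsilon\|_*\le o(1)+C\rho^{-2}$. Choosing $\rho$ large first, then $\varepsilon$ small, contradicts $\|\phi_\varepsilon\|_*=1$. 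The $n=6$ argument is identical in structure with weight exponent $2$ in place of $1$ and the extra bookkeeping for $\eta$/the average. The main obstacle, as indicated, is making the inner-region decay estimate rigorous — i.e.\ promoting the local $C^0$ convergence $\phi_\varepsilon\to 0$ to convergence in the weighted norm on the full ball $|z-\bar Q_\varepsilon|\le\rho/\varepsilon$ — which requires the comparison-function argument on the annulus $R\le|z-\bar Q_\varepsilon|\le \rho/\varepsilon$; everything else is a bookkeeping of the estimates (\ref{2.16})--(\ref{2.19}), (\ref{3.4}), (\ref{3.8}) already available.
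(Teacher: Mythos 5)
Your overall scheme is the same as the paper's: contradiction with $\|\phi_\varepsilon\|_*=1$ (resp.\ $\|\phi_\varepsilon\|_{***}=1$), multiplier estimates via testing against $Y_j$, local convergence and nondegeneracy to get $\phi_0\equiv0$, and then Corollary~\ref{cr3.1} to close. However, you misidentify where the work is, and as a result insert a barrier/comparison argument on the annulus $R\le|z-\bar Q_\varepsilon|\le\rho/\varepsilon$ that is not needed. You claim the direct estimate $\|W^2\phi_\varepsilon\|_{**}\le C\|\phi_\varepsilon\|_*$ ``does not close,'' but you miss that the quantity $\langle z-\bar Q\rangle^{3}W^2|\phi_\varepsilon|$ is already controlled by $\varepsilon^4(-\ln\varepsilon)\langle z-\bar Q\rangle^2\|\phi_\varepsilon\|_* + \langle z-\bar Q\rangle^{-2}\|\phi_\varepsilon\|_*$: the first piece is $O(\varepsilon^2(-\ln\varepsilon))$ using $\langle z-\bar Q\rangle\lesssim\varepsilon^{-1}$, and the second piece is $\le R^{-2}$ as soon as $|z-\bar Q|>R$ for any fixed $R$, with no need for a supersolution on an intermediate annulus. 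Combined with the local uniform convergence $\phi_\varepsilon\to 0$ on $B_R(\bar Q)$, this already yields $\|W^2\phi_\varepsilon\|_{**}=o(1)$ after choosing $R$ large and then $\varepsilon$ small (and a similar easy estimate handles the average part of the $\|\cdot\|_{**}$ norm). So the only genuine inputs are local elliptic convergence plus the nondegeneracy of the bubble; the annulus does not require separate treatment. Your proposal would still give a correct proof, but it is longer than necessary, and your stated ``main obstacle'' is not actually an obstacle.

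A second, smaller deviation is your treatment of the constant mode for $n=6$. You propose to prove $\bar\phi_\varepsilon\to 0$ by integrating the equation and using the smallness of $h_\varepsilon$ and the $d_i$; the paper instead uses that $Z_7$ is exactly a nonzero constant ($\partial_\eta W=\mu^{-1}\varepsilon^4$ is constant, so $Z_7=\varepsilon^6$), hence the orthogonality condition $\langle Z_7,\phi_\varepsilon\rangle=0$ gives $\bar\phi_\varepsilon=0$ identically. This exact cancellation is what lets one rewrite (\ref{3.9}) as $-\Delta\phi_\varepsilon+\mu\varepsilon^2(1-48\eta)\phi_\varepsilon=\cdots$ with both sides of mean zero and then invoke Corollary~\ref{cr3.1} directly. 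Your integration route is workable but clumsier and risks circularity in the estimates, since $\bar\phi_\varepsilon$ would then feed back into the $d_i$ bounds. In both places your plan is salvageable, but you should recognize that the paper closes the argument with the crude weighted decay alone, without barriers, and that the $\eta$-orthogonality forces $\bar\phi_\varepsilon=0$ exactly.
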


\begin{proof}
We prove this lemma by contradiction. First we consider for the case $n=4$. Assume that $\|\phi_{\e}\|_{*}=1$. Multiplying the first equation in (\ref{3.5}) by $Y_j$ and integrating in $\O$ we find
\begin{equation*}
\sum_{i}c_i\l Z_i,Y_j\r=\langle -\Delta Y_j+\mu\e^2Y_j-24W^2Y_j,\phi_{\varepsilon}\rangle-
\langle h_{\varepsilon},Y_j\rangle.
\end{equation*}
We can easily get the following equalities from the definition of $Z_i,Y_j$
\begin{align}
\label{3.16}
&\langle Z_0,Y_0\rangle=\|Y_0\|_{\varepsilon}^2=\gamma_0+o(1),\nonumber\\
&\langle Z_i,Y_i\rangle=\|Y_i\|_{\varepsilon}^2=\gamma_1+o(1),~1\leq i\leq 4,
\end{align}
where $\gamma_0,\gamma_1$ are strictly positive constants, and
\begin{equation}
\label{3.17}
\langle Z_i,Y_j\rangle=o(1),~i\neq j.
\end{equation}
On the other hand, in view of the definition of $Y_j$ and $W$, straightforward computations yield
$$\langle-\Delta Y_j+\mu\e^2Y_j-24W^{2}Y_j,\phi_{\varepsilon}\rangle=o(\|\phi_{\varepsilon}\|_*)$$
and
$$\langle h_{\varepsilon},Y_j\rangle=O(\|h_{\varepsilon}\|_{**}).$$
Consequently, inverting the quasi diagonal linear system solved by the $c_i$'s we find
\begin{equation}
\label{3.18}
c_i=O(\|h_{\varepsilon}\|_{**})+o(\|\phi_{\varepsilon}\|_*).
\end{equation}
In particular, $c_i=o(1)$ as $\varepsilon$ goes to zero.

Since $\|\phi_{\varepsilon}\|_*=1$, elliptic theory shows that along some subsequence, the functions $\phi_{\e,0}=\phi_{\varepsilon}(y-\bar{Q})$ converge uniformly in any compact subset of $\mathbb{R}^4$ to a nontrivial solution of
$$-\Delta\phi_0=24U_{\Lambda,0}^{2}\phi_0.$$
A bootstrap argument (see e.g. Proposition 2.2 of \cite{ww}) implies $|\phi_0(y)|\leq C(1+|y|)^{-2}.$ As consequence, $\phi_0$ can be written as
$$\phi_0=\alpha_0\frac{\partial U_{\Lambda,0}}{\partial \Lambda}+\sum_{i}\alpha_i\frac{\partial U_{\Lambda,0}}{\partial y_i}$$
(see \cite{r1}). On the other hand, equalities $\langle Z_i,\phi_{\varepsilon}\rangle=0$ yield
\begin{align*}
&\int_{\mathbb{R}^4}-\Delta\frac{\partial U_{\Lambda,0}}{\partial \Lambda}\phi_0=\int_{\mathbb{R}^4}U_{\Lambda,0}^{2}\frac{\partial U_{\Lambda,0}}{\partial \Lambda}\phi_0=0,\\
&\int_{\mathbb{R}^4}-\Delta\frac{\partial U_{\Lambda,0}}{\partial y_i}\phi_0=\int_{\mathbb{R}^4}U_{\Lambda,0}^{2}\frac{\partial U_{\Lambda,0}}{\partial y_i}\phi_0=0,~1\leq i\leq 4.
\end{align*}
As we also have
$$\int_{\mathbb{R}^4}\Big|\nabla\frac{\partial U_{\Lambda,0}}{\partial \Lambda}\Big|^2=\gamma_0>0,~
\int_{\mathbb{R}^4}\Big|\nabla\frac{\partial U_{\Lambda,0}}{\partial y_i}\Big|^2=\gamma_1>0,~1\leq i\leq 4,$$
and
$$\int_{\mathbb{R}^4}\nabla\frac{\partial U_{\Lambda,0}}{\partial \Lambda}\nabla\frac{\partial U_{\Lambda,0}}{\partial y_i}=\int_{\mathbb{R}^4}\nabla\frac{\partial U_{\Lambda,0}}{\partial y_i}\nabla\frac{\partial U_{\Lambda,0}}{\partial y_j}=0,~i\neq j,$$
the $\alpha_i's$ solve a homogeneous quasi diagonal linear system, yielding $\alpha_i=0,0\leq i\leq 4,$ and $\phi_0=0.$ So $\phi_{\varepsilon}(z-\bar{Q})\rightarrow0$ in $C^1_{loc}(\Omega_{\varepsilon}).$

Next, we will show $\|\phi_{\e}\|_*=o(1)$ by using the equation (\ref{3.5}). Using (\ref{3.5}) and Corollary \ref{cr3.1}, we have
\begin{equation}
\label{3.19}
\|\phi_{\varepsilon}\|_*\leq C(\|W^{2}\phi_{\varepsilon}\|_{**}+\|h\|_{**}+\sum_i|c_i|\|Z_i\|_{**}).
\end{equation}
Then we estimate the right hand side of (\ref{3.19}) term by term. By the help of (\ref{2.16}), we deduce that
\begin{equation}
\label{3.20}
|\l z-\bar{Q}\r^{3}W^2\phi_{\e}|\leq C\e^4(-\ln \e)\l z-\bar{Q}\r^2\|\phi_{\e}\|_{*}+\l z-\bar{Q}\r^{-1}|\phi_{\e}|.
\end{equation}
Since $\|\phi_{\e}\|_{*}=1,$ the first term on the right hand side of (\ref{3.20}) is dominated by $\e^2(-\ln \e).$ The last term goes uniformly to zero in any ball $B_{R}(\bar {Q}),$ and is dominated by $\l z-\bar{Q}\r^{-2}\|\phi_{\e}\|_{*}=\l z-\bar{Q}\r^{-2},$ which, through the choice of $R,$ can be made as small as possible in $\Omega_{\e}\backslash B_{R}(\bar {Q}).$ Consequently,
\begin{equation}
\label{3.21}
|\l z-\bar{Q}\r^{3}W^2\phi_{\e}|=o(1)
\end{equation}
as $\e$ goes to zero, uniformly in $\Omega_{\e}.$ On the other hand, we can also get
\begin{align*}
\e^{-3}(-\ln \e)^{\frac 12}{\overline {W^2\phi_{\e}}} & \leq C\e (-\ln \e)^{\frac 12}\int_{\Omega_{\e}}(\l z-\bar{Q}\r^{-4}+\e^4(-\ln \e))|\phi_{\e}|\\
& \leq C\e(-\ln \e)^{\frac 12}\int_{\Omega_{\e}}(\l z-\bar{Q}\r^{-5}+\e^4(-\ln \e)\l z-\bar{Q}\r^{-1})\|\phi_{\e}\|_{*}\\&= o(1).
\end{align*}
Finally, we obtain
$$\|W^2\phi_{\e}\|_{**}=o(1).$$
In view of the formula (\ref{3.4}), we have
$$\l z-\bar{Q}\r^3|Z_i| \leq C(\l z-\bar{Q}\r^{3}\e^4(\frac{1}{-\ln\e})+\l z-\bar{Q}\r^{-3})= O(1).$$
and
$$\e^{-3}(-\ln \e)^{\frac {1}{2}}\overline {Z_i}\leq C \e(-\ln \e)^{\frac {1}{2}}\int_{\O}|\l z-\bar{Q}\r^{-6}+\e^4|{\rm d}x=o(1).$$
Hence, $\|Z_i\|_{**}=O(1).$ Therefore, we have
\begin{equation}
\label{3.22}
\|\phi_{\e}\|_*\leq C(\|W^2\phi_{\e}\|_{**}+\|h\|_{**}+\sum_i|c_i|\|Z_i\|_{**})=o(1),
\end{equation}
which contradicts our assumption that $\|\phi_{\e}\|_*=1$.
\medskip

For $n=6.$ We still assume that $\|\phi_{\e}\|_{***}=1.$ Using the similar arguments in previous case, we obtain the following
\begin{equation}
\label{3.23}
d_i=O(\|h\|_{****})+o(\|\phi\|_{***})~\mathrm{for}~0\leq i\leq6,~d_7=O(\e^{-2}\|h\|_{****})+ O(\e^{-1}\|\phi\|_{***}).
\end{equation}
and $\phi_{\e}(z-\bar{Q})\rightarrow0$ in $C^1_{loc}({\Omega_{\e}}).$ Next, we will show $\|\phi_{\e}\|_{***}=o(1)$ by using the equation (\ref{3.9}). At first, we write the equation (\ref{3.9}) into the following
\begin{equation}
\label{3.24}
-\Delta\phi_{\e}+\mu\e^2(1-48\eta)\phi_{\e}=h+\sum_id_iZ_i+48U\phi_{\e}+48\e^3\hat{U}\phi_{\e}.
\end{equation}
Since $\int_{\O}\phi=0,$ as a result, we can find the integral for both sides of (\ref{3.24}) in $\O$ are $0.$ Using Corollary \ref{cr3.1} again, we have
\begin{equation}
\label{3.25}
\|\phi_{\e}\|_{***}\leq C(\|(U+\e^3\hat{U})\phi_{\e}\|_{****}+\|h\|_{****}+\sum_{i}|d_i|\|Z_i\|_{****}).
\end{equation}
From the formula of $U$ and $\hat{U},$ it is not difficult to show
$$U+\e^3\hat{U}\leq C\l z-\bar{Q}\r^{-4}.$$
Similar to the case $n=4,$ we could show $\|\l z-\bar{Q}\r^{-4}\phi_{\e}\|_{****}=o(1)$, $\|Z_i\|_{****}=O(1)$ for $0\leq i\leq6$ and $\|Z_7\|_{****}=O(\e^2).$ Therefore, by the above facts and (\ref{3.23}), we conclude
$$\|\phi_{\e}\|_{***}\leq o(1)+C\|h\|_{****}+o(1)\|\phi_{\e}\|_{***}=o(1)$$
which contradicts the previous assumption that $\|\phi_{\e}\|_{***}=1.$ Hence, we finish the proof.
\end{proof}

\noindent {\it Proof of Proposition \ref{pr3.1}.} Since the proof of the case $n=4$ and $n=6$ are almost the same, we only give the proof for the former one. We set
$$H=\{\phi\in H^1(\Omega_{\varepsilon})\mid\langle Z_i,\phi\rangle=0,~0\leq i\leq 4\},$$
equipped with the scalar product $(\cdot,\cdot)_{\varepsilon}.$ Problem (\ref{3.5}) is equivalent to find $\phi\in H$ such that
$$(\phi,\theta)_{\varepsilon}=\langle 24W^{2}\phi+h,\theta\rangle,\quad\forall \theta\in H,$$
that is
\begin{align}
\label{3.26}
\phi=T_{\varepsilon}(\phi)+\tilde{h},
\end{align}
where $\tilde{h}$ depends on $h$ linearly, and $T_{\varepsilon}$ is a compact operator in $H.$ Fredholm's alternative
ensures the existence of a unique solution, provided that the kernel of $Id-T_{\varepsilon}$ is reduced to $0$. We notice that any $\phi_{\varepsilon}\in \mathrm{Ker}(Id-T_{\varepsilon})$ solves (\ref{3.5}) with $h=0.$ Thus, we deduce from Lemma \ref{le3.2} that $\|\phi_{\varepsilon}\|_*=o(1)$ as $\varepsilon$ goes to zero. As $\mathrm{Ker}(Id-T_{\varepsilon})$ is a vector space and is $\{0\}.$ The inequalities (\ref{3.14}) follow from Lemma \ref{le3.2} and (\ref{3.18}). This completes the proof of the first part of Proposition \ref{pr3.1}.

The smoothness of $L_{\varepsilon}$ with respect to $\Lambda$ and $\bar{Q}$ is a consequence of the smoothness of $T_{\varepsilon}$ and $\tilde{h}$, which occur in the implicit definition (\ref{3.26}) of $\phi\equiv L_{\varepsilon}(h)$, with respect to these variables. Inequality (\ref{3.15}) is obtained by differentiating (\ref{3.5}), writing the derivatives of $\phi$ with respect $\Lambda$ and $\bar{Q}$ as linear combinations of the $Z_i$'s and an orthogonal part, and estimating each term by using the first part of the proposition, one can see \cite{dfm},\cite{kr} for detailed computations. \hfill $\Box$
\vspace{1cm}

\section{Finite-dimensional reduction:a nonlinear problem}
\setcounter{equation}{0}
In this section, we turn our attention to the nonlinear problem, which we solve in the finite-dimensional subspace orthogonal to the $Z_i.$~Let $S_{\e}[u]$ be as defined at $(\ref{1.12}).$ Then (\ref{1.11}) is equivalent to

\begin{equation}
\label{4.1}
S_{\varepsilon}[u]=0~\mathrm{in}~\Omega_{\varepsilon},\quad u_+\neq0,\quad\frac{\partial u}{\partial\nu}=0~\mathrm{on}~\partial\Omega_{\varepsilon}.
\end{equation}
Indeed, if $u$ satisfies (\ref{4.1}), the Maximal Principle ensures that $u>0$ in $\Omega_{\varepsilon}$ and (\ref{1.12}) is satisfied. Observing that
$$S_{\e}[W+\phi]=-\Delta (W+\phi)+\mu\e^2(W+\phi)-n(n-2)(W+\phi)^{\frac{n+2}{n-2}}$$
may be written as
\begin{equation}
\label{4.2}
S_{\e}[W+\phi]=-\Delta \phi+\mu\e^2\phi-n(n+2)W^{\frac {4}{n-2}}\phi+R^{\e}-n(n-2)N_{\e}(\phi)
\end{equation}
with
\begin{equation}
\label{4.3}
N_{\e}(\phi)=(W+\phi)^{\frac{n+2}{n-2}}-W^{\frac{n+2}{n-2}}-\frac{n+2}{n-2}W^{\frac{4}{n-2}}\phi
\end{equation}
and
\begin{equation}
\label{4.4}
R^{\e}=S_{\e}[W]=-\Delta W+\mu\e^2W-n(n-2)W^{\frac{n+2}{n-2}}.
\end{equation}\par

From Lemma \ref{le2.1} we get
\begin{equation}
\label{4.5}
\left\{\begin{array}{ll}
\|R^{\e}\|_{**}\leq C\e\Lambda+\e^2(-\ln\e)^{\frac12},~\|D_{{(}\Lambda,\bar{Q}{)}}R^{\e}\|_{**}\leq C\e,~&n=4,\\
\|R^{\e}\|_{****}\leq C\e^{2\frac{2}{3}},~\|D_{{(}\Lambda,\bar{Q},\eta{)}}R^{\e}\|_{****}\leq C\e^{2},~&n=6.
\end{array}\right.
\end{equation}

We now consider the following nonlinear problem: finding $\phi$~such that, for some numbers $c_i$,
\begin{equation}
\label{4.6}
\begin{cases}
-\Delta (W+\phi)+\mu\e^2(W+\phi)-8(W+\phi)^{3}=\sum_i c_iZ_i~~&{\rm in }~\Omega_{\e},\\
\frac {\pt \phi}{\pt \nu}=0~~&{\rm on}~\partial \Omega_{\e},\\
\l Z_i,\phi\r=0,~~~ &0 \leq i\leq4\end{cases}
\end{equation}
for $n=4,$ and finding $\phi$~such that, for some numbers $d_i$,
\begin{equation}
\label{4.7}
\begin{cases}
-\Delta (W+\phi)+\mu\e^2(W+\phi)-24(W+\phi)^{2}=\sum_i d_iZ_i~~&{\rm in }~\Omega_{\e},\\
\frac {\pt \phi}{\pt \nu}=0~~&{\rm on}~\partial \Omega_{\e},\\
\l Z_i,\phi\r=0,~~~ &0 \leq i\leq7\end{cases}
\end{equation}
for $n=6.$ The first equation in (\ref{4.6}) and (\ref{4.7}) can be also written as
\begin{align}
\label{4.8}
-\Delta \phi+\mu\e^2\phi-24W^2\phi=8N_{\e}(\phi)-R^{\e}+\sum_i{c_i}Z_i,\nonumber\\
-\Delta \phi+\mu\e^2\phi-48W\phi=24N_{\e}(\phi)-R^{\e}+\sum_i{d_i}Z_i.
\end{align}
In order to employ the contraction mapping theorem to prove that (\ref{4.6}) and (\ref{4.7}) are uniquely solvable in the set where $\|\phi\|_{*}$ and $\|\phi\|_{***}$ are small respectively, we need to estimate $N_{\e}$ in the following lemma.

\begin{lemma}
\label{le4.1}
There exists $\varepsilon_1>0,$ independent of $\Lambda,\bar{Q},\eta$ and $C$ independent of $\varepsilon,\Lambda,\bar{Q},\eta$ such that for $\varepsilon\leq\varepsilon_1$ and
\begin{align*}
\|\phi\|_*\leq C\e\Lambda~\mathrm{for}~n=4,\quad\|\phi\|_{***}\leq C\e^{2\frac23}~\mathrm{for}~n=6.
\end{align*}
Then,
\begin{equation}
\label{4.9}
\|N_{\varepsilon}(\phi)\|_{**}\leq C\e\Lambda\|\phi\|_*~\mathrm{for}~n=4,\quad\|N_{\varepsilon}(\phi)\|_{****}\leq C\e\|\phi\|_{***}~\mathrm{for}~n=6.
\end{equation}
%\begin{equation}
%\label{4.8}
%\|N_{\varepsilon}(\phi)\|_{**}\leq C
%\left\{\begin{array}{ll}
%\e^{\frac{1}{6}}\|\phi\|_*,~& n=4,\\
%\e\|\phi\|_*,~& n=6.
%\end{array}\right.
%\end{equation}
For
\begin{align*}
\|\phi_i\|_{*}\leq C\e\Lambda~\mathrm{for}~n=4,\quad\|\phi_i\|_{***}\leq C\e^{2\frac23}~\mathrm{for}~n=6,~~\quad i=1,2.
\end{align*}
Then,
\begin{align}
\label{4.10}
&\|N_{\varepsilon}(\phi_1)-N_{\varepsilon}(\phi_2)\|_{**}\leq C\e\Lambda\|\phi_1-\phi_2\|_*~\mathrm{for}~n=4,\nonumber\\
&\|N_{\varepsilon}(\phi_1)-N_{\varepsilon}(\phi_2)\|_{****}\leq C\e\|\phi_1-\phi_2\|_{***}~\mathrm{for}~n=6.
\end{align}
%\begin{equation}
%\label{4.9}
%\|N_{\varepsilon}(\phi_1)-N_{\varepsilon}(\phi_2)\|_{**}\leq C
%\left\{\begin{array}{ll}
%\e^{\frac{1}{6}}\|\phi_1-\phi_2\|_*,~& n=4,\\
%\e\|\phi_1-\phi_2\|_*,~&n=6.
%\end{array}\right.
%\end{equation}
\end{lemma}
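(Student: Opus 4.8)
The plan is to exploit the fact that in dimensions $n=4,6$ the exponent $\frac{n+2}{n-2}$ is an integer, so the remainder $(\ref{4.3})$ is a polynomial: $N_\e(\phi)=3W\phi^2+\phi^3$ when $n=4$, and $N_\e(\phi)=\phi^2$ when $n=6$. Hence none of the fractional-power (H\"older) estimates that a general exponent would force are needed, and I would estimate each monomial separately, using the pointwise bounds $(\ref{2.16})$ on $W$, the defining inequalities $|\phi(z)|\le\|\phi\|_*\l z-\bar{Q}\r^{-1}$ (for $n=4$) and $|\phi(z)|\le\|\phi\|_{***}\l z-\bar{Q}\r^{-2}$ (for $n=6$), and the elementary bound $\l z-\bar{Q}\r\le C\e^{-1}$ on $\Omega_\e$.

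The case $n=6$ is immediate from the definition of the norms in $(\ref{3.10})$: since $|\phi^2(z)|\le\|\phi\|_{***}^2\l z-\bar{Q}\r^{-4}$ one has $\|N_\e(\phi)\|_{****}\le\|\phi\|_{***}^2$, and the hypothesis $\|\phi\|_{***}\le C\e^{2\frac{2}{3}}$ gives $\|N_\e(\phi)\|_{****}\le C\e^{2\frac{2}{3}}\|\phi\|_{***}\le C\e\|\phi\|_{***}$, which is $(\ref{4.9})$; $(\ref{4.10})$ follows by factoring $\phi_1^2-\phi_2^2=(\phi_1+\phi_2)(\phi_1-\phi_2)$ and repeating the same bound.

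For $n=4$ one must control both pieces of $\|\cdot\|_{**}$. For the weighted supremum, inserting $(\ref{2.16})$ and $|\phi|\le\|\phi\|_*\l z-\bar{Q}\r^{-1}$ into $\l z-\bar{Q}\r^3|W\phi^2|$ and $\l z-\bar{Q}\r^3|\phi^3|$, and using $\l z-\bar{Q}\r\le C\e^{-1}$ against the slowly decaying $\e^2(-\ln\e)^{1/2}$-part of $W$, yields $\|\l z-\bar{Q}\r^3 N_\e(\phi)\|_\infty\le C\|\phi\|_*^2$. For the averaged part I would use $\int_{\Omega_\e}\l z-\bar{Q}\r^{-2}\,{\rm d}z\le C\e^{-2}$, $\int_{\Omega_\e}\l z-\bar{Q}\r^{-3}\,{\rm d}z\le C\e^{-1}$, $\int_{\Omega_\e}\l z-\bar{Q}\r^{-4}\,{\rm d}z\le C(-\ln\e)$ and $|\Omega_\e|\sim\e^{-4}$ to get $|\overline{W\phi^2}|\le C\e^4(-\ln\e)\|\phi\|_*^2$ and $|\overline{\phi^3}|\le C\e^3\|\phi\|_*^3$, hence $\e^{-3}(-\ln\e)^{1/2}|\overline{N_\e(\phi)}|\le C\big(\e(-\ln\e)^{3/2}\|\phi\|_*^2+(-\ln\e)^{1/2}\|\phi\|_*^3\big)$. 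Combining the two pieces and using the smallness of $\e$ (and of $\e\Lambda$, since $\|\phi\|_*\le C\e\Lambda$) gives $\|N_\e(\phi)\|_{**}\le C\|\phi\|_*^2\le C\e\Lambda\|\phi\|_*$, which is $(\ref{4.9})$. Then $(\ref{4.10})$ follows verbatim from the identity $N_\e(\phi_1)-N_\e(\phi_2)=3W(\phi_1+\phi_2)(\phi_1-\phi_2)+(\phi_1^2+\phi_1\phi_2+\phi_2^2)(\phi_1-\phi_2)$ on pulling $|\phi_1-\phi_2|$ out of each step.

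I expect no conceptual obstacle here. The one point deserving attention is the bookkeeping of logarithmic weights in the $n=4$ averaged norm --- one must check that the powers of $(-\ln\e)$ produced by the factor $\e^{-3}(-\ln\e)^{1/2}$ in $\|\cdot\|_{**}$ and by $\int_{\Omega_\e}\l z-\bar{Q}\r^{-4}\,{\rm d}z$ are all absorbed by the smallness of $\e$ and of $\e\Lambda$, so that the essential gain of an extra factor $\e\Lambda$ (which is what later makes the map $\phi\mapsto L_\e(8N_\e(\phi)-R^\e)$ a contraction) survives.
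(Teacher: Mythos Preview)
Your proposal is correct and follows essentially the same approach as the paper: pointwise control of the polynomial remainder $N_\e(\phi)$ via the bound (\ref{2.16}) on $W$ and the definition of the weighted norms, handling the two pieces of $\|\cdot\|_{**}$ separately in $n=4$. The only cosmetic differences are that the paper writes the general inequality $|N_\e(\phi)|\le C(W\phi^2+|\phi|^3)$ rather than the exact polynomial, and for (\ref{4.10}) uses the mean value theorem $N_\e(\phi_1)-N_\e(\phi_2)=\partial_\vartheta N_\e(\vartheta)(\phi_1-\phi_2)$ with $\partial_\vartheta N_\e(\vartheta)\le C(|W||\vartheta|+\vartheta^2)$ instead of your explicit algebraic factorizations --- but these are equivalent here.
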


\begin{proof}
Since the proof of these two cases are similar, we only consider $n=4$ here.
From (\ref{4.3}), we see that
\begin{equation}
\label{4.11}
|N_{\e}(\phi)|\leq C(W\phi^2+|\phi|^3).
\end{equation}
Using (\ref{2.15}), we infer that
\begin{align*}
\e^{-3}(-\ln \e)^{\frac 12}\overline{W\phi^2+|\phi|^3}=
\e(-\ln \e)^{\frac 12}\int_{\Omega_{\e}}(W\phi^2+|\phi|^3),
\end{align*}
where the integration term on the right hand side of the above equality can be estimated as
\begin{align*}
\big|W\phi^2+|\phi|^3\big|\leq ~&C\left(\big(\l z-\bar{Q}\r^{-2}+\e^2(-\ln\e)^{\frac{1}{2}}\big)|\phi|^2+|\phi|^3\right)\\
\leq ~&C\Big(\big(\l z-\bar{Q}\r^{-4}+\e^2(-\ln\e)^{\frac{1}{2}}\l z-\bar{Q}\r^{-2}\big)\|\phi\|_*^2+\l z-\bar{Q}\r^{-3}\|\phi\|_*^3\Big)\\
\leq ~&C\Big(\big(\e\l z-\bar{Q}\r^{-4}+\e^3(-\ln\e)^{\frac{1}{2}}\l z-\bar{Q}\r^{-2}\big)\Lambda\Big)\|\phi\|_*.
\end{align*}
As a consequence,
\begin{align*}
\e^{-3}(-\ln \e)^{\frac 12}\overline{W\phi^2+|\phi|^3}\leq C\e^2(-\ln\e)^{\frac{3}{2}}
\Lambda\|\phi\|_*\leq C\e\Lambda\|\phi\|_*.
\end{align*}
On the other hand,
$$\|\l z-\bar{Q}\r^{3}(W\phi^2+|\phi|^3)\|_{\infty}\leq C\e\Lambda\|\phi\|_{*}.$$
Thus, $(\ref{4.9})$ follows. Concerning (\ref{4.10}), we write
$$N_{\e}(\phi_1)-N_{\e}(\phi_2)=\pt_{\vartheta}N_{\e}(\vartheta)(\phi_1-\phi_2)$$
for some
$\vartheta=x\phi_1+(1-x)\phi_2,~x\in[0,1].$ From
$$\pt_{\vartheta}N_{\e}(\vartheta)=3[(W+\vartheta)^2-W^2],$$
we deduce that
\begin{equation}
\label{4.12}
\pt_{\vartheta}N_{\e}(\vartheta)\leq C(|W||\vartheta|+\vartheta^2)
\end{equation}
and the proof of (\ref{4.10}) is similar to the previous one.
\end{proof}

\begin{proposition}
\label{pr4.1}
For the case $n=4,$ there exists $C,~$independent of $\e$ and $\Lambda,~Q$ satisfying (\ref{2.11}), such that  for small $\e$ problem (\ref{4.6}) has a unique solution  $\phi=\phi(\Lambda,\bar{Q},\e)$ with
\begin{align}
\label{4.13}
\|\phi\|_{*}\leq  C\e\Lambda.
\end{align}
Moreover, $(\Lambda,\bar{Q})\rightarrow \phi(\Lambda,\bar{Q},\e)$ is $C^1$ with respect to the $*$-norm, and
\begin{align}
\label{4.14}
\|D_{(\Lambda,\bar{Q})}\phi\|_{*}\leq  C\e.
\end{align}
For the case $n=6,$ there exists $C,~$independent of $\e$ and $\Lambda,~\eta,~Q$ satisfying (\ref{2.13}), such that  for small $\e$ problem (\ref{4.7}) has a unique solution  $\phi=\phi(\Lambda,\eta,\bar{Q},\e)$ with
\begin{align}
\label{4.15}
\|\phi\|_{***}\leq  C\e^{\frac{8}{3}}.
\end{align}
Moreover, $(\Lambda,\eta,\bar{Q})\rightarrow \phi(\Lambda,\eta,\bar{Q},\e)$ is $C^1$ with respect to the $***$-norm, and
\begin{align}
\label{4.16}
\|D_{(\Lambda,\eta,\bar{Q})}\phi\|_{***}\leq  C\e^{\frac{5}{3}}.
\end{align}
\end{proposition}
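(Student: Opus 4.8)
The plan is to solve \eqref{4.6} and \eqref{4.7} by a standard contraction mapping argument in the ball of radius $C\e\Lambda$ (resp.\ $C\e^{8/3}$) of the weighted space, using the linear theory of Proposition~\ref{pr3.1} together with the nonlinear estimates of Lemma~\ref{le4.1} and the error estimate \eqref{4.5}. Concretely, for $n=4$ rewrite the first equation of \eqref{4.8} as a fixed point problem $\phi = \mathcal{A}_\e(\phi) := L_\e\big(8N_\e(\phi) - R^\e\big)$, where $L_\e$ is the bounded inverse produced by Proposition~\ref{pr3.1}; by construction any fixed point of $\mathcal{A}_\e$ solves \eqref{4.6} with the $c_i$ coming out of the definition of $L_\e$. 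One then checks that $\mathcal{A}_\e$ maps the ball $\mathcal{B} := \{\|\phi\|_* \le C\e\Lambda\}$ into itself: by \eqref{3.14}, $\|\mathcal{A}_\e(\phi)\|_* \le C\big(\|N_\e(\phi)\|_{**} + \|R^\e\|_{**}\big)$, and on $\mathcal{B}$ Lemma~\ref{le4.1} gives $\|N_\e(\phi)\|_{**} \le C\e\Lambda\|\phi\|_* \le C\e^2\Lambda^2 = o(\e\Lambda)$, while \eqref{4.5} gives $\|R^\e\|_{**} \le C(\e\Lambda + \e^2(-\ln\e)^{1/2})$, which is $\le C\e\Lambda$ after adjusting the constant (recall $\Lambda \ge \Lambda_{4,1} = e^{-1/2}\e^\beta$, so $\e^2(-\ln\e)^{1/2} \le C\e\Lambda$ for $\beta$ small enough and $\e$ small). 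The contraction property follows likewise from \eqref{4.10}: $\|\mathcal{A}_\e(\phi_1) - \mathcal{A}_\e(\phi_2)\|_* \le C\|N_\e(\phi_1) - N_\e(\phi_2)\|_{**} \le C\e\Lambda\|\phi_1 - \phi_2\|_* \le \tfrac12\|\phi_1 - \phi_2\|_*$ for $\e$ small. Banach's fixed point theorem then yields the unique $\phi = \phi(\Lambda,\bar Q,\e) \in \mathcal{B}$, proving \eqref{4.13}. The case $n=6$ is identical in structure, working with $L_\e$ on the orthogonal complement of $Z_0,\dots,Z_7$, the norm $\|\cdot\|_{***}$, and the ball of radius $C\e^{8/3}$; here $\|R^\e\|_{****} \le C\e^{8/3}$ by \eqref{4.5} and $\|N_\e(\phi)\|_{****} \le C\e\|\phi\|_{***} = o(\e^{8/3})$ on the ball, so $\mathcal{A}_\e$ is again a self-map and a contraction, giving \eqref{4.15}.

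For the $C^1$ dependence and the derivative bounds \eqref{4.14} and \eqref{4.16}, I would differentiate the fixed-point identity $\phi = L_\e(8N_\e(\phi) - R^\e)$ formally with respect to the parameters $p \in \{\Lambda, \bar Q\}$ (resp.\ $\{\Lambda, \eta, \bar Q\}$). Using that $L_\e$ is $C^1$ in the parameters with $\|D_p L_\e(h)\|_* \le C\|h\|_{**}$ (resp.\ $\|D_p L_\e(h)\|_{***} \le C\e^{-1}\|h\|_{****}$) by \eqref{3.15}, and that $N_\e$ and $R^\e$ are $C^1$ with the derivative estimates in Lemma~\ref{le4.1} and \eqref{4.5}, one gets a linear equation for $D_p\phi$ of the schematic form
\begin{equation*}
D_p\phi = L_\e\big(8\, \pt_\phi N_\e(\phi)[D_p\phi]\big) + (\text{terms of size } O(\e)),
\end{equation*}
whose operator norm of the linear part is $o(1)$ by the same Lipschitz estimate \eqref{4.12} used in Lemma~\ref{le4.1}; hence $D_p\phi$ exists, is continuous, and is bounded by the inhomogeneous terms, i.e.\ $\|D_{(\Lambda,\bar Q)}\phi\|_* \le C\e$ for $n=4$. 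For $n=6$ the factor $\e^{-1}$ in \eqref{3.15} together with $\|R^\e\|_{****}, \|D_p R^\e\|_{****} = O(\e^{2})$ and $\|D_p N_\e(\phi)\|_{****} = O(\e \cdot \e^{8/3})$ yields $\|D_{(\Lambda,\eta,\bar Q)}\phi\|_{***} \le C\e^{-1}\cdot\e^{8/3} = C\e^{5/3}$, which is \eqref{4.16}. Rigorously, rather than differentiating formally one applies the implicit function theorem to the map $(p,\phi) \mapsto \phi - L_\e(8N_\e(\phi) - R^\e)$ on the weighted space, whose partial derivative in $\phi$ is $\mathrm{Id} - L_\e \circ (8\,\pt_\phi N_\e(\phi))$ and is invertible since the second summand has small norm.

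I expect the main obstacle to be purely bookkeeping rather than conceptual: one has to keep careful track of the parameter $\Lambda$ in all the estimates for $n=4$, since $\Lambda$ ranges over an interval depending on $\e$ (the footnote after \eqref{2.9} and the appearance of $\Lambda$ in \eqref{4.5} and Lemma~\ref{le4.1} make this explicit), so the self-map and contraction radii must be taken proportional to $\e\Lambda$ rather than a fixed power of $\e$, and one must verify that the "lower order'' pieces like $\e^2(-\ln\e)^{1/2}$ are genuinely dominated by $\e\Lambda$ on the whole admissible range $[\Lambda_{4,1},\Lambda_{4,2}]$. For $n=6$ the delicate point is instead the loss of one power of $\e$ in the parameter-derivative estimate \eqref{3.15} coming from the resonance of the constant mode; this is exactly why the solution is only $\e^{8/3}$-small and its derivative only $\e^{5/3}$-small, and one must check these exponents are still good enough for the reduced-energy analysis in the next sections. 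Neither difficulty requires new ideas beyond combining Proposition~\ref{pr3.1}, Lemma~\ref{le4.1}, and \eqref{4.5}, so the proof is a routine, if careful, application of the contraction mapping principle and the implicit function theorem.
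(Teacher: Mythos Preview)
Your proposal is correct and follows essentially the same route as the paper: a contraction mapping argument on a ball of radius $C\e\Lambda$ (resp.\ $C\e^{8/3}$) using $L_\e$ from Proposition~\ref{pr3.1}, the nonlinear bounds of Lemma~\ref{le4.1}, and the error estimate \eqref{4.5}, followed by the implicit function theorem applied to $B(\Lambda,\bar Q,\psi)=\psi-L_\e(8N_\e(\psi)-R^\e)$ for the $C^1$ dependence and derivative bounds. Your remark that $\e^2(-\ln\e)^{1/2}\le C\e\Lambda$ on the admissible range $\Lambda\ge e^{-1/2}\e^\beta$ is exactly the check the paper leaves implicit when it writes $\|R^\e\|_{**}\le C\e\Lambda$ in the self-map estimate.
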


\begin{proof}
We only give the proof of $n=4$, the other case can be argued similarly. In the same spirit of  \cite{dfm}, we consider the map $A_\e$ from $\cal{F}$=$\{\phi\in H^1(\Omega_{\varepsilon})|\|\phi\|_{*}\leq C^{'}\e\Lambda\}$ to $H^1(\Omega_{\varepsilon})$ defined as
$$A_{\e}(\phi)=L_{\e}(8N_{\e}(\phi)+R^{\e}).$$
Here $C^{'}$~is a large number, to be determined later, and $L_{\e}$~is given by Proposition \ref{pr3.1}. We note that finding a solution $\phi$ to problem (\ref{4.6}) is equivalent to finding a fixed point of $A_{\e}.$ On the one hand, we have for $\phi\in \cal{F}$, using (\ref{4.5}), Proposition \ref{pr3.1} and Lemma \ref{le4.1},
\begin{align*}
\|A_{\e}(\phi)\|_{*}&\leq 8\|L_{\e}(N_{\e}(\phi))||_{*}+\|L_{\e}(R^{\e})\|_{*}\leq C_1(\|N_{\e}(\phi)\|_{**}+\e\Lambda)\\&\leq C_2C^{'}\e^{2}\Lambda+C_1\e\Lambda\leq C^{'}\e\Lambda
\end {align*}
for $C^{'}=2C_1$ and $\e$ small enough, implying that $A_{\e}$ sends $\cal{F}$ into itself. On the other hand, $A_{\e}$ is a contraction. Indeed, for $\phi_1~{\rm and}~\phi_2~{\rm in }~\cal{F},$ we write
$$\|A_{\e}(\phi_1)-A_{\e}(\phi_2)\|_{*}\leq C\|N_{\e}(\phi_1)-N_{\e}(\phi_2)\|_{**}\leq C\e\Lambda\|\phi_{1}-\phi_2\|_{*}\leq \frac 12\|\phi_{1}-\phi_2\|_{*}$$
for $\e$ small enough. The contraction Mapping Theorem implies that $A_{\e}$ has a unique fixed point in $\cal{F},$ that is, problem (\ref{4.6}) has a unique solution $\phi$ such that $\|\phi\|_{*}\leq C^{'}\e\Lambda.$

In order to prove that $(\Lambda,\bar{Q})\rightarrow \phi(\Lambda,\bar{Q})$ is $C^1,$ we remark that if we set for $\psi\in F,$
$$B(\Lambda,\bar{Q},\psi)\equiv \psi-L_{\e}(8N_{\e}(\psi)+R^{\e}),$$
then $\phi$ is defined as
\begin{equation}
\label{4.17}
B(\Lambda,\bar{Q},\phi)=0.
\end{equation}
We have
$$\partial_{\psi}B(\Lambda,\bar{Q},\psi)[\theta]=\theta-8L_{\varepsilon}(\theta(\partial_{\psi}N_{\varepsilon})
(\psi)).$$
Using Proposition \ref{pr3.1} and (\ref{4.12}) we write
\begin{align*}
\|L_{\varepsilon}(\theta(\partial_{\psi}N_{\varepsilon})
(\psi))\|_*\leq&~C\|\theta(\partial_{\psi}N_{\varepsilon})
(\psi)\|_{**}\leq\|\langle z-\bar{Q}\rangle^{-1}(\partial_{\psi}N_{\varepsilon})
(\psi)\|_{**}\|\theta\|_*\\
\leq&~C\|\langle z-\bar{Q}\rangle^{-1}(W_+|\psi|+|\psi|^2)\|_{**}\|\theta\|_*.
\end{align*}
Using (\ref{2.16}), (\ref{3.10}) and $\psi\in\mathcal{F},$ we obtain
\begin{equation*}
\|L_{\varepsilon}(\theta(\partial_{\psi}N_{\varepsilon})
(\psi))\|_*\leq C\e\|\theta\|_*.
\end{equation*}
Consequently, $\partial_{\psi}B(\Lambda,\bar{Q},\phi)$ is invertible with uniformly bounded inverse. Then the fact that $(\Lambda,\bar{Q})\mapsto\phi(\Lambda,\bar{Q})$ is $C^1$ follows from the fact that $(\Lambda,\bar{Q},\psi)\mapsto L_{\e}(N_{\varepsilon}
(\psi))$ is $C^1$ and the implicit function theorem.

Finally, let us consider (\ref{4.14}). Differentiating (\ref{4.17}) with respect to $\Lambda,$ we find
$$\partial_{\Lambda}\phi=(\partial_{\psi}B(\Lambda,\xi,\phi))^{-1}\Big((\partial_{\Lambda}L_{\varepsilon})
(N_{\varepsilon}(\phi))+L_{\varepsilon}((\partial_{\Lambda}N_{\varepsilon})(\phi))+L_{\varepsilon}
(\partial_{\Lambda}R^{\varepsilon})\Big).$$
Then by Proposition \ref{pr3.1},
$$\|\partial_{\Lambda}\phi\|_*\leq C(\|N_{\varepsilon}(\phi)\|_{**}+\|(\partial_{\Lambda}N_{\varepsilon})(\phi)\|_{**}+\|\partial_{\Lambda}
R^{\varepsilon}\|_{**}).$$
From Lemma \ref{4.1} and (\ref{4.13}), we know that $\|N_{\varepsilon}(\phi)\|_{**}\leq C\varepsilon^2.$ Concerning the next term, we notice that according to the definition of $N_{\varepsilon},$
$$|\partial_{\Lambda}N_{\varepsilon}(\phi)|=3\phi^2|\pt_{\Lambda} W|.$$
Note that
$$|\partial_{\Lambda}W(z)|\leq C(\l z-\bar{Q}\r^{-2}+\e^2(-\ln \e)^{\frac 12}),$$
we have
$$\|\partial_{\Lambda}N_{\varepsilon}(\phi)\|_{**}\leq C\varepsilon.$$
Finally, using (\ref{4.5}), we obtain
$$\|\partial_{\Lambda}\phi\|_*\leq C\e.$$
The derivative of $\phi$ with respect to $\bar{Q}$ may be estimated in the same way.  This concludes the proof.
\end{proof}
\vspace{1cm}

\section{Finite-dimensional reduction: reduced energy}
Let us define a reduced energy functional as
\begin{align}
\label{5.1}
I_{\varepsilon}(\Lambda,Q)\equiv J_{\varepsilon}[W_{\Lambda,\bar{Q}}+\phi_{\varepsilon,\Lambda,\bar{Q}}]
\end{align}
for $n=4$ and
\begin{align}
\label{5.2}
I_{\varepsilon}(\Lambda,\eta,Q)\equiv J_{\varepsilon}[W_{\Lambda,\eta,\bar{Q}}+\phi_{\varepsilon,\Lambda,\eta,\bar{Q}}]
\end{align}
for $n=6.$ We have
\begin{proposition}
\label{pr5.1}
The function $u=W_{\Lambda,\bar{Q}}+\phi_{\e,\Lambda,\bar{Q}}$ is a solution to problem (\ref{1.11}) for $n=4$ if and only if $(\Lambda,\bar{Q})$ is a critical point of $I_{\varepsilon}.$ The function $u=W_{\Lambda,\eta,\bar{Q}}+\phi_{\e,\Lambda,\eta,\bar{Q}}$ is a solution to problem (\ref{1.11}) for $n=6$ if and only if $(\Lambda,\eta,\bar{Q})$ is a critical point of $I_{\varepsilon}.$
\end{proposition}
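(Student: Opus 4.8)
The plan is to invoke the variational structure that underlies the whole Lyapunov--Schmidt scheme. Recall that, by Proposition \ref{pr4.1}, the function $\phi=\phi_{\e,\Lambda,\bar Q}$ (resp.\ $\phi_{\e,\Lambda,\eta,\bar Q}$) solves the first equation of (\ref{4.6}) (resp.\ (\ref{4.7})) together with the orthogonality relations $\l Z_i,\phi\r=0$, so that
$$S_\e[W+\phi]=\sum_{i=0}^{4}c_iZ_i\quad(n=4),\qquad S_\e[W+\phi]=\sum_{i=0}^{7}d_iZ_i\quad(n=6).$$
Since the maximum principle guarantees (as noted after (\ref{4.1})) that any $u$ with $S_\e[u]=0$ is automatically positive and satisfies the Neumann condition, the function $u=W+\phi$ solves (\ref{1.11}) \emph{if and only if} all the multipliers $c_i$ (resp.\ $d_i$) vanish. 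Hence the proposition will follow once I show that the vanishing of all the multipliers is equivalent to $(\Lambda,\bar Q)$ (resp.\ $(\Lambda,\eta,\bar Q)$) being a critical point of $I_\e$.

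The key computation is that of $\partial_\lambda I_\e$ for $\lambda$ ranging over the parameters $\Lambda,\bar Q_1,\dots$ (and $\eta$ when $n=6$). Because $J_\e'[v]=S_\e[v]$ in the weak sense, with the homogeneous Neumann condition built into the weak formulation, the chain rule gives
$$\partial_\lambda I_\e=\l S_\e[W+\phi],\,\partial_\lambda(W+\phi)\r=\sum_i c_i\,\l Z_i,\,Y_\lambda+\partial_\lambda\phi\r,$$
where $Y_\lambda=\partial_\lambda W$. Differentiating the identities $\l Z_i,\phi\r\equiv 0$ in $\lambda$ yields $\l Z_i,\partial_\lambda\phi\r=-\l\partial_\lambda Z_i,\phi\r$, which is of lower order by the estimates of Sections 2--3 and by Proposition \ref{pr4.1}. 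Therefore $\partial_\lambda I_\e=\sum_i c_i\,m_{i\lambda}$ with $m_{i\lambda}=\l Z_i,Y_\lambda\r-\l\partial_\lambda Z_i,\phi\r$; the matrix $(m_{i\lambda})$ is a small perturbation of the Gram matrix $(\l Z_i,Y_j\r)$, which by (\ref{3.16})--(\ref{3.17}) (and its $n=6$ analogue) is quasi-diagonal with diagonal entries bounded away from zero after the natural normalization. Hence $(m_{i\lambda})$ is invertible with uniformly bounded inverse for $\e$ small.

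Granting this, the equivalence is immediate in both directions. If $(\Lambda,\bar Q)$ is a critical point of $I_\e$, then $\partial_\lambda I_\e=0$ for every $\lambda$, i.e.\ $(m_{i\lambda})$ applied to the vector of multipliers is zero; invertibility forces every $c_i$ (resp.\ $d_i$) to vanish, so $S_\e[W+\phi]=0$ and $u=W+\phi$ solves (\ref{1.11}). Conversely, if $u=W+\phi$ solves (\ref{1.11}), then $\sum_i c_iZ_i=S_\e[u]=0$; the $Z_i$ are linearly independent (again by the quasi-diagonal Gram matrix), so all $c_i$ vanish, hence $\partial_\lambda I_\e=0$ for every $\lambda$ and $(\Lambda,\bar Q)$ is a critical point of $I_\e$. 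The case $n=6$ is handled identically, with $(c_i)_{0\le i\le4}$ replaced by $(d_i)_{0\le i\le7}$ and the parameter $\eta$ added to the list.

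The only genuinely delicate point is the invertibility of $(m_{i\lambda})$ in dimension six: since $Y_7=\partial_\eta W=O(\e^3)$ by (\ref{2.19}) and $Z_7=O(\e^6)$ by (\ref{3.8}), the $\eta$-row and column of the Gram matrix live on a different scale from the rest, so one must rescale $Y_7$ and $Z_7$ (equivalently, work with the rescaled multiplier) and then verify that the full $8\times8$ system is diagonally dominant, checking that the corrections $\l\partial_\lambda Z_i,\phi\r$ are lower order than the rescaled diagonal entries by means of the bound (\ref{4.16}) in Proposition \ref{pr4.1}. Everything else is a routine application of the chain rule together with estimates already established.
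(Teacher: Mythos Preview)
Your argument is correct and follows the same route as the paper: you compute $\partial_\lambda I_\e$ via the chain rule as $\l S_\e[W+\phi],Y_\lambda+\partial_\lambda\phi\r=\sum_i c_i\,\l Z_i,Y_\lambda+\partial_\lambda\phi\r$, show that the resulting matrix is a small perturbation of the quasi-diagonal Gram matrix $(\l Z_i,Y_j\r)$ and hence invertible, and correctly flag the rescaling needed in the $\eta$-direction for $n=6$. The paper does exactly this, writing the perturbation as $a_{ij}=\l\partial_{\lambda_i}\phi,Z_j\r$ directly from Proposition~\ref{pr4.1} rather than via your identity $\l Z_i,\partial_\lambda\phi\r=-\l\partial_\lambda Z_i,\phi\r$, but the two are equivalent.
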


\begin{proof}
Here we only give the proof for the case $n=6,$ the other case can be proved in the same way. We notice that $u=W+\phi$ being a solution of (\ref{1.11}) is equivalent to being a critical point of $J_{\varepsilon}$, which is also equivalent to the vanish of the $d_i$'s in (\ref{4.7}) or, in view of
\begin{align}
\label{5.3}
&\langle Z_0,Y_0\rangle=\|Y_0\|_{\varepsilon}^2=\gamma_0+o(1),\nonumber\\
&\langle Z_i,Y_i\rangle=\|Y_i\|_{\varepsilon}^2=\gamma_1+o(1),~1\leq i\leq 6,\nonumber\\
&\langle Z_7,Y_7\rangle=\|Y_7\|_{\varepsilon}^2=\gamma_2\e^3,
\end{align}
where $\gamma_0,\gamma_1,\gamma_2$ are strictly positive constants, and
\begin{equation}
\label{5.4}
\langle Z_i,Y_j\rangle=o(1),i\neq j,0\leq i,j\leq6,\quad \l Z_i,Y_j\r=O(\e^3),i\neq j,i=7~\mathrm{or}~j=7.
\end{equation}
We have
\begin{align}
\label{5.5}
J_{\varepsilon}'[W+\phi][Y_i]=0,\quad0\leq i\leq 7.
\end{align}
On the other hand, we deduce from (\ref{5.2}) that $I'_{\varepsilon}(\Lambda,\eta,Q)=0$ is equivalent to the cancellation of $J_{\varepsilon}'(W+\phi)$ applied to the derivative of $W+\phi$ with respect to $\Lambda$, $\eta$ and $\bar{Q}.$ By the definition of $Y_i$'s and Proposition \ref{pr4.1}, we have
$$\frac{\partial(W+\phi)}{\partial\Lambda}=Y_0+y_0,\quad\frac{\partial(W+\phi)}{\partial\bar{Q}_i}=Y_i+y_i,\quad1\leq i\leq 6,\quad\frac{\partial (W+\phi)}{\partial\eta}=Y_7+y_7$$
with $\|y_i\|_{***}=O(\e^2),~0\leq i\leq 7$. We write
$$-\Delta(W+\phi)+\mu\e^2(W+\phi)-24(W+\phi)^2=\sum_{j=0}^7\alpha_jZ_j$$
and denote $a_{ij}=\l y_i,Z_j\r.$ Since $J_{\varepsilon}'[W+\phi][\theta]=0$ for $\langle\theta,Z_i\rangle=(\theta,Y_i)_{\varepsilon}=0,~0\leq i\leq 7,$ it turns out that $I_{\varepsilon}'(\Lambda,\eta,\bar{Q})=0$ is equivalent to
$$([b_{ij}]+[a_{ij}])[\alpha_j]=0,$$
where $b_{ij}=\l Y_i,Z_j\r.$ Using the estimate $\|y_i\|_{***}=O(\e^2)$ and the expression of $Z_i,Y_i,0\leq i\leq7,$ we directly obtain
\begin{align*}
&b_{00}=\gamma_0+o(1),\quad b_{ii}=\gamma_1+o(1)~\mathrm{for}~1\leq i\leq6,\quad b_{77}=\gamma_2\e^3,\\
&b_{ij}=o(1)~\mathrm{for}~0\leq i\neq j\leq6,\quad b_{ij}=O(\e^3)~\mathrm{for}~i=7~\mathrm{or}~j=7,i\neq j,\\
&a_{ij}=O(\e^2)~\mathrm{for}~0\leq i\leq7,0\leq j\leq 6,\quad a_{i7}=O(\e^4)~\mathrm{for}~0\leq i\leq7.
\end{align*}
Then it is easy to see the matrix $[b_{ij}+a_{ij}]$ is invertible by the above estimates of each components, hence $\alpha_i=0.$ We see that $I'_{\varepsilon}(\Lambda,\eta,\bar{Q})=0$ means exactly that (\ref{5.5}) is satisfied.
\end{proof}

With Proposition \ref{pr5.1}, it remains to find critical points of $I_{\varepsilon}.$ First, we establish an expansion of $I_{\varepsilon}.$
\begin{proposition}
\label{pr5.2}
In the case $n=4$, for $\varepsilon$ sufficiently small, we have
\begin{align}
\label{5.6}
I_{\e}(\Lambda,\eta,Q)=J_{\e}[W]+\e^2\big(\frac{c_1}{-\ln\e}\big)^{\frac12}\sigma_{\e,4}(\Lambda,Q)
\end{align}
where $\sigma_{\varepsilon,4}=O(\Lambda^2)+o(1)$ and $D_{\Lambda}(\sigma_{\varepsilon,4})=O(\Lambda)+o(1)$ as $\e$ goes to $0$, uniformly with respect to $\Lambda,~Q$ satisfying (\ref{2.11}).

In the case $n=6$, for $\e$ sufficiently small, we have
\begin{align}
\label{5.7}
I_{\e}(\Lambda,\eta,Q)=J_{\e}[W]+\e^4\sigma_{\e,6}(\Lambda,\eta,Q)
\end{align}
where $\sigma_{\varepsilon,6}=o(1)$ and $D_{\Lambda,\eta}(\sigma_{\varepsilon,6})=o(1)$ as $\e$ goes to $0$, uniformly with respect to $\Lambda,~\eta,~Q$ satisfying (\ref{2.13}).
\end{proposition}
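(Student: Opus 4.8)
\textbf{Proof proposal for Proposition \ref{pr5.2}.}
The plan is to exploit the variational characterization of $\phi=\phi_{\e}$ together with the estimates from Propositions \ref{pr4.1} and \ref{pr3.1}. The starting point is the Taylor expansion of $J_{\e}$ around $W$: writing $u=W+\phi$ and using that $J_{\e}'[W+\phi]$ annihilates the orthogonal complement of $\mathrm{span}\{Z_i\}$, one has
\begin{align}
\label{5.8a}
I_{\e}=J_{\e}[W+\phi]=J_{\e}[W]+J_{\e}'[W][\phi]+\frac12 J_{\e}''[W][\phi,\phi]+\text{(higher order in }\phi).
\end{align}
First I would bound the linear term: $J_{\e}'[W][\phi]=\langle S_{\e}[W],\phi\rangle=\langle R^{\e},\phi\rangle$, and by H\"older/weighted-norm estimates this is $O(\|R^{\e}\|_{**}\|\phi\|_{*})$ in $n=4$ (respectively $O(\|R^{\e}\|_{****}\|\phi\|_{***})$ in $n=6$). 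Plugging in \eqref{4.5} and \eqref{4.13} (resp. \eqref{4.15}) gives $O((\e\Lambda+\e^2(-\ln\e)^{1/2})\cdot\e\Lambda)$ in $n=4$, which is $o(\e^2(\tfrac{c_1}{-\ln\e})^{1/2})$ after accounting for the size of $\Lambda$ in the range \eqref{2.11}; in $n=6$ it is $O(\e^{2\frac23}\cdot\e^{\frac83})=o(\e^4)$.

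Next I would treat the quadratic term $\frac12 J_{\e}''[W][\phi,\phi]=\frac12\big(\|\phi\|_{\e}^2 - n(n+2)\int_{\O}W^{4/(n-2)}\phi^2\big)$. Using $\|\phi\|_{\e}^2\le C\|\phi\|_{*}\|R^{\e}+N_{\e}(\phi)\|_{**}$ (obtained by testing the equation \eqref{4.8} against $\phi$ and using the orthogonality $\langle Z_i,\phi\rangle=0$) together with Lemma \ref{le4.1} and the a priori bound on $\|\phi\|_{*}$, the quadratic term is controlled by the same quantities as above; likewise the $\int W^{4/(n-2)}\phi^2$ piece is dominated using \eqref{2.16} and the weighted norms. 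The cubic (and, in $n=4$, quartic) remainder $\int_{\O}\big[(W+\phi)^{2n/(n-2)}-W^{2n/(n-2)}-\tfrac{2n}{n-2}W^{(n+2)/(n-2)}\phi-\tfrac{n(n+1)}{n-2}W^{4/(n-2)}\phi^2\big]$ is estimated by $C\int(W^{(n+2)/(n-2)-2}|\phi|^3+|\phi|^{2n/(n-2)})$ and is of strictly smaller order by the same bookkeeping. Collecting all contributions, each is $o(1)$ relative to the prefactor $\e^2(\tfrac{c_1}{-\ln\e})^{1/2}$ (resp. $\e^4$), and the leading $\Lambda^2$-dependence in $n=4$ comes entirely from $J_{\e}[W]$ via \eqref{2.23}, so that $\sigma_{\e,4}=O(\Lambda^2)+o(1)$ as claimed; in $n=6$ the analogous split with \eqref{2.28} gives $\sigma_{\e,6}=o(1)$.

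For the derivative statements, I would differentiate \eqref{5.8a} in $\Lambda$ (and $\eta$, $\bar Q$ in $n=6$), using that $\phi$ is $C^1$ with $\|D\phi\|_{*}\le C\e$ (resp. $\|D\phi\|_{***}\le C\e^{5/3}$) from \eqref{4.14}/\eqref{4.16}, and that $\|D_{(\Lambda,\bar Q)}R^{\e}\|_{**}\le C\e$ (resp. $\|D R^{\e}\|_{****}\le C\e^2$) from \eqref{4.5}. Since $J_{\e}'[W+\phi]$ vanishes on directions orthogonal to $\mathrm{span}\{Z_i\}$ and the components of $D(W+\phi)$ along the $Z_i$'s are controlled by the reduction, differentiating produces only terms of the already-estimated types plus $D_{\Lambda}J_{\e}[W]$, which by \eqref{2.23} carries the $O(\Lambda)$ contribution; all other terms are $o(1)$ after dividing by the prefactor. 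The main obstacle I anticipate is purely one of careful bookkeeping in $n=4$: because the relation between $\mu$ and $\e$ is only implicit and $\Lambda$ ranges over an $\e$-dependent interval \eqref{2.11}, one must track powers of $(-\ln\e)$ and of $\Lambda$ through every term to be sure each genuine error is $o(1)$ relative to $\e^2(\tfrac{c_1}{-\ln\e})^{1/2}$ and does not secretly contribute to the $O(\Lambda^2)$ part; the $n=6$ case is comparatively clean since all scalings are honest powers of $\e$.
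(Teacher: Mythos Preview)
Your overall strategy---Taylor expand $J_\varepsilon[W+\phi]$ around $W$, use the equation \eqref{4.8} for $\phi$ together with the orthogonality $\langle Z_i,\phi\rangle=0$ to control the resulting pieces---is correct and is essentially what the paper does. The paper organizes it slightly differently: exploiting $J_\varepsilon'[W+\phi][\phi]=0$ and integration by parts in $t$, it writes directly
\[
I_\varepsilon-J_\varepsilon[W]=-\int_0^1 t\,J_\varepsilon''[W+t\phi][\phi,\phi]\,dt,
\]
then substitutes the equation for $\phi$ to reduce to the same three pieces $\int R^\varepsilon\phi$, $\int N_\varepsilon(\phi)\phi$, and $\int[W^2-(W+t\phi)^2]\phi^2$. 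This avoids writing the linear and quadratic terms separately but is equivalent to your decomposition.

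There is, however, a genuine confusion in your $n=4$ argument. By definition
\[
\sigma_{\varepsilon,4}=\big(I_\varepsilon-J_\varepsilon[W]\big)\Big/\Big[\varepsilon^2\big(\tfrac{c_1}{-\ln\varepsilon}\big)^{1/2}\Big],
\]
so it does not involve $J_\varepsilon[W]$ at all, and the $O(\Lambda^2)$ part cannot ``come entirely from $J_\varepsilon[W]$ via \eqref{2.23}''. In fact the error terms are \emph{not} all $o(1)$ relative to the prefactor: the dominant contribution is $\int_{\Omega_\varepsilon}R^\varepsilon\phi$, and if you integrate the pointwise bound \eqref{2.20} against $|\phi|\le C\varepsilon\Lambda\langle z-\bar Q\rangle^{-1}$, the last line of \eqref{2.20} (which itself carries an explicit factor $\Lambda$) produces a term of size $C\varepsilon^2\Lambda^2(-\ln\varepsilon)^{-1/2}$, i.e.\ exactly $O(\Lambda^2)$ times the prefactor. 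Since $\Lambda$ ranges up to $\varepsilon^{-\beta}$ in \eqref{2.11}, this is far from $o(1)$. Your bound $|\langle R^\varepsilon,\phi\rangle|\le C\|R^\varepsilon\|_{**}\|\phi\|_*$ is also not justified as stated (the $**$ and $*$ norms do not pair this way without picking up a logarithm); the paper instead integrates the pointwise estimates on $R^\varepsilon$ and $\phi$ directly, and you should do the same to see the $O(\Lambda^2)$ contribution emerge. The same care applies to $D_\Lambda\sigma_{\varepsilon,4}$. Your $n=6$ outline is fine.
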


\begin{proof}
We only consider the case $n=4$ here, the case $n=6$ can be argued similarly with minor changes. In view of (\ref{5.1}), a Taylor expansion and the fact that $J_{\varepsilon}'[W+\phi][\phi]=0$ yield
\begin{align*}
I_{\varepsilon}(\Lambda,Q)-J_{\varepsilon}[W]=&J_{\varepsilon}[W+\phi]-J_{\varepsilon}[W]=
-\int_0^1J_{\varepsilon}''(W+t\phi)[\phi,\phi](t)\mathrm{d}t\\
=&-\int_0^1(\int_{\Omega_{\varepsilon}}(|\nabla\phi|^2+\mu\e^2\phi^2-24(W+t\phi)^2\phi^2))t\mathrm{d}t,
\end{align*}
whence
\begin{align}
\label{5.9}
&I_{\varepsilon}(\Lambda,Q)-J_{\varepsilon}[W]\nonumber\\
&=-\int_0^1\Big{(}8\int_{\Omega_{\varepsilon}}(N_{\varepsilon}(\phi)\phi+3[W^2-(W+t\phi)^2]\phi^2)\Big{)}t\mathrm{d}t
-\int_{\Omega_{\varepsilon}}R^{\varepsilon}\phi.
\end{align}

The first term on the right hand side of (\ref{5.9}) can be estimated as
$$\Big|\int_{\Omega_{\varepsilon}}N_{\varepsilon}(\phi)\phi\Big|\leq C\int_{\Omega_{\varepsilon}}|\phi|^4+|W\phi^3|=O(\e^4\ln\e).$$
Similarly, for the second term on the right hand side of (\ref{5.9}), we obtain
$$\Big|\int_{\Omega_{\varepsilon}}[W^2-(W+t\phi)^2]\phi^2\Big|\leq C\int_{\Omega_{\varepsilon}}|\phi|^4+|W\phi^3|=O(\e^4\ln\e).$$
Concerning the last one, recalling
\begin{align*}
|R^{\e}|_*=|S_{\e}[W]|=&O\Big(\e^4(-\ln\e)\l z-\bar{Q}\r^{-2}+\e^2(-\ln\e)^{\frac12}\l z-\bar{Q}\r^{-4}\Big)\\
&+O(\Lambda)\Big(\frac{\e^4}{(-\ln\e)}|\ln\frac{1}{\e(1+|z-\bar{Q}|)}|+\frac{\e^4}{(-\ln\e)}\Big)
\end{align*}
uniformly in $\Omega_{\varepsilon}.$ A  simple computation shows that
$$\Big|\int_{\Omega_{\varepsilon}}R^{\varepsilon}\phi\Big|
=O\Big(\e^2(-\ln\e)^{\frac12}\Lambda^2+\e^3(-\ln\e)^{\frac12}\Big),$$
where we used $\|\phi\|_*=O(\e\Lambda).$ This concludes the proof of the first part of Proposition (\ref{5.6}).

An estimate for the derivatives with respect to $\Lambda$ is established exactly in the same way, differentiating the right side in (\ref{5.9}) and
estimating each term separately, using (\ref{4.3}), (\ref{4.5}) and Lemma \ref{le2.1}.
\end{proof}
\vspace{1cm}

\section{Proof of Theorem \ref{th1.6}}
In this section, we prove the existence of a critical point of $I_{\e}(\Lambda,Q)$ and $I_{\e}(\Lambda,\eta,Q)$, and then prove Theorem \ref{th1.6} by Proposition \ref{pr5.1}. According to Proposition \ref{pr5.2} and Lemma \ref{le2.1}. Setting
\begin{align}
\label{6.1}
K_{\e}(\Lambda,Q)=\frac{I_{\e}(\Lambda,Q)-2\int_{\B}U^4}{(-\frac{\ln\e}{c_1})^{\frac12}\e^2}
\end{align}
and
\begin{align}
\label{6.2}
K_{\e}(\Lambda,\eta,Q)=\frac{I_{\e}(\Lambda,\eta,Q)-4\int_{\B}U^3}{\e^3}
\end{align}
Then, we have when $n=4,$
\begin{align}
\label{6.3}
K_{\e}(\Lambda,Q)=&\frac {1}{4}c_4\Lambda^2\ln \frac {1}{\Lambda \e}(\frac {c_1}{-\ln \e})-\frac {c_4^2\Lambda^2}{2|\Omega|}+\frac 12c_4^2\Lambda^2H(Q,Q)(\frac {c_1}{-\ln \e})^{\frac{1}{2}}\nonumber\\
&+O\big(\frac{\Lambda^2}{-\ln\e}+\e\big),
\end{align}
and when $n=6,$
\begin{align}
\label{6.4}
K_{\e}(\Lambda,\eta,Q)=&\Big(\frac12\eta^2|\Omega|-c_6\Lambda^2\eta+\frac{1}{48}c_6\Lambda^2-8\eta^3|\Omega|\Big)
+\frac12c_6^2\Lambda^4H(Q,Q)\e\nonumber\\&
+\frac12\big(\eta-\frac{c_6\Lambda^2}{|\Omega|}\big)\e\int_{\Omega}\frac{\Lambda^2}{|x-Q|^4}+o(\e).
\end{align}
%\begin{align}
%\label{6.2}
%K_{\e}(\Lambda,Q)=\left\{
%\begin{array}{ll}
%c_4\frac {\Lambda^2}{4}\ln \frac {1}{\Lambda \e}(\frac {c_1}{-\ln \e})-\frac {c_4^2\Lambda^2}{2|\Omega|}+\frac 12c_4^2\Lambda^2\e^2H(P,P)(\frac {c_1}{-\ln \e})^{\frac{1}{2}}+O((\frac {c_1}{-\ln \e})^{\frac{1}{2}}), ~&n=4,\\
%4\eta^3|\Omega|-\frac{1}{2}c_6\eta\Lambda^2+\frac {1}{48}c_6\Lambda^2+(24\e\eta^2-\frac{1}{2}\e\eta\Lambda^2)\int_{\Omega}\frac {1}{|x-Q|^4}{\rm d}x+\frac 12c_6^2\Lambda^4\e H(Q,Q)+O(\e), &n=6.
%\end{array}\right.
%\end{align}

Then we begin to consider $K_{\e}(\Lambda,Q)$, finding its critical points with respect to $\Lambda, Q,$ and $K_{\e}(\Lambda,\eta,Q)$ with its critical points with respect to the parameters $\Lambda,\eta,Q$.

First, we consider $K_{\e}(\Lambda,Q)$ for $n=4$. For the setting of the parameters $\Lambda,Q,$ we see that $\Lambda,Q$ are located on a compact set, we can obtain a maximal value of $K_{\e}(\Lambda,Q).$  We claim that:
\medskip

\noindent {\bf Claim:} The maximal point of $K_{\e}(\Lambda,Q)$ with respect to $\Lambda,Q$ can not happen on the boundary of the parameters.
\medskip

If we can prove this claim, then we could obtain an interior critical point of $K_{\e}(\Lambda,Q).$ Before proving the claim, we first consider
$$F_{\e}(\Lambda)=\frac {1}{4}c_4\Lambda^2\ln \frac {1}{\Lambda \e}(\frac {c_1}{-\ln \e})-\frac {c_4^2\Lambda^2}{2|\Omega|}.$$
Note that
$$\frac {\pt}{\pt \Lambda}[F_{\e}(\Lambda)]=\frac {1}{2}c_4\Lambda\ln \frac {1}{\Lambda \e}(\frac {c_1}{-\ln \e})-\frac{1}{4}c_4\Lambda(\frac {c_1}{-\ln \e})-\frac {c_4^2\Lambda}{|\Omega|},$$
Choosing $c_1=\frac {2c_4}{|\Omega|}$, we could obtain that there exists
$$\Lambda^*=\exp(-\frac12)\in (\exp(-\frac12)\e^{\beta},\exp(-\frac12)\e^{-\beta})$$
with some proper fixed constant $\beta\in(0,\frac13)$, such that
$$\frac {\pt}{\pt \Lambda}F_{\e}\mid_{\Lambda=\Lambda^*}=0.$$
It can be also found that such $\Lambda^*$ provides the maximal value of $F_{\e}(\Lambda)$ in $[\Lambda_{4,1},\Lambda_{4,2}],$ where $\Lambda_{4,1}=\exp(-\frac12)\e^{\beta},\Lambda_{4,2}=\exp(-\frac12)\e^{-\beta}.$ In order to prove the claim, we need to take $\Lambda$ into consideration for the expansion of the energy, going through the first part of the Appendix, we have
\begin{align*}
K_{\e}(\Lambda,Q)=~&\frac {1}{4}c_4\Lambda^2\ln \frac {1}{\Lambda \e}(\frac {c_1}{-\ln \e})-\frac {c_4^2\Lambda^2}{2|\Omega|}+\frac 12c_4^2\Lambda^2H(Q,Q)(\frac {c_1}{-\ln \e})^{\frac{1}{2}}\nonumber\\
&+O(\frac{\Lambda^2}{-\ln\e}+\e).
\end{align*}

Now, we come back to prove the claim, choosing $\Lambda=\Lambda^*$ and $Q=p.$ (Here $p$ refers to the point where $H(Q,Q)$ obtain its maximal value, it is possible to find such a point. Indeed, we notice a fact $H(Q,Q)\rightarrow-\infty$ as $d(Q,\partial\Omega)\rightarrow0$ see \cite{rw} and references therein for a proof of this fact. Therefore we could find such $p$.)

First, we prove that the maximal value can not happen on $\partial \mathcal{M}_{\delta_4}.$  We choose $\delta_4$ such that $\omega_1<\max_{\partial\mathcal{M}_{\delta_4}}H<\omega_2$ for some proper constant $\omega_1,\omega_2$ sufficiently negative, then we fixed $\mathcal{M}_{\delta_4}.$ It is easy to see that $K_{\e}(\Lambda,Q)<K_{\e}(\Lambda,p),$ where $Q$ lies on the boundary of $\mathcal{M}_{\delta_4}$ and $\Lambda\in(\Lambda_{4,1},\Lambda_{4,2}).$ For $\Lambda=\Lambda_{4,1}$ or $\Lambda_{4,2},$ we go to the arguments below. Therefore, we prove that the maximal point can not lie on the boundary of $\mathcal{M}_{\delta_4}\times[\Lambda_{4,1},\Lambda_{4,2}].$

Next, we show $K_{\e}(\Lambda^*,p)>K_{\e}(\Lambda_{4,2},Q).$ It is easy to see that
$$F_{\e}[\Lambda_{4,2}]\leq c\e^{-2\beta},$$
where $c<0.$ Then we can find $c_1<0$ such that $K_{\e}(\Lambda_{4,2},Q)\leq c_1\e^{-2\beta}$ for any $Q\in\mathcal{M}_{\delta_4}$, since the other terms compared to $\e^{-2\beta}$ are higher order term. On the other hand, for the choice of $\Lambda^*,p,$ we see that $K_{\e}(\Lambda^*,p)=O(1).$ Therefore, we prove that $K_{\e}(\Lambda^*,p)>K_{\e}(\Lambda_{4,2},Q)$ for any $Q\in\mathcal{M}_{\delta_4}$.

It remains to prove that the maximal value can not happen at $\Lambda=\Lambda_{4,1}$. We choose $\Lambda=\e^{\beta/2},Q=p,$ direct computation yields.
\begin{align*}
K_{\e}(\e^{\beta/2},p)=\frac{\beta c_4^2\e^{\beta}}{4|\Omega|}(1+o(1)),\quad
K_{\e}(\Lambda_{4,1},Q)=\frac{\beta c_4^2\e^{2\beta}}{2|\Omega|}(1+o(1)).
\end{align*}
It is to see $K_{\e}(\e^{\beta/2},p)>K_{\e}(\Lambda_{4,1},Q)$ for any $Q\in\mathcal{M}_{\delta_4}$ when $\e$ is sufficiently small. Hence, we finish the proof of the claim. In other words, we could obtain an interior maximal point in $[\Lambda_{4,1},\Lambda_{4,2}]\times\mathcal{M}_{\delta_4}.$ Therefore, we show the existence of the critical points of $K_{\e}(\Lambda,Q)$ with respect to $\Lambda,Q.$

\medskip

For $n=6$.  We set $\eta=\frac{1}{48}+a\e^{\frac13},$ $\frac{c_6\Lambda^2}{|\Omega|}=\frac{1}{96}+b\e^{\frac23},$ then
\begin{align}
\label{6.5}
K_{\e}(a,b,Q):=K_{\e}(\Lambda,\eta,Q)=\frac{1}{6912}|\Omega|+\big{[}F(Q)-(8a^3+ab)|\Omega|\big{]}\e+o(\e),
\end{align}
where
\begin{align*}
F(x)=\frac{|\Omega|}{18432}\Big(|\Omega|H(x,x)+\frac{1}{c_6}\int_{\Omega}\frac{1}{|x-y|^4}\mathrm{d}y\Big),
\end{align*}
$-\eta_6\leq a\leq\eta_6$ and $-\Lambda_6\leq b\leq\Lambda_6.$

We set $C_0=F(p_0),$ $p_0$ refers to the point where $F(x)$ obtains its maximal value. Indeed, we have $H(Q,Q)\rightarrow-\infty$ as $d(Q,\partial\Omega)\rightarrow0$ and $I(x)=\int_{\Omega}\frac{1}{|x-y|^4}\mathrm{d}y$ is uniformly bounded in $\Omega.$ Hence, we can always find such point $p_0.$ Let us introduce another five constants $C_i,i=1,2,3,4,5,$ with $C_2<C_1<C_0$, $0<C_3<C_4<\eta_6$ and $0<C_3<C_5<\Lambda_6,$ the value of these five constants will be determined later.

We set
\begin{equation}
\label{6.6}
\Sigma_0=\Big{\{}-C_4\leq a\leq C_4,~-C_5\leq b\leq C_5,~Q\in \mathcal{N}_{C_2}\Big{\}},
\end{equation}
where $\mathcal{N}_{C_i}=\{q:F(q)>C_i\},i=1,2$ and $\delta_6$ is chosen such that $\mathcal{N}_{C_2}\subset\mathcal{M}_{\delta_6}.$

We also define
\begin{equation}
\label{6.7}
B=\{(a,b,Q)\mid(a,b)\in B_{C_3}(0),~Q\in\overline{\mathcal{N}_{C_1}}\},~B_0=\{(a,b)\mid (a,b)\in B_{C_3}(0)\}\times\partial\mathcal{N}_{C_1},
\end{equation}
where $B_{r}(0):=\{0\leq a^2+b^2\leq r\}.$

It is trivial to see that $B_0\subset B\subset\Sigma_0$, $B$ is compact. Let $\Gamma$ be the class of continuous functions $\varphi:B\rightarrow\Sigma_0$ with the property that $\varphi(y)=y,y=(a,b,Q)$ for all $y\in B_0.$ Define the min-max value $c$ as
$$c=\min_{\varphi\in\Gamma}\max_{y\in B}K_{\e}(\varphi(y)).$$
We now show that $c$ defines a critical value. To this end, we just have to verify the following conditions

\begin{itemize}
  \item [(T1)] $\max_{y\in B_0}K_{\e}(\varphi(y))<c,~\forall\varphi\in\Gamma,$
  \item [(T2)] For all $y\in\pt \Sigma_0$ such that $K_{\e}(y)=c,$ there exists a vector $\tau_y$ tangent to $\partial\Sigma_0$ at $y$ such that
  $$\partial_{\tau_y}K_{\e}(y)\neq0.$$
\end{itemize}

Suppose (T1) and (T2) hold. Then standard deformation argument ensures that the min-max value $c$ is a (topologically nontrivial) critical value for $K_{\e}(\Lambda,\eta,Q)$ in $\Sigma_0.$ (Similar notion has been introduced in \cite{dfw} for degenerate critical points of mean curvature.)

To check (T1) and (T2), we define $\varphi(y)=\varphi(a,b,Q)=(\varphi_a,\varphi_b,\varphi_Q)$ where $(\varphi_a,\varphi_b)\in [-C_4,C_4]\times[-C_5,C_5]$ and $\varphi_Q\in\mathcal{N}_{C_2}.$

For any $\varphi\in\Gamma$ and $Q\in\mathcal{N}_{C_2},$ the map $Q\rightarrow\varphi_Q(a,b,Q)$ is a continuous function from $\mathcal{N}_{C_1}$ to $\mathcal{N}_{C_2}$ such that $\varphi_Q(a,b,Q)=Q$ for $Q\in\partial\mathcal{N}_{C_1}.$ Let $\mathcal{D}$ be the smallest ball which contain $\mathcal{N}_{C_1}$, we extend $\varphi_Q$ to a continuous function $\tilde{\varphi}_Q$ from $\mathcal{D}$ to $\mathcal{D}$ where $\tilde{\varphi}(Q)$ is defined as follows:
$$\tilde{\varphi}_Q(x)=\varphi(x),~x\in\mathcal{N}_{C_1},\quad\tilde{\varphi}_Q(x)=Id,~x\in\mathcal{D} \setminus\mathcal{N}_{C_1}.$$
Then we claim there exists $Q'\in\mathcal{D}$ such that $\tilde{\varphi}_{Q}(Q')=p_0.$ Otherwise $\frac{\tilde{\varphi}_{Q}-p_0}{|\tilde{\varphi}_{Q}-p_0|}$ provides a continuous map from $\mathcal{D}$ to $\mathbb{S}^5,$ which is impossible in algebraic topology. Hence, there exists $Q'\in\mathcal{D}$ such that $\tilde{\varphi}_{Q}(Q')=p_0.$ By the definition of $\tilde{\varphi},$ we can further conclude $Q'\in\mathcal{N}_{C_1}.$ Whence
\begin{align}
\label{6.8}
\max_{y\in B}K_{\e}(\varphi(y))\geq&~ K_{\e}(\varphi_a(a,b,Q'),\varphi_b(a,b,Q'),p_0)\nonumber\\
\geq&~\frac{1}{6912}|\Omega|+(C_0-C_6|\Omega|)\e+o(\e),
\end{align}
where $C_6=8C_4^3+C_4C_5$ which stands for the maximal value of $8a^3+ab$ in $[-C_4,C_4]\times[-C_5,C_5].$ As a consequence
\begin{equation}
\label{6.9}
c\geq\frac{1}{6912}|\Omega|+(C_0-C_6|\Omega|)\e+o(\e).
\end{equation}

For $(a,b,Q)\in B_0,$ we have $F(\varphi_{Q}(a,b,Q))=C_1.$ So we have
\begin{equation}
\label{6.10}
K_{\e}(a,b,Q)\leq\frac{1}{6912}|\Omega|+(C_1+C_7|\Omega|)\e+o(\e),
\end{equation}
where $C_7=\max_{(a,b)\in B_{C_3}(0)}8a^3+ab<8C_3^3+C_3^2.$

If we choose $C_0-C_1>8C_4^3+C_4C_5+8C_3^3+C_3^2>C_6+C_7.$ Then $\max_{y\in B_0}K_{\e}(\varphi(y))<c$ holds. So (T1) is verified.

To verify (T2), we observe that
$$\partial\Sigma_0=:\{a,b,Q\mid a=-C_4~\mathrm{or}~a=C_4~\mathrm{or}~b=-C_5~\mathrm{or}~b=C_5~\mathrm{or}~Q\in\partial\mathcal{N}_{C_2}\}.$$
Since $C_4,C_5$ are arbitrary, we choose $0<24C_4^2<C_5.$ Then on $a=-C_4$ or $a=C_4$, we choose $\tau_y=\frac{\pt}{\pt b}$, on $b=-C_5$ or $b=C_5$, we choose $\tau_y=\frac{\pt}{\pt a}.$ By our setting on $C_4,C_5$, we could show $\partial_{\tau_y}K_{\e}(y)\neq0.$ It only remains to consider the case $Q\in\partial\mathcal{N}_{C_2}.$ If $Q\in\partial\mathcal{N}_{C_2},$ then
\begin{align}
\label{6.11}
K_{\e}(a,b,Q)\leq\frac{1}{6912}|\Omega|+ (C_2+C_7|\Omega|)\e+o(\e),
\end{align}
which is obviously less than $c$ for $C_2<C_1.$ So (T2) is also verified.

In conclusion, we proved that for $\e$ sufficiently small, $c$ is a critical value, i.e., a critical point $(a,b,Q)\in\Sigma_0$ of $K_{\e}$ exists. Which means $K_{\e}$ indeed has critical points respect to $\Lambda,\eta,Q$ in (\ref{2.13}).
\vspace{0.5cm}

\noindent{\it Proof of Theorem \ref{th1.6}.} For $n=4,$ we proved that for $\e$ small enough, $I_{\e}$ has a critical point $(\Lambda^{\e},Q^{\e}).$ Let $u_{\e}=W_{\Lambda^{\e},\bar{Q}^{\e},\e}.$ Then $u_{\e}$ is a nontrivial solution to problem (\ref{1.12}) for $n=4$. The strong maximal principle shows $u_{\e}>0$ in $\O.$ Let $u_{\mu}=\e^{-1}u_{\e}(x/\e)$. By our construction, $u_{\mu}$ has all the properties stated in Theorem \ref{th1.6}.

For $n=6,$ we proved that for $\e$ small enough, $I_{\e}$ has a critical point $(\Lambda^{\e},\eta^{\e},Q^{\e}).$ Let $u_{\e}=W_{\Lambda^{\e},\eta^{\e},\bar{Q}^{\e},\e}.$ Then $u_{\e}$ is a nontrivial solution to problem (\ref{1.12}) for $n=6$. The strong maximal principle shows $u_{\e}>0$ in $\O.$ Let $u_{\mu}=\e^{-2}u_{\e}(x/\e)$. By our construction, $u_{\mu}$ has all the properties stated in Theorem \ref{th1.6}. $\square$
\vspace{1cm}

\section{Appendix A:~Proof of Lemma ~2.1}
We divide the proof into two parts. First, we study the case $n=4.$ From the definition of $W,$ (\ref{2.10}) and (\ref{2.15}), we know that
\begin{align*}
S_{\e}[W]=&-\Delta W+\mu\e^2W-8W^3\\
=&~8U^3+\e^4\big(\frac{c_1}{-\ln\e}\big)\hat{U}
-\e^2\big(\frac{c_1}{-\ln\e}\big)^{\frac12}\Delta(R_{\e,\Lambda,Q}\chi)-8W^3\\
=&~O\Big{(}\e^4(-\ln\e)\l z-\bar{Q}\r^{-2}+\e^2(-\ln\e)^{\frac12}\l z-\bar{Q}\r^{-4}\Big)\\
&~+O(\Lambda)\Big(\frac{\e^4}{(-\ln\e)}\big|\ln\frac{1}{\e(1+|z-\bar{Q}|)}\big|+\frac{\e^4}{(-\ln\e)^{\frac12}}\Big{)}.
\end{align*}
The estimates for $D_{\Lambda}S_{\e}[W]$ and $D_{\bar{Q}}S_{\e}[W]$ can be computed in the same way.

We now turn to the proof of the energy estimate (\ref{2.23}). From (\ref{2.15}) and (\ref{2.16}) we deduce that
\begin{align}
\label{7.1}
\int_{\O}|\nabla W|^2+\e^2\big(\frac{c_1}{-\ln\e}\big)^{\frac12}\int_{\O}W^2=
&8\int_{\O}U^3W+\e^4\big(\frac{c_1}{-\ln\e}\big)\int_{\O}\hat{U}W\nonumber\\
&-\e^2\big(\frac{c_1}{-\ln\e}\big)^{\frac12}\int_{\O}\Delta(R\chi)W.
\end{align}

Concerning the first term on the right hand side of (\ref{7.1}), we have
\begin{align}
\label{7.2}
\int_{\O}U^3W=\int_{\O}U^4+\e^2\big(\frac{c_1}{-\ln\e}\big)^{\frac12}\int_{\O}\hat{U}U^3
+\frac{c_4\Lambda}{|\Omega|}\e^2\big(\frac{c_1}{-\ln\e}\big)^{-\frac12}\int_{\O}U^3.
\end{align}
We note that
\begin{align*}
\int_{\O}U^4=\int_{\mathbb{R}^4}U_{1,0}^4+O(\e^4),\quad \int_{\O}U^3=\frac{c_4\Lambda}{8}+O(\e^2).
\end{align*}
Then, we get
\begin{align*}
\int_{\O}U^3W=&\int_{\mathbb{R}^4}U_{1,0}^4+\frac{c_4^2\Lambda^2}{8|\Omega|}\e^2\big(\frac{c_1}{-\ln\e}\big)^{-\frac12}
+\e^2\big(\frac{c_1}{-\ln\e}\big)^{\frac12}\int_{\O}\hat{U}U^3+O\big(\e^4\big(\frac{c_1}{-\ln\e}\big)^{-\frac12}\big),
\end{align*}
for the third term on the right hand side of the above equality, we have
\begin{align*}
\int_{\O}\hat{U}U^3=&-\int_{\O}\Psi U^3-c_4\Lambda\big(\frac{c_1}{-\ln\e}\big)^{-\frac12}\int_{\O}H(x,Q)U^3+\int_{\O}(R\chi)U^3\\
=&-\frac{c_4\Lambda^2}{16}\ln\frac{1}{\Lambda\e}-\frac{c_4^2\Lambda^2}{8}\big(\frac{c_1}{-\ln\e}\big)^{-\frac12}H(Q,Q)+O(\Lambda^2).
\end{align*}
Hence, we have
\begin{align}
\label{7.3}
\int_{\O}U^3W=&\int_{\mathbb{R}^4}U_{1,0}^4+\frac{c_4^2\Lambda^2}{8|\Omega|}\e^2\big(\frac{c_1}{-\ln\e}\big)^{-\frac12}-
\frac{c_4\Lambda^2}{16}\ln\frac{1}{\Lambda\e}\e^2\big(\frac{c_1}{-\ln\e}\big)^{\frac12}\nonumber\\
&-\frac{c_4^2\Lambda^2}{8}\e^2H(Q,Q)+O\big(\e^2\big(\frac{c_1}{-\ln\e}\big)^{\frac12}\Lambda^2
+\e^4\big(\frac{c_1}{-\ln\e}\big)^{-\frac12}\big).
\end{align}
For the second term on the right hand side of (\ref{7.1})
\begin{equation*}
\int_{\O}\hat{U}W=\int_{\O}\hat{U}U+\e^2\big(\frac{c_1}{-\ln\e}\big)^{\frac12}\int_{\O}\hat{U}^2
+\frac{c_4\Lambda}{|\Omega|}\e^2\big(\frac{c_1}{-\ln\e}\big)^{-\frac12}\int_{\O}\hat{U}.
\end{equation*}
By noting that
\begin{align*}
&\int_{\O}\hat{U}U=O\big(\e^{-2}\big(\frac{c_1}{-\ln\e}\big)^{-\frac12}\Lambda^2\big),
~\int_{\O}\hat{U}^2=O\big(\e^{-4}(-\ln\e)\Lambda^2\big),\\
&\int_{\O}\hat{U}=\e^{-4}(\frac{c_1}{-\ln\e})^{-\frac12}\int_{\Omega}\frac{\Lambda}{|x-Q|^2}+O(\e^{-4}\Lambda),
\end{align*}
where we used $\int_{\Omega}G(x,Q)=0.$ Then, we obtain
\begin{align}
\label{7.4}
\e^4(\frac{c_1}{-\ln\e})\int_{\O}\hat{U}W=\frac{c_4\Lambda^2}{|\Omega|}\e^2
\int_{\Omega}\frac{1}{|x-Q|^2}+O\big(\e^2\big(\frac{c_1}{-\ln\e}\big)^{\frac12}\Lambda^2\big).
\end{align}
For the last term on the right hand side of (\ref{7.1}),
\begin{align}
\label{7.5}
\int_{\O}\Delta(R\chi)W=&\e^2(\frac{c_1}{-\ln\e})^{-\frac12}\frac{c_4\Lambda}{|\Omega|}
\int_{\O}\Delta(R\chi)+O(\Lambda^2)\nonumber\\
=&\e^2(\frac{c_1}{-\ln\e})^{-\frac12}\frac{c_4\Lambda}{|\Omega|}\int_{\pt\O}\frac{\pt(R\chi)}{\pt\nu}+O(\Lambda^2)
\nonumber\\
=&\big(\frac{c_1}{-\ln\e}\big)^{-1}\frac{c_4\Lambda}{|\Omega|}
\int_{\pt\O}\frac{\pt(U-\e^2(\frac{c_1}{-\ln\e})^{\frac12}\Psi-c_4\Lambda\e^2H)}{\pt\nu}+O(\Lambda^2)\nonumber\\
=&\big(\frac{c_1}{-\ln\e}\big)^{-1}\frac{c_4\Lambda}{|\Omega|}
\int_{\O}\Delta\big(U-\e^2\big(\frac{c_1}{-\ln\e}\big)^{\frac12}\Psi-c_4\Lambda\e^2H\big)+O(\Lambda^2)\nonumber\\
=&\big(\frac{c_1}{-\ln\e}\big)^{-1}\frac{c_4\Lambda}{|\Omega|}
\int_{\O}\big(-8U^3+\e^2\big(\frac{c_1}{-\ln\e}\big)^{\frac12}U+c_4\Lambda\e^4\frac{1}{|\Omega|}\big)
+O(\Lambda^2)\nonumber\\
=&\big(\frac{c_1}{-\ln\e}\big)^{-\frac12}\frac{c_4\Lambda^2}{|\Omega|}\int_{\Omega}\frac{1}{(\e^2\Lambda^2+|x-Q|^2)}
+O(\Lambda^2+\e^2(-\ln\e)).
\end{align}
(\ref{7.3})-(\ref{7.5}) implies
\begin{align}
\label{7.6}
\frac{1}{2}\int_{\O}\Big{(}|\nabla W|^2+\e^2(\frac{c_1}{-\ln\e})^{\frac12}W^2\Big{)}=&4\int_{\mathbb{R}^4}U_{1,0}^4
+\e^2(\frac{c_1}{-\ln\e})^{-\frac12}\frac{c_4^2\Lambda^2}{2|\Omega|}-\frac{c_4^2\Lambda^2}{2}H(Q,Q)\e^2\nonumber\\
&-\frac{c_4\Lambda^2}{4}\e^2(\frac{c_1}{-\ln\e})^{\frac12}\ln\frac{1}{\Lambda\e}
+O(\e^2(\frac{c_1}{-\ln\e})^{\frac12}\Lambda^2)\nonumber\\
&+O\big(\e^4\big(\frac{c_1}{-\ln\e}\big)^{-\frac12}\big).
\end{align}

At last, we compute the term $\int_{\O}W^4.$
\begin{align}
\label{7.7}
\int_{\O}W^4=&\int_{\O}U^4+4\e^2\big(\frac{c_1}{-\ln\e}\big)^{\frac12}\int_{\O}U^3\hat{U}
+4\e^2\big(\frac{c_1}{-\ln\e}\big)^{-\frac12}\frac{c_4\Lambda}{|\Omega|}\int_{\O}U^3\nonumber\\
&+O\big(\e^4\big(\frac{c_1}{-\ln\e}\big)^{-2}\big)\nonumber\\
=&\int_{\mathbb{R}^4}U_{1,0}^4-\frac{c_4\Lambda^2}{4}\e^2\big(\frac{c_1}{-\ln\e}\big)^{\frac12}\ln\frac{1}{\Lambda\e}
-\frac{c_4^2\Lambda^2}{2}\e^2H(Q,Q)\nonumber\\
&+\frac{c_4^2\Lambda^2}{2|\Omega|}\e^2\big(\frac{c_1}{-\ln\e}\big)^{-\frac12}
+O\big(\e^2\big(\frac{c_1}{-\ln\e}\big)^{\frac12}\Lambda^2\big)\nonumber\\
&+O\big(\e^4\big(\frac{c_1}{-\ln\e}\big)^{-2}\big).
\end{align}

Combining (\ref{7.6}) and (\ref{7.7}), we obtain
\begin{align}
\label{7.8}
J_{\e}[W]=&\frac{1}{2}\int_{\O}|\nabla W|^2+\frac{\mu\e^2}{2}\int_{\O}W^2-2\int_{\O}W^4\nonumber\\
=&2\int_{\mathbb{R}^4}U_{1,0}^4+\frac{c_4\Lambda^2}{4}\e^2\big(\frac{c_1}{-\ln\e}\big)^{\frac12}\ln\frac{1}{\Lambda\e}
-\frac{c_4^2\Lambda^2}{2|\Omega|}\e^2
\big(\frac{c_1}{-\ln\e}\big)^{-\frac12}\nonumber\\
&+\frac{1}{2}c_4^2\Lambda^2\e^2H(Q,Q)+O\big(\e^2\big(\frac{c_1}{-\ln\e}\big)^{\frac12}\Lambda^2\big)\nonumber\\
&+O(\e^4(-\ln\e)^2).
\end{align}
\medskip

In the end of this section, we prove (\ref{2.24})-(\ref{2.28}). From the definition of $W$, (\ref{2.10}) and (\ref{2.15}), we know that
\begin{align*}
S_{\e}[W]=&-\Delta W+\e^3W-24W^2 \\
=&~24U^2+\e^6\hat{U}-\e^3\Delta(R\chi)+\e^6\big(\eta-\frac{c_6\Lambda^2}{|\Omega|}\big)-24U^2-24\eta^2\e^6+O\big(\e^3\l z-\bar{Q}\r^{-4}\big)\\
=&~-\e^6\big(24\eta^2-\eta+\frac{c_6\Lambda^2}{|\Omega|}\big)+O\big(\e^3\l z-\bar{Q}\r^{-4}\big)\\
=&~ O\big(\l z-\bar{Q}\r^{-3\frac23}\e^3\big).
\end{align*}
The estimates for $D_{\Lambda}S_{\e}[W],~D_{\bar{Q}}S_{\e}[W]$ and $D_{\eta}S_{\e}[W]$ can be derived in the same way. Now we are in the position to compute the energy. From (\ref{2.15}) and (\ref{2.16}), we deduce that
\begin{align}
\label{7.9}
\int_{\O}|\nabla W|^2+\e^3\int_{\O}W^2=&\int_{\O}(-\Delta W+\e^3W)W\nonumber\\
=&\int_{\O}\Big{(}24U^2+\e^6\hat{U}-\e^3\Delta(R\chi)+\e^6\big(\eta-\frac{c_6\Lambda^2}{|\Omega|}\big)\Big{)}W.
\end{align}

Concerning the first term on the right hand side of (\ref{7.9}), we have
\begin{align}
\label{7.10}
\int_{\O}U^2W=&\int_{\O}U^3+\e^3\int_{\O}\hat{U}U^2+\eta\e^3\int_{\O}U^2\nonumber\\
=&\int_{\mathbb{R}^6}U_{1,0}^3+\frac{1}{24}c_6\eta\Lambda^2\e^3
-\e^3\int_{\O}U^2\Psi-c_6\Lambda^2\e^4\int_{\O}U^2H\nonumber+O(\e^5)\\
=&\int_{\mathbb{R}^6}U_{1,0}^3+\frac{1}{24}c_6\eta\Lambda^2\e^3
-\frac{1}{24}c_6^2\Lambda^4\e^4H(Q,Q)-\frac{1}{576}c_6\Lambda^2\e^3+O(\e^5).
\end{align}

For the second, third and fourth term on the right hand side of (\ref{7.9}), following the similar steps as we did in case $n=4.$
\begin{align}
\label{7.11}
\e^6\int_{\O}\hat{U}W=\e^6\int_{\O}\hat{U}(U+\e^3\hat{U}+\eta\e^3)=
-\eta\Lambda^2\e^4\int_{\Omega}\frac{1}{|x-Q|^4}+O(\e^5),
\end{align}
\begin{align}
\label{7.12}
-\e^3\int_{\O}\Delta(R\chi)W=~&\e^3\eta\int_{\O}\Delta(U-\e^3\Psi-c_6\e^4\Lambda^2H)+O(\e^5)=
\e^6\eta\int_{\Omega_{\e}}U+O(\e^5)\nonumber\\
=~&\eta\Lambda^2\e^4\int_{\Omega}\frac{1}{|x-Q|^4}+O(\e^5),
\end{align}
and
\begin{align}
\label{7.13}
\e^6\big(\eta-\frac{c_6\Lambda^2}{|\Omega|}\big)\int_{\O}W=\big(\eta^2|\Omega|-c_6\eta\Lambda^2\big)\e^3+
\big(\eta-\frac{c_6\Lambda^2}{|\Omega|}\big)\e^4\int_{\Omega}\frac{\Lambda^2}{|x-Q|^4}+O(\e^5).
\end{align}

(\ref{7.10})-(\ref{7.13}) implies
\begin{align}
\label{7.14}
\frac12\int_{\O}|\nabla W|^2+\frac{\e^3}{2}\int_{\O}W^2=&12\int_{\mathbb{R}^6}U_{1,0}^3
+\big(\frac12\eta^2|\Omega|-\frac{1}{48}c_6\Lambda^2\big)\e^3-\frac{c_6^2\Lambda^4}{2}H(Q,Q)\e^4\nonumber\\
&+\frac12(\eta-\frac{c_6\Lambda^2}{|\Omega|})\e^4\int_{\Omega}\frac{\Lambda^2}{|x-Q|^4}+O(\e^5).
\end{align}

Then,
\begin{align}
\label{7.15}
\int_{\O}W^3=
&\int_{\mathbb{R}^6}U_{1,0}^3+3\e^3\int_{\O}U^2\hat{U}+3\e^3\int_{\O}U^2\eta
+3\e^6\int_{\O}U\eta^2+3\e^9\int_{\O}\hat{U}\eta^2\nonumber\\
&+\e^9\int_{\O}\eta^3+O(\e^5)\nonumber\\
=&\int_{\mathbb{R}^6}U_{1,0}^3+\frac18c_6\eta\Lambda^2\e^3-\frac{1}{192}c_6\Lambda^2\e^3+\eta^3|\Omega|\e^3
-\frac18c_6^2\Lambda^4H(Q,Q)\e^4\nonumber\\
&+O(\e^5).
\end{align}

Combining (\ref{7.14})-(\ref{7.15}), we gain the energy
\begin{align}
\label{7.16}
J_{\e}[W]=~&4\int_{\mathbb{R}^6}U_{1,0}^3+\Big{(}\frac12\eta^2|\Omega|-c_6\eta\Lambda^2+\frac{1}{48}c_6\Lambda^2
-8\eta^3|\Omega|\Big{)}\e^3+\frac12c_6^2\Lambda^4H(Q,Q)\e^4\nonumber\\
&+\frac12\big(\eta-\frac{c_6\Lambda^2}{|\Omega|}\big)\e^4
\int_{\Omega}\frac{\Lambda^2}{|x-Q|^4}+O(\e^5).
\end{align}

Hence, we finish the whole proof of Lemma \ref{le2.1}.         $\square$

\vspace{2cm}
\bigskip   \bibliographystyle{plain}

\end{document}